\theoremstyle{plain}
\newtheorem{theorem}{Theorem}
\newtheorem{lemma}[theorem]{Lemma}
\newtheorem{proposition}[theorem]{Proposition}
\newtheorem{conjecture}[theorem]{Conjecture}
\numberwithin{theorem}{section}
\numberwithin{equation}{theorem}
\theoremstyle{definition}
\newtheorem*{question*}{Question}
\def\supprimes#1{\ifcase\value{#1}
  \relax \or $'$ \or $''$ \or $'''$ \or $''''$
  \else out of range\fi}
\begin{document}

\title[Geck's Conjecture and GGGR in Bad Characteristic]
{Geck's Conjecture and the Generalized Gelfand-Graev Representations in Bad Characteristic}

\author{Junbin Dong}
\address{Institute of Mathematical Sciences, ShanghaiTech University, 393 Middle Huaxia Road, Pudong, Shanghai 201210, PR China.}
\email{dongjunbin1990@126.com}

\author{Gao Yang}
\address{Department of Mathematics, Harbin Engineering University, 145 Nantong Street, Harbin 150001, PR China.}
\email{yanggao\_670206@163.com}






\maketitle

\begin{abstract}

For a connected reductive algebraic group $G$ defined over a finite field $\mathbb F_q$, Kawanaka introduced the generalized Gelfand-Graev representations (GGGRs for short) of the finite group $G(\mathbb F_q)$ in the case where $q$ is a power of a good prime for $G$. This representation has been widely studied and used in various contexts. Recently, Geck proposed a conjecture, characterizing Lusztig's special unipotent classes in terms of weighted Dynkin diagrams. Based on this conjecture, he gave a guideline for extending the definition of GGGRs to the case where $q$ is a power of a bad prime for $G$. Here, we will give a proof of Geck's conjecture. Combined with Geck's pioneer work, our proof
verifies Geck's conjectural characterization of special unipotent classes, and completes his definition of GGGRs in bad characteristics.
\end{abstract}


\setcounter{section}{-1}
\section{Introduction} \label{s_intro}

\subsection{}
Let $p$ be a prime and $\mathbf k = \overline{\mathbb F}_p$ be an algebraic closure of the field with $p$ elements.
 Let $G$ be a connected reductive algebraic group over $\mathbf k$, which is defined over the finite field $\mathbb F_q \subseteq \mathbf k$, where $q$ is a power of $p$.
 Let $F: G \rightarrow G$ be the corresponding Frobenius map, and $G^F =\{ g \in G \mid F(g) = g \}$.
 Fix an $F$-stable maximal torus $T$ and an $F$-stable Borel subgroup $B$ containing $T$. Let $\Phi$ be the root system of $G$ with respect to $T$, and $\Pi \subseteq \Phi$ be the set of simple roots determined by $B$. Let $\Phi^+$ (resp. $\Phi^-$) be the set of positive (resp. negative) roots with respect to $\Pi$.
  For each $\alpha \in \Phi$, there is an homomorphism $x_{\alpha} : \mathbf k^+ \rightarrow G$, $u \mapsto x_{\alpha}(u)$, which is an isomorphism onto its image, and satisfies $tx_{\alpha}(u)t^{-1} = x_{\alpha}(\alpha(t)u)$ for all $t \in T$ and $u \in \mathbf k$.
  Set $U_{\alpha} = \{ x_{\alpha}(u) \mid u \in \mathbf k^+\}$. Then $G = \langle T, U_{\alpha} ( \alpha \in \Phi) \rangle$.

 Let $\mathfrak g$ be the Lie algebra of $G$. Let $d_0x_{\alpha}: \mathfrak g \rightarrow \mathfrak g$ be the differential of $x_{\alpha}$, and $\mathfrak g_{\alpha} = d_0x_{\alpha}(\mathfrak g)$.
 Then there is a direct sum decomposition of $\mathfrak g$:
\begin{eqnarray}
\mathfrak g = \mathfrak t \oplus \bigoplus_{\alpha \in \Phi} \mathfrak g_{\alpha}, \nonumber
\end{eqnarray}
where $\mathfrak t=\text{Lie}(T)$ is the Cartan subalgebra of $\mathfrak g$.

\subsection{}
 When $p$ is a good prime for $G$, Kawanaka defined a representation $\Gamma_C$ of $G^F$, associated to each unipotent $G^F$-conjugacy class $C$ in $G^F$, see \cite{K1,K2,K3}. If $C$ consists of regular unipotent elements, then $\Gamma_C$ is a Gelfand-Graev representation, as defined in \cite{St}.
 Thus  $\Gamma_C$ is called a \emph{generalized Gelfand-Graev representation} (GGGR for short) for an arbitrary conjugacy class $C$.

 The original purpose for Kawanaka to define GGGRs was to prove Ennola's conjecture. Far beyond this proof, GGGRs have been applied to many other areas of representation theory of finite groups of Lie type, see a survey given by Geck in \cite[\S 1]{G}.

 When $p$ is a bad prime for $G$, one can not expect a good definitions of GGGRs for all unipotent elements in $G^F$ according to some easy examples. However it seems reasonable to restrict oneself to those unipotent classes which "come form characteristic $0$". In the paper \cite{G}, Geck proposed a guideline to a good definition of GGGRs which we will explain briefly in the following.

\subsection{}
Let $\Delta$ be the set of weighted Dynkin diagrams. Any Dynkin diagram $d \in \Delta$ is a map $d: \Phi \rightarrow \mathbb Z $ satisfying:

\noindent (a) $d(-\alpha)=-d(\alpha)$ for all $\alpha\in \Phi$ and $d(\alpha+\beta)=d(\alpha)+d(\beta)$ for all $\alpha,\beta\in \Phi$ such that $\alpha+\beta \in \Phi$.

\noindent (b) $d(\alpha)\in \{0,1,2\}$ for every simple root $\alpha\in \Pi$.

Let $d \in \Delta$.
For $i\in \mathbb Z$, we set $\Phi_{d,i}:=\{\alpha\in \Phi \mid d(\alpha)=i\}$ and define
\begin{eqnarray}
\mathfrak g_{d}(i): =
\begin{cases}
\bigoplus_{\alpha\in \Phi_{d,i}} \mathfrak g_{\alpha} & \text{ if } i\ne 0, \\

\mathfrak t \oplus  \bigoplus_{\alpha\in \Phi_{d,0}} \mathfrak g_{\alpha}    & \text{ if } i= 0.
\end{cases} \nonumber
\end{eqnarray}
Furthermore we define $U_{d,i}  = \langle U_{\alpha} \mid \alpha \in \Phi_{d,j} \text{ for all } j \geq i \rangle$. Then put $P_{d} = \langle T, U_{\alpha} \mid \alpha \in \Phi_{d,i} \text{ for all } i \geq 0 \rangle$, which is a parabolic subgroup of $G$.

\smallskip

The coadjoint action of $G$ on the dual vector space $\mathfrak g^*$ of $\mathfrak g$ is denoted as $g \cdot \xi$, and defined by $(g \cdot \xi)(y ) = \xi(\operatorname{Ad}(g^{-1})(y))$ for $g \in G$, $\xi \in \mathfrak g^*$ and $y \in \mathfrak g$.
Let $\dagger: \mathfrak g \rightarrow \mathfrak g$ be an opposition $\mathbb F_q$-automorphism, as defined in \cite{K0}. It will induce a map $\dagger: \mathfrak g^* \rightarrow \mathfrak g^*$, defined by $\xi^{\dagger}(y) = \xi(y^{\dagger})$ for $\xi \in \mathfrak g^*$ and $y \in \mathfrak g$. Let $G_{\xi} = \{ g \in G \mid g \cdot \xi = \xi \}$ for $\xi \in \mathfrak g^*$.

\smallskip
For $d \in \Delta$ and given a homomorphism  $\lambda \in \operatorname{Hom}_{\mathbf k }(\mathfrak g_{d}(2), \mathbf k )$, we obtain an alternating form $\sigma_{\lambda }: \mathfrak g_{d}(1)\times \mathfrak g_{d}(1) \rightarrow \mathbf k $.  Following \cite{G}, we call $\lambda \in \operatorname{Hom}_{\mathbf k}(\mathfrak g_d(2), \mathbf k)$ in \emph{sufficiently general position}, if the following conditions (K1) and (K2) hold.

\noindent (K1) $G_{\lambda^{\dagger}} \subseteq P_d$, where $\lambda$ is regarded as element of $\mathfrak g^*$ whose restriction on $\mathfrak g_d(i)$ is zero for all $i \neq 2$.

\noindent (K2) If $\mathfrak g_d(1) \neq \{0\}$, then the radical of $\sigma_{\lambda}: \mathfrak g_d(1) \times \mathfrak g_d(1) \rightarrow \mathbf k$ is zero.

%
%
%

For $d \in \Delta$, if there exists $\lambda \in \operatorname{Hom}_{\mathbf k}(\mathfrak g_d(2), \mathbf k)$ in sufficiently general position, then the GGGR $\Gamma_{d,\lambda}$ can be defined through the process described in \cite[\S 2.8]{G}.
Thus the existence of $\lambda$ in sufficiently general position becomes the key point in defining a GGGR.

By the arguments in \cite[Remarks 3.6-3.7]{G}, we have the following properties:

\noindent (A1) the set of $\lambda \in \operatorname{Hom}_{\mathbf k}(\mathfrak g_d(2), \mathbf k)$ satisfying (K1) is open dense in $\operatorname{Hom}_{\mathbf k}(\mathfrak g_d(2), \mathbf k)$;

\noindent  (A2) the set of $\lambda \in \operatorname{Hom}_{\mathbf k}(\mathfrak g_d(2), \mathbf k)$ satisfying (K2) is open in $\operatorname{Hom}_{\mathbf k}(\mathfrak g_d(2), \mathbf k)$.

\noindent Consequently,

\noindent  (A3) there exists $\lambda \in \operatorname{Hom}_{\mathbf k}(\mathfrak g_d(2), \mathbf k)$ in sufficiently general position if and only if there exists $\lambda \in \operatorname{Hom}_{\mathbf k}(\mathfrak g_d(2), \mathbf k)$ satisfying (K2).

On the existence of $\lambda \in \operatorname{Hom}_{\mathbf k}(\mathfrak g_d(2), \mathbf k)$ satisfying (K2), Geck proposed several conjectures which we will prove in this paper.

\subsection{}
Let $\mathfrak C$ be the set of unipotent classes coming from characteristic $0$, as defined in \cite[\S 3.1]{G}. Then $\mathfrak C$ can be identified with $\Delta$. Let $\mathfrak C_{\text{spec}}$ be the set of special unipotent orbits, and $\Delta_{\text{spec}}$ be the subset of $\Delta$ which is identified with $\mathfrak C_{\text{spec}}$.

For any positive integer $n$, there exists a canonical map
$$\Psi_n: \operatorname{Irr}(G^{F^n}) \rightarrow \{ F^n-\text{stable unipotent classes} \},$$
sending an irreducible representation of $G^{F^n}$ to its unipotent support, see \cite[\S 4.1]{G}. By \cite[Remark 3.9]{GM}, the image of this map is contained in $\mathfrak C$. So the map $\Psi_n$ induces a map
$$\widetilde{\Psi}_n: \operatorname{Irr}(G^{F^n}) \rightarrow \Delta.$$
Let $\Delta_{\mathbf k} =  \operatorname{\bigcup}\limits_{n \in \mathbb Z_+} \operatorname{Im} \widetilde{\Psi}_n$.

\setcounter{theorem}{4}
\begin{conjecture}\cite[Conjecture 4.9]{G} \label{c_4.9}
Let $d \in \Delta$. Then $d \in \Delta_{\mathbf k}$ if and only if either  $\mathfrak g_{d}(1)=\{0\}$, or there exists $\lambda \in \operatorname{Hom}_{\mathbf k}(\mathfrak g_d(2), \mathbf k)$ satisfying (K2).
\end{conjecture}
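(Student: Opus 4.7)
The plan is to prove both directions of Conjecture~\ref{c_4.9} by first identifying $\Delta_{\mathbf k}$ with the set $\Delta_{\text{spec}}$ of special weighted Dynkin diagrams (viewed as a subset of $\Delta$). One inclusion, $\Delta_{\mathbf k} \subseteq \Delta_{\text{spec}}$, is the content of Lusztig's theorem on unipotent supports, together with \cite[Remark 3.9]{GM} as already recalled in \S 0.4. For the reverse inclusion I would produce, for each special $d$, an irreducible character of some $G^{F^n}$ whose unipotent support is $C_d$, using Lusztig's classification of unipotent characters and the compatibility of the unipotent support map with Lusztig families. Once the equality $\Delta_{\mathbf k} = \Delta_{\text{spec}}$ is in hand, the conjecture reduces to the equivalence: $d \in \Delta_{\text{spec}}$ if and only if $\mathfrak g_d(1) = \{0\}$, or there exists $\lambda \in \operatorname{Hom}_{\mathbf k}(\mathfrak g_d(2), \mathbf k)$ satisfying (K2).

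For the forward direction I would construct, for each $d \in \Delta_{\text{spec}}$ with $\mathfrak g_d(1) \neq \{0\}$, an explicit $\lambda$ satisfying (K2). In good characteristic this is standard: using a non-degenerate invariant form on $\mathfrak g$ to identify $\mathfrak g_d(2)^*$ with $\mathfrak g_d(-2)$, take $\lambda$ corresponding to a nilpotent representative $e$ of $C_d$ embedded in a Jacobson-Morozov $\mathfrak{sl}_2$-triple; non-degeneracy of $\sigma_\lambda$ on $\mathfrak g_d(1)$ then reduces to Kostant's theorem on the bijectivity of $\operatorname{ad}(e) \colon \mathfrak g_d(1) \to \mathfrak g_d(-1)$. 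In bad characteristic the Jacobson-Morozov theorem can fail, so I would argue by type: for $B_n$, $C_n$, $D_n$ in characteristic $2$, use the partition parametrization of nilpotent orbits together with the explicit structure of the associated $\mathbb Z$-gradings (in the spirit of Spaltenstein and Xue) to exhibit a $\lambda$ whose alternating form is manifestly symplectic on $\mathfrak g_d(1)$; for the exceptional types in their bad primes, a finite verification using Bala-Carter representatives and tables of structure constants suffices.

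For the reverse direction, the case $\mathfrak g_d(1) = \{0\}$ is the ``even'' situation in which the construction of \cite[\S 2.8]{G} reduces to inducing a linear character from the unipotent radical of $P_d$, yielding a well-defined representation whose unipotent support is $C_d$. When instead $\lambda$ satisfying (K2) exists, $\Gamma_{d,\lambda}$ as defined in \cite[\S 2.8]{G} is well-defined, and an application of the unipotent-support machinery along the lines of \cite[\S 4]{G} computes its unipotent support to be $C_d$, placing $d \in \operatorname{Im}\widetilde{\Psi}_n$ for suitable $n$.

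The principal obstacle is the forward direction in bad characteristic: without $\mathfrak{sl}_2$-theory, the non-degeneracy of $\sigma_\lambda$ must be checked by direct construction for every special orbit. The combinatorics of characteristic-$2$ nilpotent orbits in classical types, where the parameterizing partitions interact with the defining bilinear forms in subtle ways, is expected to be the most delicate part, while the exceptional types should yield to a finite though sizable case check.
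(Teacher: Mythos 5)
Your opening reduction is already off: the identification $\Delta_{\mathbf k}=\Delta_{\text{spec}}$ is false in general. When $p$ is good one has $\Delta_{\mathbf k}=\Delta$, which strictly contains $\Delta_{\text{spec}}$ for all classical types of rank $\geq 2$, and \cite[Remark 3.9]{GM} only gives $\operatorname{Im}\widetilde{\Psi}_n\subseteq\Delta$ (classes coming from characteristic $0$), not containment in the special classes. The equality $\Delta_{\mathbf k}=\Delta_{\text{spec}}$ is available precisely for classical groups in characteristic $2$, where it is \cite[Proposition 4.3]{G} (resting on Spaltenstein), and the paper uses it only after first disposing of good characteristic (where the conjecture is already known) and the exceptional types (covered by \cite[Corollary 5.11]{G}). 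Your plan needs this case split before the reduction to $\Delta_{\text{spec}}$ makes sense.

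The more serious gap is your reverse direction. You propose to show that if $\lambda$ satisfying (K2) exists then $d\in\Delta_{\mathbf k}$ by defining $\Gamma_{d,\lambda}$ and computing its unipotent support to be $C_d$. No such computation is available in bad characteristic: $\Gamma_{d,\lambda}$ is a (highly reducible) representation, $\Psi_n$ is defined on irreducible characters, and the statement that some constituent of the GGGR has unipotent support $C_d$ is exactly the Kawanaka conjecture (\cite[Theorem 4.5]{GH}, \cite{T}), which is proved only in good characteristic; the paper's final remarks explicitly note that no analogue exists yet when $p$ is bad, since the GGGRs themselves are only being defined here. The paper sidesteps this entirely by proving the contrapositive: for $d\notin\Delta_{\text{spec}}$ and $\operatorname{char}\mathbf k=2$, \emph{every} $\lambda$ has $\det\mathscr G_{d,\lambda}=0$ (Theorem \ref{mainThm1}(2)), established via the reduction to faithful maps and the degenerate-matrix criteria of Proposition \ref{p_det_D}. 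This degeneracy statement for non-special diagrams is roughly half the content of the paper and is absent from your plan; without it (or a genuine substitute for the unipotent-support computation) the reverse direction does not close. Your forward direction is in the right spirit --- the paper likewise builds explicit $\lambda$, in fact over $\mathbb Z$ with unimodular Gram matrix so that one construction serves all characteristics --- but as you acknowledge, the construction itself is the work, and your sketch does not yet contain it.
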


When $p$ is good for $G$, we have $\Delta =\Delta_{\mathbf k}$, and there always exists $\lambda \in \operatorname{Hom}_{\mathbf k}(\mathfrak g_d(2), \mathbf k)$ in sufficiently general position (see \cite[Remarks 3.5, 4.2(b)]{G}). Thus Conjecture \ref{c_4.9} holds. By \cite[Corollary 5.11]{G}, Conjecture \ref{c_4.9} holds for $G$ of exceptional type $G_2$, $F_4$, $E_6$, $E_7$ or $E_8$.
Now we are going to prove Conjecture \ref{c_4.9} for $G$ of classical type $B_n$, $C_n$ or $D_n$ and $p$ being bad for $G$.

\setcounter{subsection}{5}
\subsection{}

Geck also formulated another conjecture \cite[Conjecture 4.10]{G}  to determine the the special weighted Dynkin diagram which can be regarded as an integral version of Conjecture \ref{c_4.9}.

For $d \in \Delta$, put
%
\begin{eqnarray}
 \mathfrak g_{\mathbb Z,d}(i) = \langle e_{\alpha} \mid d(\alpha) = i \rangle_{\mathbb Z}. \nonumber
\end{eqnarray}

\noindent Given a homomorphism  $\lambda \in \operatorname{Hom}_{\mathbb Z}(\mathfrak g_{\mathbb Z,d}(2), \mathbb Z)$, we obtain an alternating form $\sigma_{\lambda }: \mathfrak g_{\mathbb Z,d}(1)\times \mathfrak g_{\mathbb Z,d}(1) \rightarrow \mathbb Z$ and then we may consider its Gram matrix
$$\mathscr G_{d,\lambda } = (\lambda ([e_{\alpha},e_{\beta}]))_{\alpha,\beta \in \Phi_{d,1}}.$$
If this matrix $\mathscr G_{d,\lambda }$ has determinant $\pm 1$, then we say that $\sigma_{\lambda }$ is  \emph{non-degenerate}  over $\mathbb Z$. We often say $\lambda $ is non-degenerate if there is no confusion.

\setcounter{theorem}{6}
\begin{conjecture}\cite[Conjecture 4.10]{G} \label{c_4.10}
Let $d \in \Delta$. Then $d \in \Delta_{\text{spec}}$ if and only if either $\mathfrak g_{\mathbb Z,d}(1)=\{0\}$, or there exists a homomorphism $\lambda : \mathfrak g_{\mathbb Z,d}(2) \rightarrow \mathbb Z$ such that $\sigma_{\lambda }$ is non-degenerate over $\mathbb Z$.
\end{conjecture}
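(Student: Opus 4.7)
By \cite[Corollary 5.11]{G} the conjecture is known when $G$ is of exceptional type, so it remains to treat $G$ of classical type $B_n$, $C_n$, or $D_n$. Since Conjecture \ref{c_4.10} is a purely integral statement, the plan is to reduce it to an explicit Pfaffian calculation attached to the partition parametrising $d$. Under the Dynkin--Kostant correspondence, $\Delta$ is identified with the set of partitions of $2n+1$ (type $B_n$) or $2n$ (types $C_n$, $D_n$) subject to the usual parity constraints on part multiplicities, and under the same identification $\Delta_{\mathrm{spec}}$ becomes Lusztig's set of special partitions, which admits a concrete combinatorial description via the transpose partition or the Collingwood--McGovern block decomposition.

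In the standard coordinate realisation of the classical root systems, the Dynkin--Kostant recipe produces a non-increasing sequence of integer labels on the coordinates, from which the sets $\Phi_{d,1}$ and $\Phi_{d,2}$ are read off directly. Since $[e_\alpha, e_\beta] = N_{\alpha,\beta}\, e_{\alpha+\beta}$ with Chevalley structure constants $N_{\alpha,\beta} \in \{\pm 1, \pm 2\}$, the Gram matrix $\mathscr G_{d,\lambda}$ is skew-symmetric and sparse: its $(\alpha, \beta)$-entry equals $N_{\alpha,\beta}\, \lambda(e_{\alpha+\beta})$ whenever $\alpha+\beta \in \Phi_{d,2}$, and zero otherwise. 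As $\det \mathscr G_{d,\lambda}$ is the square of the Pfaffian $\operatorname{Pf}(\mathscr G_{d,\lambda})$, which is an integer polynomial in the variables $\lambda(e_\gamma)$, $\gamma \in \Phi_{d,2}$, non-degeneracy of $\sigma_\lambda$ over $\mathbb Z$ amounts to this Pfaffian attaining the value $\pm 1$ at some integer point.

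For the forward direction, given a special partition $\mu$ the plan is to construct an explicit $\lambda$ supported on root vectors $e_\gamma$ whose $\gamma$ come from adjacent positions in the block decomposition of $\mu$, and to verify by induction on the blocks that the Pfaffian of the resulting matrix equals $\pm 1$; specialness ensures that all contributing Chevalley constants can be arranged to be $\pm 1$ rather than $\pm 2$. For the converse, assume $\mu$ is non-special; one must show that $\operatorname{Pf}(\mathscr G_{d,\lambda})$ is divisible by $2$ for every integer $\lambda$, or equivalently that $\sigma_\lambda$ is always degenerate modulo $2$, by exhibiting a canonical nonzero vector in the radical of $\sigma_\lambda \bmod 2$ manufactured from the partition blocks witnessing non-specialness. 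The principal obstacle is this last step: it requires a uniform combinatorial argument that tracks the $\pm 2$ Chevalley constants arising in types $B_n$ and $C_n$, and aligns the construction of the defect vector with Lusztig's specialness criterion on a type-by-type basis.
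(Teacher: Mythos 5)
Your high-level strategy coincides with the paper's: for special $d$ construct an explicit integral $\lambda$ with $\det\mathscr G_{d,\lambda}=\pm 1$, and for non-special $d$ show that $\sigma_{\lambda}$ is degenerate modulo $2$ for every $\lambda$ (which forces $\det\mathscr G_{d,\lambda}\in 2\mathbb Z$, hence $\neq\pm 1$). But as written this is a plan, not a proof: both halves are left unexecuted, and you yourself flag the converse as "the principal obstacle." That obstacle is precisely where the paper's real content lies, and nothing in your proposal indicates how to produce the "canonical nonzero vector in the radical of $\sigma_{\lambda}\bmod 2$" uniformly in $\lambda$ — note that the radical vector must be exhibited for an \emph{arbitrary} $\lambda$, not just a conveniently chosen one, which is why a direct attack on the full Gram matrix is unmanageable.

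Concretely, the paper fills the gap with three ingredients you would need analogues of. First, a reduction (Section 1.4) to \emph{odd} weighted Dynkin diagrams, i.e.\ $d(\alpha)\in\{0,1\}$ on simple roots, by passing to a smaller root system. Second, the "faithful map" machinery of Section 4: explicit isomorphisms of weighted Lie algebras $\varrho_{s,t}^{\gamma}$ and $\varsigma_{s,t}$ that normalize an arbitrary $\lambda$ to a faithful one without changing $|\det\mathscr G_{d,\lambda}|$, after which the determinant factors as $\pm\bigl(\prod_{l}\det M_l\bigr)^2\det M_k$; this reduces the whole problem to a single block $M_k$. Third, Proposition \ref{p_det_D}, a standalone linear-algebra statement over a field of characteristic $2$ which, for block matrices $S$ of the shape that $M_k$ assumes exactly when $d$ violates the specialness conditions $(\spadesuit_B)$, $(\spadesuit_C)$, $(\spadesuit_D)$, produces a nonzero $Z$ with $S\,{}^tZ=0$. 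Your forward direction is also thinner than you suggest: the claim that "specialness ensures that all contributing Chevalley constants can be arranged to be $\pm 1$" is not how the paper proceeds — in type $B$ the constants $\pm 2$ from $[e_{\varepsilon_i},e_{\varepsilon_j}]=\pm 2e_{\varepsilon_i+\varepsilon_j}$ do appear in the chosen Gram matrix, and the construction instead arranges the $4\times 4$ diagonal blocks so that these entries never contribute to the Pfaffian. Without these reductions and the explicit block-by-block constructions of Sections 5--8, the proposal does not yet constitute a proof.
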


 As explained before, we only have to prove Conjecture \ref{c_4.9} under the assumption that $G$ is of classical type $B_n$, $C_n$ or $D_n$, and $p$ is bad for $G$. Under this assumption, according to \cite[Proposition 4.3]{G}, we have  $\Delta_{\mathbf k} = \Delta_{\text{spec}}$.
We formulate the following theorem (Theorem \ref{mainThm1}) which implies Conjecture  \ref{c_4.9} and Conjecture \ref{c_4.10}  for $G$ of classical type $A_n$, $B_n$, $C_n$ or $D_n$. Note that type $A_n$ for Conjecture \ref{c_4.9} is already valid, but it is of  independent interest for  Conjecture \ref{c_4.10}.





\begin{theorem} \label{mainThm1}
Assume that $G$ is of classical type $A_n$, $B_n$, $C_n$ or $D_n$. Let $d \in \Delta$.
\begin{enumerate}
\item If $d \in \Delta_{\text{spec}}$, then $\mathfrak g_{\mathbb Z,d}(1) =\{0\}$, or there exists a non-degenerate $\lambda \in \operatorname{Hom}_{\mathbb Z}( \mathfrak g_{\mathbb Z,d}(2),\mathbb Z)$.
\item If $d \not\in \Delta_{\text{spec}}$ and $\operatorname{char} \mathbf k =2$, then $\det \mathscr G_{d,\lambda} = 0$ for any $\lambda \in \operatorname{Hom}_{\mathbf k}(\mathfrak g_d(2), \mathbf k)$.
\end{enumerate}
\end{theorem}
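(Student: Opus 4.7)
The plan is a case-by-case analysis using the partition parametrization of weighted Dynkin diagrams in classical types. For $G$ of type $A_n$, $B_n$, $C_n$ or $D_n$, every $d \in \Delta$ is determined by a partition $\mu$ of the appropriate integer (satisfying the usual parity restrictions in types $B$, $C$, $D$) via a weight sequence $h = (h_1, \ldots, h_N)$ obtained by expanding each part $\mu_j$ into $\mu_j - 1, \mu_j - 3, \ldots, -(\mu_j - 1)$ and sorting. In these coordinates, $\Phi_{d,1}$ and $\Phi_{d,2}$ are indexed by those pairs $(i,j)$ for which the corresponding root $\epsilon_i \pm \epsilon_j$ (or $\pm \epsilon_i$ in type $B$) satisfies $d = 1$ or $d = 2$, and Lusztig's combinatorial characterization of specialness becomes a concrete condition on $\mu$ (equivalently on $h$). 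Fixing a Chevalley $\mathbb Z$-form, I would then write out $\mathscr G_{d,\lambda}$ explicitly in terms of the coefficients $\lambda(e_\gamma)$ for $\gamma \in \Phi_{d,2}$.

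For part~(1), assume $d \in \Delta_{\text{spec}}$, and construct an explicit $\lambda \in \operatorname{Hom}_{\mathbb Z}(\mathfrak g_{\mathbb Z,d}(2), \mathbb Z)$ whose Gram matrix is a signed permutation matrix. Concretely, I would match each $\alpha \in \Phi_{d,1}$ to a partner $\beta(\alpha) \in \Phi_{d,1}$ such that $\alpha + \beta(\alpha) \in \Phi_{d,2}$; specialness of $\mu$ should guarantee that this matching can be made uniformly and without parity obstruction. Taking $\lambda$ to have value $\pm 1$ on the basis vectors $e_{\alpha + \beta(\alpha)}$ and $0$ elsewhere, the Chevalley commutator formula makes $\mathscr G_{d,\lambda}$ a block anti-diagonal matrix of $2 \times 2$ symplectic blocks $\bigl(\begin{smallmatrix} 0 & 1 \\ -1 & 0 \end{smallmatrix}\bigr)$, so $\det \mathscr G_{d,\lambda} = \pm 1$. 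Since every $d$ in type $A_n$ is special, this same construction verifies Conjecture~\ref{c_4.10} in type $A$ as an independent byproduct.

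For part~(2), assume $d \notin \Delta_{\text{spec}}$ and $\operatorname{char} \mathbf k = 2$. Here the failure of Lusztig's specialness condition translates, at the level of the weight sequence $h$, into the existence of a pair of indices whose corresponding root vectors in $\mathfrak g_d(1)$ can be interchanged without altering any relevant structure constant modulo $2$. Their difference then lies in the radical of $\sigma_\lambda$ over $\mathbf k$ for \emph{every} $\lambda$, forcing $\det \mathscr G_{d,\lambda} = 0$. The main obstacle is precisely this $\lambda$-independent degeneracy: one must identify an intrinsic rank deficiency of the commutator pairing $\mathfrak g_d(1) \times \mathfrak g_d(1) \to \mathfrak g_d(2)$ in characteristic $2$ that is \emph{caused} by non-specialness, rather than being an artifact of a bad choice of $\lambda$. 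The case analysis will necessarily differ between $B_n$, $C_n$, and $D_n$, since the characteristic-$2$ failures of the orthogonal and symplectic structures manifest differently in each type, and type $D_n$ with its diagram fold symmetry will require extra care.
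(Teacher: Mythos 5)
Your overall strategy (partition combinatorics, explicit unimodular $\lambda$ for special $d$, forced degeneracy for non-special $d$ in characteristic $2$) points in the right direction, but both halves contain a genuine gap where the proposal asserts exactly what has to be proved. For part (1), the claim that one can match each $\alpha\in\Phi_{d,1}$ with a single partner $\beta(\alpha)$ so that $\mathscr G_{d,\lambda}$ becomes a signed permutation matrix of $2\times 2$ symplectic blocks overlooks the fact that $\lambda$ is a function on $\Phi_{d,2}$, not on pairs: setting $\lambda(e_{\alpha+\beta(\alpha)})=\pm1$ simultaneously switches on \emph{every} decomposition $\alpha+\beta(\alpha)=\alpha'+\beta'$ with $\alpha',\beta'\in\Phi_{d,1}$, and these decompositions overlap heavily. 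The entries of the Gram matrix cannot be prescribed independently, and in general no choice of $\lambda$ makes $\mathscr G_{d,\lambda}$ a perfect-matching matrix. The paper's proof instead reduces to odd weighted Dynkin diagrams, then to \emph{faithful} $\lambda$ (so that $\det\mathscr G_{d,\lambda}=\pm(\prod\det M_l)^2\det M_k$), and organizes $\Phi_{d,1}$ into chains (the $\mathscr R$/$\mathscr L$-orbits in type $A$, the $4\times4$ blocks of types (a), (b), (c) in types $B$, $C$, $D$) whose lengths and linking are controlled by the parity conditions $(\spadesuit_B)$, $(\spadesuit_C)$, $(\spadesuit_D)$; this is where specialness actually enters, and it is precisely the content your phrase ``specialness should guarantee that this matching can be made'' leaves unproved.

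For part (2), the proposed mechanism is not merely incomplete but incorrect as stated: if $e_\alpha+e_\beta$ (with $\alpha\neq\beta$) lay in the radical of $\sigma_\lambda$ for \emph{every} $\lambda$, then $\lambda([e_\alpha,e_\gamma])=\lambda([e_\beta,e_\gamma])$ for all $\gamma\in\Phi_{d,1}$ and all $\lambda$ would force $[e_\alpha,e_\gamma]$ and $[e_\beta,e_\gamma]$ to coincide in $\mathfrak g_d(2)$; since these lie in distinct root spaces $\mathfrak g_{\alpha+\gamma}$ and $\mathfrak g_{\beta+\gamma}$, both would have to vanish modulo $2$, i.e.\ the rows of $\mathscr G_{d,\lambda}$ indexed by $\alpha$ and $\beta$ would be identically zero for all $\lambda$. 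That does not happen for general non-special $d$: already for type $C_n$ with $d=(r)$, $r$ odd, the degeneracy comes from the blocks $A_{\lambda,k}$ being symmetric of odd size with zero diagonal (hence of even rank, hence singular, in characteristic $2$), and the kernel vector genuinely depends on $\lambda$. The paper's Proposition 2.2 is exactly the machinery that produces such a $\lambda$-dependent nonzero vector in the kernel of the structured block matrix $M_k$; your argument would need to be rebuilt around a $\lambda$-dependent radical element rather than a fixed difference of root vectors.
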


With Conjecture \ref{c_4.9} proved, we complete Geck's definition of GGGR in bad characteristics.

\setcounter{subsection}{8}
\subsection{}

This paper is organized as follows. Section 1 contains some preliminaries, 
as well as some reductions of Geck's conjecture. Section 2 deals with the determinant of certain symmetric matrices over a field with even characteristic.
In Section 3 we give the proof of Geck's conjecture in the type $A$ case.
To deal with the type $B$, $C$, $D$ cases, we introduce the notion of faithful maps and show that the proof can be reduced to considering the faithful maps.
Then Sections 5 to 8 are devoted to the proof of Geck's conjecture through case by case discussions. In the last section we give some remarks.

\medskip

\noindent Notations:
\noindent (1) Let $a ,b \in \mathbb Z$. We write $a \equiv b$ if $a - b \in 2\mathbb Z$.

\noindent (2) If $a \leq b$, then we denote the set $\{ i \in \mathbb Z \mid a \leq i \leq b \}$ as $[a,b]$.

\noindent (3) Write $\beta \in \Phi$ as $\beta = \sum_{\alpha \in \Pi} k_{\alpha} \alpha$ for $k_{\alpha} \in \mathbb Z$, then $k_{\alpha}$ is the \emph{multiplicity} of $\alpha \in \Pi$ in $\beta$, denoted as $[\beta: \alpha]$.

\noindent (4) Let $\mathscr P_n$ be the set of partitions of $n$, for any positive integer $n$.

\noindent (5) For $r\in \mathbb{R}$, we denote $ \lfloor r \rfloor $ the maximal integer less than $r$.

\section{Preliminaries}

\subsection{Root system} We always follow the notation and definitions given in \cite{G}.
As defined in Section \ref{s_intro}, there is a direct sum decomposition of $\mathfrak g$:
\begin{eqnarray}
\mathfrak g = \mathfrak t \oplus \bigoplus_{\alpha \in \Phi} \mathfrak g_{\alpha}, \nonumber
\end{eqnarray}
where $\mathfrak t=\text{Lie}(T)$ is the Cartan subalgebra of $\mathfrak g$. 
For each simple Lie algebra we can fix a choice of simple roots $\Pi$ and positive roots $\Phi^+ $. In this paper we consider the simple Lie algebra of classical types 
$A_n$, $B_n$, $C_n$ and $D_n$. So we list their root systems here for convenience.

\smallskip
\noindent (1) The fundamental system of type $A_n$ is

\begin{picture}(40,10)(-45,10)
\put(50,0){\circle{6}}
\put(90,0){\circle{6}}
\put(130,0){\circle{6}}
\put(135,0){$\dots$}
\put(150,0){$\dots$}
\put(165,0){$\dots$}
\put(180,0){$\dots$}
\put(200,0){\circle{6}}
\put(240,0){\circle{6}}
\put(35,-15){$\varepsilon_1-\varepsilon_2$}
\put(75,-15){$\varepsilon_2-\varepsilon_3$}
\put(175,-15){$\varepsilon_{n-1}-\varepsilon_{n}$}
\put(225,-15){$\varepsilon_n-\varepsilon_{n+1}$}
\put(53,0){\line(1,0){34}}
\put(93,0){\line(1,0){34}}
\put(203,0){\line(1,0){34}}
\end{picture}
\vspace{11 mm}

\noindent Then the positive roots with the above fundamental system is
$$\Phi^+ = \{ \varepsilon_i- \varepsilon_{j}, \ 1\leq i<j \leq n+1 \}.$$

\smallskip
\noindent (2) The fundamental system of type $C_n$ is

\begin{picture}(40,10)(-45,10)
\put(50,0){\circle{6}}
\put(90,0){\circle{6}}
\put(130,0){\circle{6}}
\put(135,0){$\dots$}
\put(150,0){$\dots$}
\put(165,0){$\dots$}
\put(180,0){$\dots$}
\put(200,0){\circle{6}}
\put(240,0){\circle{6}}
\put(35,-15){$\varepsilon_1-\varepsilon_2$}
\put(75,-15){$\varepsilon_2-\varepsilon_3$}
\put(175,-15){$\varepsilon_{n-1}-\varepsilon_{n}$}
\put(235,-15){$2\varepsilon_n$}
\put(53,0){\line(1,0){34}}
\put(93,0){\line(1,0){34}}
\put(203,-2){\line(1,0){34}}
\put(203,2){\line(1,0){34}}
\put(217,0){\line(2,1){10}}
\put(217,0){\line(2,-1){10}}
\end{picture}
\vspace{11 mm}

\noindent Then the positive roots with the above fundamental system is
$$\Phi^+ = \{ \varepsilon_i\pm \varepsilon_{j}, 1\leq i<j\leq n;\  2\varepsilon_k , 1\leq k \leq n \}.$$

\smallskip

\noindent (3) The fundamental system of type $B_n$ is

\begin{picture}(40,10)(-45,10)
\put(50,0){\circle{6}}
\put(90,0){\circle{6}}
\put(130,0){\circle{6}}
\put(135,0){$\dots$}
\put(150,0){$\dots$}
\put(165,0){$\dots$}
\put(180,0){$\dots$}
\put(200,0){\circle{6}}
\put(240,0){\circle{6}}
\put(35,-15){$\varepsilon_1-\varepsilon_2$}
\put(75,-15){$\varepsilon_2-\varepsilon_3$}
\put(175,-15){$\varepsilon_{n-1}-\varepsilon_{n}$}
\put(235,-15){$\varepsilon_n$}
\put(53,0){\line(1,0){34}}
\put(93,0){\line(1,0){34}}
\put(203,2){\line(1,0){34}}
\put(203,-2){\line(1,0){34}}
\put(225,0){\line(-2,1){10}}
\put(225,0){\line(-2,-1){10}}
\end{picture}
\vspace{11 mm}

\noindent Then the positive roots with the above fundamental system is
$$\Phi^+ = \{ \varepsilon_i\pm \varepsilon_{j}, 1\leq i<j\leq n;\  \varepsilon_k , 1\leq k \leq n \}.$$

\smallskip

\noindent (4) The fundamental system of type $D_n$ is

\begin{picture}(40,10)(-45,20)
\put(50,0){\circle{6}}
\put(90,0){\circle{6}}
\put(130,0){\circle{6}}
\put(135,0){$\dots$}
\put(150,0){$\dots$}
\put(165,0){$\dots$}
\put(180,0){$\dots$}
\put(200,0){\circle{6}}
\put(231,27){\circle{6}}
\put(231,-27){\circle{6}}
\put(35,-15){$\varepsilon_1-\varepsilon_2$}
\put(75,-15){$\varepsilon_2-\varepsilon_3$}
\put(160,-15){$\varepsilon_{n-2}-\varepsilon_{n-1}$}
\put(240,23){$\varepsilon_{n-1}-\varepsilon_{n}$}
\put(240,-29){$\varepsilon_{n-1}+\varepsilon_{n}$}
\put(53,0){\line(1,0){34}}
\put(93,0){\line(1,0){34}}
\put(203,1){\line(1,1){25}}
\put(203,-1){\line(1,-1){25}}
\end{picture}
\vspace{20 mm}

\noindent Then the positive roots with the above fundamental system is
$$\Phi^+ = \{ \varepsilon_i\pm \varepsilon_{j},\ 1\leq i<j\leq n \}.$$

\setcounter{theorem}{2}
\subsection{Weighted Dynkin diagram} \label{WDD}
Let $G_0$ be a connected reductive algebraic group over $\mathbb C$ of the same type as $G$ and let $\mathfrak g_0$ be its Lie algebra. Then by the classical Dynkin-Kostant theory, the nilpotent Ad$(G_0)$-orbits in $\mathfrak g_0$ are parameterized by weighted Dynkin diagrams.
If $G_0$ is a simple algebraic group, then the corresponding set $\Delta$ of weighted Dynkin diagrams is explicitly known in all cases
.
For convenience, we put here, the descriptions of the set $\Delta$ of weighted Dynkin diagrams of classical types,
and the set $\Delta_{\text{spec}}$ of special weighted Dynkin diagrams of types $B_n,C_n$ and $D_n$, which are given in \cite[\S 13.1]{C}.

\smallskip

\noindent \textbf{Type $A_n$}: The weighted Dynkin diagrams are 
parametrized by $\mathscr P_{n+1}$ when the group $G_0$ is simple of type $A_n$. 
Let $\mu=(\mu_1,\mu_2,\dots,\mu_k) \in \mathscr P_{n+1}$. For each elementary division $\mu_i$, we take the set of integers $\mu_i-1, \mu_i-3, \dots, 3-\mu_i,1-\mu_i$.
We take the union of these sets for all elementary divisors and write this union as $\{\xi_1,\xi_2,\dots, \xi_{n+1}\}$ with $\xi_1\geq \xi_2 \geq \dots \geq \xi_{n+1}$. Then the corresponding
weighted Dynkin diagram is

\begin{picture}(40,10)(-45,10)
\put(50,0){\circle{6}}
\put(90,0){\circle{6}}
\put(130,0){\circle{6}}
\put(135,0){$\dots$}
\put(150,0){$\dots$}
\put(165,0){$\dots$}
\put(180,0){$\dots$}
\put(200,0){\circle{6}}
\put(240,0){\circle{6}}
\put(35,-15){$\xi_1-\xi_2$}
\put(75,-15){$\xi_2-\xi_3$}
\put(175,-15){$\xi_{n-1}-\xi_{n}$}
\put(225,-15){$\xi_n-\xi_{n+1}$}
\put(53,0){\line(1,0){34}}
\put(93,0){\line(1,0){34}}
\put(203,0){\line(1,0){34}}
\end{picture}
\vspace{11 mm}

\smallskip

\noindent \textbf{Type $C_n$}:
The weighted Dynkin diagrams are parametrized by the pairs of partitions $(\mu, \nu)$ with $|\mu|+|\nu|=n$ where $\nu$ has distinct parts
.
For such a partition $(\mu, \nu)$, write $\mu=(\mu_1,\mu_2, \cdots, \mu_k)$,
$\nu=(\nu_1,\nu_2,\cdots,\nu_l)$ and 
consider the elementary divisors $\mu_1, \mu_1, \mu_2, \mu_2,\dots, \mu_k, \mu_k$, together with $2\nu_1,2\nu_2,\dots, 2\nu_l$.
Arrange them as an ordered sequence $\mathfrak d=(m_1,m_2,\dots)$, where $m_1 \leq m_2 \leq \cdots$.
For each elementary divisor $m$
in $\mathfrak d$, we take the set of integers $m-1, m-3, \dots, 3-m,1-m$. We take the union of these sets for all elementary divisors and write this union as $\{\xi_1,\xi_2,\dots, \xi_{2n}\}$ with $\xi_1\geq \xi_2 \geq \dots \geq \xi_{2n}$.
Then the associated weighted Dynkin diagram is

\begin{picture}(40,10)(-45,10)
\put(50,0){\circle{6}}
\put(90,0){\circle{6}}
\put(130,0){\circle{6}}
\put(135,0){$\dots$}
\put(150,0){$\dots$}
\put(165,0){$\dots$}
\put(180,0){$\dots$}
\put(200,0){\circle{6}}
\put(240,0){\circle{6}}
\put(35,-15){$\xi_1-\xi_2$}
\put(75,-15){$\xi_2-\xi_3$}
\put(180,-15){$\xi_{n-1}-\xi_n$}
\put(235,-15){$2 \xi_n$}
\put(53,0){\line(1,0){34}}
\put(93,0){\line(1,0){34}}
\put(203,-2){\line(1,0){34}}
\put(203,2){\line(1,0){34}}
\put(217,0){\line(2,1){10}}
\put(217,0){\line(2,-1){10}}
\end{picture}
\vspace{11 mm}

Denote this weighted Dynkin diagram as $d \in \Delta$.
Then $d$ is special if and only if the following condition is satisfied:

\noindent ($\spadesuit_C$) Between any two consecutive odd elementary divisors $2i + 1$,  $2j + 1$ in $\mathfrak{d}$ with $i < j$, there is an even number of even divisors, and after the largest odd divisor there is
an even number of even divisors.

\smallskip

\noindent \textbf{Type $B_n$}:
The weighted Dynkin diagrams
are parametrized by the pairs of partitions $(\mu, \nu)$ with $2|\mu|+|\nu|=2n+1$ where $\nu$ has distinct odd parts
. For such a partition $(\mu, \nu)$, write $\mu=(\mu_1,\mu_2, \cdots, \mu_k)$,
$\nu=(\nu_1,\nu_2,\cdots,\nu_l)$ and 
consider the elementary divisors $\mu_1, \mu_1, \mu_2, \mu_2,\dots, \mu_k, \mu_k$, together with $\nu_1,\nu_2,\dots, \nu_l$.
Arrange them as an ordered sequence $\mathfrak d=(m_1,m_2,\dots)$, where $m_1 \leq m_2 \leq \cdots$.
For each elementary divisor $m$ in $\mathfrak d$, we take the set of integers $m-1, m-3, \dots, 3-m,1-m$. We take the union of these sets for all elementary divisors and write this union as $\{\xi_1,\xi_2,\dots, \xi_{2n+1}\}$ with $\xi_1\geq \xi_2 \geq \dots \geq \xi_{2n+1}$.
Then the associated weighted Dynkin diagram is

\begin{picture}(40,10)(-45,10)
\put(50,0){\circle{6}}
\put(90,0){\circle{6}}
\put(130,0){\circle{6}}
\put(135,0){$\dots$}
\put(150,0){$\dots$}
\put(165,0){$\dots$}
\put(180,0){$\dots$}
\put(200,0){\circle{6}}
\put(240,0){\circle{6}}
\put(35,-15){$\xi_1-\xi_2$}
\put(75,-15){$\xi_2-\xi_3$}
\put(180,-15){$\xi_{n-1}-\xi_n$}
\put(238,-15){$\xi_n$}
\put(53,0){\line(1,0){34}}
\put(93,0){\line(1,0){34}}
\put(203,2){\line(1,0){34}}
\put(203,-2){\line(1,0){34}}
\put(225,0){\line(-2,1){10}}
\put(225,0){\line(-2,-1){10}}
\end{picture}
\vspace{11 mm}

Denote this weighted Dynkin diagram as $d \in \Delta$.
Then $d$ is special if and only if the following condition is satisfied:

\noindent ($\spadesuit_B$) Between any two consecutive even elementary divisors of $\mathfrak{d}$ there is an even number of odd divisors, and after the largest even divisors there is an odd number of odd divisors.

\smallskip

\noindent \textbf{Type $D_n$}:
The weighted Dynkin diagrams 
are parametrized by the pairs of partitions $(\mu, \nu)$ with $2|\mu|+|\nu|=2n$ where $\nu$ has distinct odd parts
. For such a partition $(\mu, \nu)$, write $\mu=(\mu_1,\mu_2, \cdots, \mu_k)$,
$\nu=(\nu_1,\nu_2,\cdots,\nu_l)$ and 
consider the elementary divisors $\mu_1, \mu_1, \mu_2, \mu_2,\dots, \mu_k, \mu_k$, together with $\nu_1,\nu_2,\dots, \nu_l$.
Arrange them as an ordered sequence $\mathfrak d=(m_1,m_2,\dots)$, where $m_1 \leq m_2 \leq \cdots$.
For each elementary divisor $m$ in $\mathfrak d$, we take the set of integers $m-1, m-3, \dots, 3-m,1-m$. We take the union of these sets for all elementary divisors and write this union as $\{\xi_1,\xi_2,\dots, \xi_{2n}\}$ with $\xi_1\geq \xi_2 \geq \dots \geq \xi_{2n}$.
Then the associated weighted Dynkin diagram is
\begin{eqnarray} \label{d_D1}
\text{
\begin{picture}(40,10)(130,20)
\put(50,0){\circle{6}}
\put(90,0){\circle{6}}
\put(130,0){\circle{6}}
\put(135,0){$\dots$}
\put(150,0){$\dots$}
\put(165,0){$\dots$}
\put(180,0){$\dots$}
\put(200,0){\circle{6}}
\put(231,27){\circle{6}}
\put(231,-27){\circle{6}}
\put(35,-15){$\xi_1-\xi_2$}
\put(75,-15){$\xi_2-\xi_3$}
\put(160,-15){$\xi_{n-2}-\xi_{n-1}$}
\put(240,23){$\xi_{n-1}-\xi_{n}$}
\put(240,-29){$\xi_{n-1}+\xi_{n}$}
\put(53,0){\line(1,0){34}}
\put(93,0){\line(1,0){34}}
\put(203,1){\line(1,1){25}}
\put(203,-1){\line(1,-1){25}}
\end{picture}}
\end{eqnarray}
\vspace{20 mm}

\noindent or

\begin{eqnarray} \label{d_D2}
\text{
\begin{picture}(40,10)(130,20)
\put(50,0){\circle{6}}
\put(90,0){\circle{6}}
\put(130,0){\circle{6}}
\put(135,0){$\dots$}
\put(150,0){$\dots$}
\put(165,0){$\dots$}
\put(180,0){$\dots$}
\put(200,0){\circle{6}}
\put(231,27){\circle{6}}
\put(231,-27){\circle{6}}
\put(35,-15){$\xi_1-\xi_2$}
\put(75,-15){$\xi_2-\xi_3$}
\put(160,-15){$\xi_{n-2}-\xi_{n-1}$}
\put(240,23){$\xi_{n-1}+\xi_{n}$}
\put(240,-29){$\xi_{n-1}-\xi_{n}$}
\put(53,0){\line(1,0){34}}
\put(93,0){\line(1,0){34}}
\put(203,1){\line(1,1){25}}
\put(203,-1){\line(1,-1){25}}
\end{picture}}
\end{eqnarray}
\vspace{20 mm}

If there is some odd part in $(\mu,\nu)$ then $\xi_n=0$ and so the diagrams are the same. If $\nu$ is empty and all parts of $\mu$ are even then $\xi_n\ne 0$  and we obtained two weighted Dynkin diagrams associate to the same $(\mu,\nu)$.

Denote this weighted Dynkin diagram as $d \in \Delta$.
Then $d$ is special if and only if the following condition is satisfied:

\noindent ($\spadesuit_D$) Between any two consecutive even elementary divisors of $\mathfrak{d}$ there is an even number of odd divisors, and after the largest even divisor there is an even number of odd divisors.

\subsection{Gram matrix} Let $d\in \Delta$ and assume that $\mathfrak g_d(1) \neq \{0\}$. (So $\mathfrak g_{\mathbb Z,d}(1) \neq \{0\}$.)
Given a linear map $\lambda:\mathfrak g_d(2)\rightarrow \mathbf k$ (resp. $\lambda: \mathfrak g_{\mathbb Z,d}(2) \rightarrow \mathbb Z$), we obtain an alternating form $\sigma_{\lambda}: \mathfrak g_d(1) \times \mathfrak g_d(1) \rightarrow \mathbf k$ (resp. $\sigma_{\lambda }: \mathfrak g_{\mathbb Z,d}(1)\times \mathfrak g_{\mathbb Z,d}(1) \rightarrow \mathbb Z$), and as in Section 0, we denote its Gram matrix as $\mathscr G_{d,\lambda}$.

 Write $\Phi_{d,1}=\{\beta_1,\beta_2, \dots, \beta_k \}$ and $\Phi_{d,2}=\{\gamma_1,\gamma_2,\dots,\gamma_m\}$.
The entries of $\mathscr G_{d,\lambda}$ are given as follows.
We set $x_l:=\lambda(e_{\gamma_l})$ for $1\leq l\leq m$. For $1\leq i, j\leq k$, we define an element $\nu_{ij}\in \mathbf k$ (resp. $\nu_{ij}\in \mathbb Z$) as follows. If $\beta_i+\beta_j \notin \Phi$, then $\nu_{ij}=0$. Otherwise, there is a unique $l(i,j)\in \{1,2,\dots,m\}$ and some $\nu_{ij}\in \mathbf k$ (resp. $\nu_{ij}\in \mathbb Z$ ) such that $[e_{\beta_i},e_{\beta_j}]=\nu_{ij}e_{\gamma_{l(i,j)}}$. Then we have
\begin{eqnarray}
(\mathscr G_{d,\lambda})_{ij} =\sigma_{\lambda}(e_{\beta_i},e_{\beta_j})=
\begin{cases}
x_{l(i,j)}\nu_{ij} & \text{ if } \beta_i+\beta_j \in \Phi, \\

0 &  \ \text{otherwise.}
\end{cases} \nonumber
\end{eqnarray}

For convenience of later computation, we can choose  $e_{\alpha}$  in $\mathfrak g_{\alpha}$ for each $\alpha \in \Phi$, which satisfies:
\begin{eqnarray}
[e_{\alpha}, e_{\beta} ] =
\begin{cases}
0 & \text{ if } \alpha+ \beta \not\in \Phi, \\

\pm(r+1)e_{\alpha+\beta}&  \text{ if } \alpha+ \beta \in \Phi,
\end{cases} \nonumber
\end{eqnarray}
where $r$ is the greatest integer such that $\beta-r\alpha\in \Phi$.
For the type $A$ case, such integer $r$ is always zero. For other classical cases (type $B,C,D$), such integer $r\in\{0,1\}$.

\subsection{Reduction of the main theorem} \label{reduction}

To prove Theorem \ref{mainThm2},
we do some reduction first. Put $\Delta^o =\{ d \in \Delta \mid  \displaystyle \max_{\alpha \in \Pi}d(\alpha) =1 \}$. The elements in $\Delta^o$ are called \emph{odd weighted Dynkin diagrams}.
We show that to prove Theorem \ref{mainThm1}, it is enough to consider the odd weighted Dynkin diagrams.

For a fixed simple root system $\Pi$ of classical type and given a partition (type $A$) or a bipartition (type $B$, $C$, $D$) associated to a unipotent class, we have a corresponding set of elementary divisors
$\mathfrak{d}$. Then we can construct a  weighted Dynkin diagram $d\in \Delta$ from $\mathfrak{d}$. Now we assume that $d\in \Delta\setminus (\Delta^o \cup 0 ) $, i.e. 
$d(\alpha)=2$ for some $\alpha \in \Pi$.

Suppose that $\mathfrak{d}=(1^{m_1}2^{m_2}\dots r^{m_r})$ with $m_r\ne 0$. Since $d\in \Delta\setminus (\Delta^o \cup 0 ) $, the maximal integer $s<r$ such that $m_s \ne 0$ has to satisfy $s<r-1$. Then we set $t=r-2 \lfloor\frac{r-s}{2} \rfloor$ and $m_t=m_r$. Now we consider a new set of elementary divisors
$\mathfrak{d}'= (1^{m_1}2^{m_2}\dots s^{m_s}t^{m_t})$.  By its construction, it is not difficult to check that the set of elementary divisors $\mathfrak{d}'$ comes from an appropriate partition (or a bipartition).
Thus $\mathfrak{d}'$ also gives a weighted Dynkin diagram $d'\in \Delta'$ of a simple root system whose rank is smaller than the original simple root system.
Therefore to study the Gram matrix  $\mathscr G_{d,\lambda}$ can be reduced to study the Gram matrix
$\mathscr G_{d',\lambda'}$ where $\lambda'$ is the restriction of  $\lambda$. Moreover $d\in \Delta$ is special if and only if $d'\in \Delta'$ is special. Such reduction process can go on until $d$ becomes an odd weighted Dynkin diagram or $d(\Pi)=\{0\}$.
When $d(\Pi)=\{0\}$, $d$ is special and $\mathfrak g_{d}(1)=\{0\}$.
Thus Theorem \ref{mainThm1} will be implied by the following Theorem.

\setcounter{theorem}{4}
\begin{theorem} \label{mainThm2}
Assume that $G$ is of classical type $A_n$, $B_n$, $C_n$ or $D_n$. Let $d \in \Delta^o$, which implies that $\mathfrak g_{d}(1) \neq \{0\}$.
\begin{enumerate}
\item If $d \in \Delta_{\text{spec}}$, then there exists a
    non-degenerate $\lambda \in \operatorname{Hom}_{\mathbb Z}(\mathfrak g_{\mathbb Z,d}(2), \mathbb Z)$.
\item  If $d \not\in \Delta_{\text{spec}}$ and $\operatorname{char} \mathbf k =2$, then $\det \mathscr G_{d,\lambda} = 0$ for any $\lambda \in \operatorname{Hom}_{\mathbf k}(\mathfrak g_d(2), \mathbf k)$.
\end{enumerate}
\end{theorem}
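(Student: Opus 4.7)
The plan is to exploit the explicit combinatorial description of odd weighted Dynkin diagrams to compute or rule out the determinant of the Gram matrix $\mathscr G_{d,\lambda}$. Since every simple-root weight $d(\alpha)$ lies in $\{0,1\}$, each positive root $\gamma \in \Phi_{d,2}$ can be written as $\gamma = \beta_i + \beta_j$ for some (possibly equal) $\beta_i, \beta_j \in \Phi_{d,1}$, and the structure constants $\pm(r+1)$ with $r\in\{0,1\}$ force the entries of $\mathscr G_{d,\lambda}$ to lie in $\{0, \pm x_l, \pm 2 x_l\}$, where $x_l := \lambda(e_{\gamma_l})$. The key observation driving part~(2) is that in characteristic $2$ all entries of the form $\pm 2 x_l$ vanish, so only pairs $(\beta_i,\beta_j)$ with $\beta_i + \beta_j \in \Phi$ and $\beta_i - \beta_j \notin \Phi$ contribute to the matrix. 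The characteristic-$2$ rank of $\sigma_\lambda$ thus becomes a purely combinatorial invariant of $d$.

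For type $A_n$, one has $r=0$ always, so all relevant structure constants are $\pm 1$ and every $d \in \Delta$ is special; only part~(1) needs checking. I would list $\Phi_{d,1}$ explicitly as the pairs $(i,j)$ with $\xi_i - \xi_j = 1$ in the sequence $(\xi_1 \geq \cdots \geq \xi_{n+1})$ attached to $d$, take the integral $\lambda$ with $x_l = 1$ for all $l$, and then verify --- after a suitable reordering of $\Phi_{d,1}$ that pairs up the "ascending" and "descending" roots at each height --- that $\mathscr G_{d,\lambda}$ is block anti-diagonal with $\pm 1$ determinants, hence $\det \mathscr G_{d,\lambda} = \pm 1$.

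For types $B_n, C_n, D_n$ the analysis is considerably more delicate owing to the long roots $2\varepsilon_i$, the short roots $\varepsilon_i$, and the $\pm 2$ structure constants attached to pairs with $\beta_i - \beta_j \in \Phi$. The strategy is to introduce the notion of a \emph{faithful} $\lambda$ (roughly, one whose coordinates $x_l$ are generic enough at each weight-$2$ root that can be written as a sum in more than one way) and to reduce both parts to the faithful case: for part~(2) by observing that $\det \mathscr G_{d,\lambda}$ is polynomial in the $x_l$, so vanishing on the Zariski-dense faithful locus forces identical vanishing; for part~(1) by directly constructing a faithful integral $\lambda$ achieving $\det = \pm 1$. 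With this reduction in hand, I would parameterize $\Phi_{d,1}$ and $\Phi_{d,2}$ in terms of the multiplicities $n_k := \#\{i : \xi_i = k\}$ extracted from the elementary-divisor sequence $\mathfrak d$, obtaining an explicit combinatorial formula for $\det \mathscr G_{d,\lambda}$.

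The main obstacle is to match this combinatorial expression with the specialness conditions $(\spadesuit_B)$, $(\spadesuit_C)$, $(\spadesuit_D)$, each of which imposes parity constraints on the numbers of even (resp.\ odd) divisors appearing between consecutive odd (resp.\ even) divisors of $\mathfrak d$. I expect, for each of the three types, to (i) decompose $\mathscr G_{d,\lambda}$ into blocks indexed by pairs of elementary divisors of $\mathfrak d$; (ii) when $d$ is special, build an integral $\lambda$ block by block and choose signs so that each block contributes $\pm 1$ to the full determinant; and (iii) when $d$ is non-special and $\operatorname{char} \mathbf k = 2$, locate the two "offending" divisors violating $(\spadesuit)$ and produce, from the corresponding weight-$1$ roots, an explicit nonzero null vector of $\mathscr G_{d,\lambda}$. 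The bookkeeping needed to carry this through the various configurations of odd/even divisors in each classical type is what forces the long case-by-case argument of Sections~5--8.
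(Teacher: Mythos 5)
Your overall architecture (reduce to a distinguished class of $\lambda$, block-decompose $\mathscr G_{d,\lambda}$, construct an explicit integral $\lambda$ in the special case, exhibit a kernel vector in the non-special case) does match the paper's, but your concrete recipe for type $A$ contains a genuine error. Setting $x_l=1$ for \emph{all} $l$ does not work: take $A_4$ with partition $(2,2,1)$, so $d=(0,1,1,0)$ and $\Phi_{d,1}=\{\alpha_2,\ \alpha_1+\alpha_2,\ \alpha_3,\ \alpha_3+\alpha_4\}$. The Gram matrix is block anti-diagonal as you predict, but the off-diagonal block is the $2\times 2$ matrix with entries $\pm\lambda(e_{\varepsilon_{23}}),\pm\lambda(e_{\varepsilon_{24}}),\pm\lambda(e_{\varepsilon_{13}}),\pm\lambda(e_{\varepsilon_{14}})$, whose determinant with all values equal to $1$ lies in $\{0,\pm 2\}$ for any choice of Chevalley signs (and is $0$ for the standard $\mathfrak{sl}_5$ basis). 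So the all-ones $\lambda$ is never non-degenerate over $\mathbb Z$ here. The point is that one must choose the \emph{support} of $\lambda$ inside $\Phi_{d,2}$ carefully; the paper does this by partitioning $\Phi_{d,1}$ into orbits of explicit ``right/left transformations'' $\mathscr R,\mathscr L$ and letting $\lambda$ be $1$ only on the sums of consecutive orbit elements (plus one linking root per odd orbit), which is what forces each block to have determinant $\pm 1$.

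For types $B$, $C$, $D$ your notion of ``faithful'' (generic coordinates) differs from the paper's, which is a \emph{support} condition: $\lambda$ must vanish on all weight-$2$ roots $\varepsilon_s-\varepsilon_t$ with $[\gamma:\alpha_{i_l}]=[\gamma:\alpha_{i_{l+1}}]=1$ whose width $t-s$ exceeds $i_{l+1}-(i_{l-1}+1)$. That condition cuts out a linear subspace (not a dense open set), and it is exactly what makes the sets $P_l,Q_l$ and the factorization $\det\mathscr G_{d,\lambda}=\pm(\prod_{l}\det M_l)^2\det M_k$ available; a Zariski-density argument cannot substitute for it, and the paper instead transports an arbitrary $\lambda$ to a faithful one by explicit automorphisms of the weighted Lie algebra (with a Euclidean-algorithm variant needed over $\mathbb Z$). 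Finally, the hardest ingredient of part (2) --- producing a nonzero null vector of $M_k$ from the parity failure in $(\spadesuit_B)$, $(\spadesuit_C)$, $(\spadesuit_D)$ --- is only gestured at in your plan; in the paper this is isolated as a standalone linear-algebra statement over a field of characteristic $2$ (Proposition \ref{p_det_D}), whose three cases are what the type-by-type reductions ultimately invoke. Without an analogue of that proposition and of the support-based factorization, the plan as written cannot be completed.
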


In the following sections we will prove Theorem \ref{mainThm2} through case by case discussions. For (1), we will construct a non-degenerate $\lambda \in \operatorname{Hom}_{\mathbb Z}(\mathfrak g_{\mathbb Z,d}(2), \mathbb Z)$ explicitly when $d \in \Delta_{\text{spec}}$.
For (2), we need to calculate the determinant of symmetric matrices over  $\mathbf k$ when $\text{char}~\mathbf k =2 $; the properties of this type of matrices will be given in the next section.

\section{Matrices over a field of even characteristic}
The only bad prime is $2$ for classical groups of types $B,C$ and $D$. So in this section we study the determinant of certain symmetric  matrices over a field of even characteristic. 
Throughout this section, we assume that $\operatorname{ch} \mathbf k=2$.

For $A\in M_{m\times n}(\mathbf k )$ and $B\in M_{l\times n}(\mathbf k )$ with $m\leq l$, we denote the rows of $A$ by $A_1, A_2, \dots A_m$ and the rows of $B$ by $B_1,B_2,\dots, B_l$.
Denote $A\prec B$ if 
there exists an injective map $\tau: [1,m] \rightarrow [1,l]$, satisfying $A_i = B_{\tau(i)}$ for $i \in [1,m]$.

We give the following lemma, whose proof is easy, and thus omitted here.

\begin{lemma}\label{evendegSM}
Let $m \in 2\mathbb N$, and $A \in M_m(\mathbf k)$. Assume that $A$ is symmetric with all diagonal elements zero and that $\det A=0 $. Let $X=\{x\in \mathbf k^m \mid A{^tx}=0\}$. Then $\dim X \geq 2$.
\end{lemma}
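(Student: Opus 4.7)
The plan is to observe that, in characteristic $2$, the hypotheses on $A$ say precisely that $A$ represents an alternating bilinear form on $\mathbf k^{m}$, and then to invoke the classical fact that alternating forms have even rank.

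First I would verify that for any $x=(x_1,\dots,x_m)\in \mathbf k^{m}$,
\[
{}^{t}x\,A\,x \;=\; \sum_{i} A_{ii}\, x_i^{2} \;+\; \sum_{i<j}(A_{ij}+A_{ji})\,x_i x_j \;=\; 0,
\]
where the first sum vanishes by the zero-diagonal assumption and the second vanishes because $A$ is symmetric and $1+1=0$ in $\mathbf k$. Thus the bilinear form $(x,y)\mapsto {}^{t}x\,A\,y$ is alternating.

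Second, I would apply the standard structure theorem for alternating forms: over any field, an alternating bilinear form is congruent to an orthogonal direct sum of hyperbolic planes $H=\left(\begin{smallmatrix}0 & 1\\ 1 & 0\end{smallmatrix}\right)$ (the sign is immaterial in characteristic $2$) together with a zero block. One proves this by induction on $m$: if $A\neq 0$, choose $u,v\in \mathbf k^{m}$ with ${}^{t}u\,A\,v\neq 0$, rescale to obtain a hyperbolic pair, split off its two-dimensional span, and iterate on the $A$-orthogonal complement. In particular, $\operatorname{rank}(A)$ is always even.

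Finally, since $\det A=0$ one has $\operatorname{rank}(A)\le m-1$; combined with $m$ even and $\operatorname{rank}(A)$ even, this forces $\operatorname{rank}(A)\le m-2$, and hence $\dim X=m-\operatorname{rank}(A)\ge 2$. There is no serious obstacle in this argument; the only subtle point is that the zero-diagonal assumption (and not merely symmetry) is essential, since it is exactly what upgrades a symmetric matrix in characteristic $2$ to an alternating one, which is where the parity of the rank comes from.
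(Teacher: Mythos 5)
Your argument is correct: in characteristic $2$ the zero-diagonal symmetric matrix $A$ is precisely the Gram matrix of an alternating form, so its rank is even, and combined with $\det A=0$ and $m$ even this forces $\operatorname{rank}(A)\le m-2$, i.e.\ $\dim X\ge 2$. The paper omits the proof of this lemma as ``easy,'' and what you have written is exactly the standard argument the authors evidently had in mind, with the key point (that the zero-diagonal hypothesis, not mere symmetry, is what makes the form alternating) correctly identified.
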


\begin{proposition} \label{p_det_D}

Let $n, N \in \mathbb Z_+$ and $\Gamma\in \{2n, 2n+1\}$. For $r \in [1,N]$, let $k_r \in \mathbb Z_+$, $m_r \in [1,2n]$, and $A_r \in M_{m_r \times \Gamma}(\mathbf k)$. Assume that
$m_r <m_s$ and $A_r \prec A_s$ for $r <s$ in $[1,N]$. Let
$E^{r,s}_{i,j}\in M_{\Gamma}(\mathbf k )$, for $r,s \in [1,N]$, $i \in [1, k_r]$, $j \in [1, k_s]$, which is of the form
\begin{eqnarray}
E^{r,s}_{i,j} =
\begin{cases}
\begin{pmatrix}
& 0          & e^{r,s}_{i,j} I_n\\
& e^{r,s}_{i,j} I_n            &0\\
\end{pmatrix} & \text{ if } \Gamma=2n, \\
\begin{pmatrix}
& 0  & 0  & 0\\
& 0 & 0          & e^{r,s}_{i,j} I_n\\

& 0 & e^{r,s}_{i,j} I_n            &0\\
\end{pmatrix} & \text{ if } \Gamma=2n+1,
\end{cases} \nonumber
\end{eqnarray}
where $e^{r,s}_{i,j}= e^{s,r}_{j,i} \in \mathbf k$.
Let
\begin{eqnarray}
S_{i,i} & = & \left(\begin{array}{cc; {1pt/1pt}cc; {1pt/1pt}cc; {1pt/1pt}cc}
   0  & A_{i,1} &  0      &   0   & \cdots  & \cdots  &0 &0         \\
  {^t A_{i,1}}   & 0   & 0   &  E^{i,i}_{1,2}   & \cdots  & \cdots  &0 & E^{i,i}_{1,k_i} \\
  \hdashline[1pt/1pt]
  0  & 0   & 0   & A_{i,2}    &  \dots  & \dots  &  0   &0 \\
0  & E^{i,i}_{2,1}  & {^t A_{i,2} }    & 0  & \cdots     &  \dots    & 0  & E^{i,i}_{2,k_i}  \\
\hdashline[1pt/1pt]
\vdots  & \vdots   & \vdots   & \vdots     &  \vdots  & \vdots  &  \vdots  & \vdots  \\
\vdots  & \vdots   & \vdots   & \vdots     &  \vdots  & \vdots  &  \vdots  & \vdots  \\
\hdashline[1pt/1pt]
0 & 0    &  0 & 0 &  \dots  & \dots   &  0 & A_{i,k_i} \\
0 & E^{i,i}_{k_i,1}   & 0  &E^{i,i}_{k_i,2}  &  \dots  & \dots   & {^t  A_{i,k_i}} &  0 \\
\end{array}\right) \quad \text{ for } i \in [1,N], \nonumber \\
S_{i,j} & = &  \left(\begin{array}{cc; {1pt/1pt}cc; {1pt/1pt}cc; {1pt/1pt}cc}
   0  & 0 &  0      &   0   & \cdots  & \cdots  &0 &0         \\
  0   & E^{i,j}_{1,1}   & 0   &  E^{i,j}_{1,2}   & \cdots  & \cdots  &0 & E^{i,j}_{1,k_j} \\
  \hdashline[1pt/1pt]
  0  & 0   & 0   & 0    &  \dots  & \dots  &  0   &0 \\
0  & E^{i,j}_{2,1}  & 0   & E^{i,j}_{2,2} & \cdots     &  \dots    & 0  & E^{i,j}_{2,k_j}  \\
\hdashline[1pt/1pt]
\vdots  & \vdots   & \vdots   & \vdots     &  \vdots  & \vdots  &  \vdots  & \vdots  \\
\vdots  & \vdots   & \vdots   & \vdots     &  \vdots  & \vdots  &  \vdots  & \vdots  \\
\hdashline[1pt/1pt]
0 & 0    &  0 & 0 &  \dots  & \dots   &  0 & 0 \\
0 & E^{i,j}_{k_i,1}   & 0  &E^{i,j}_{k_i,2}  &  \dots  & \dots   & 0 &  E^{i,j}_{k_i,k_j} \\
\end{array}\right) \quad \text{ for } i\neq j \text{ in }[1,N], \nonumber
\end{eqnarray}
where $A_{r,i}=A_r$ for $i\in [1, k_r]$, and $S$ be the blocked matrix $(S_{i,j})_{i,j \in [1,N]}$.
Then $\det S=0$ if one of the following conditions is satisfied:
\begin{enumerate}
\item $\Gamma = 2n$, $m_r$ is even for $r \in [1,N]$ and there exists $\sigma \in [1,N]$ with $k_{\sigma}$ odd and $m_{\sigma} <2n$;
\item $\Gamma= 2n$, and there exits $\sigma \in [1,N]$ with $m_{\sigma}$  odd;
\item $\Gamma =2n+1$, and $m_{\sigma}$ is even for some $\sigma \in [1,N]$.
\end{enumerate}
\end{proposition}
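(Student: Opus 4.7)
\emph{Proof plan.} The starting observation is that $S$ is symmetric with all diagonal entries zero, so in characteristic $2$ it is alternating and $\det S=\operatorname{Pf}(S)^{2}$; it therefore suffices to exhibit a non-zero vector in $\ker S$ in each of the three cases.

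After reordering rows and columns so that, within each block $i$, the $k_i$ ``odd'' components (of size $m_i$) appear first followed by the $k_i$ ``even'' components (of size $\Gamma$), the matrix $S$ is conjugate to
\[
\tilde S=\begin{pmatrix} 0 & \mathcal A\\ {}^t\mathcal A & \tilde E\otimes J_\Gamma\end{pmatrix},
\]
where $\mathcal A=\operatorname{diag}_i(I_{k_i}\otimes A_i)$ is block-diagonal, $\tilde E=(e^{i,j}_{l,l'})$ is the $K\times K$ alternating scalar matrix with $K=\sum_i k_i$, and $J_\Gamma$ equals $\bigl(\begin{smallmatrix}0 & I_n\\ I_n & 0\end{smallmatrix}\bigr)$ for $\Gamma=2n$, or the same matrix bordered by a zero first row and column for $\Gamma=2n+1$. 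Writing a candidate null vector as $\vec v=(u_{i,l},w_{i,l})$ with $u_{i,l}\in \mathbf k^{m_i}$ and $w_{i,l}\in \mathbf k^\Gamma$, the equation $S\vec v=0$ becomes the coupled system
\begin{align*}
A_i\, w_{i,l} &= 0,\\
{}^tA_i\, u_{i,l} + J_\Gamma \sum_{(j,l')\neq(i,l)} e^{i,j}_{l,l'}\, w_{j,l'} &=0,
\end{align*}
for every $(i,l)$. A key structural input is the nested chain $X_N\subseteq \cdots\subseteq X_1$ of kernels $X_i:=\ker A_i$, which follows because $A_r\prec A_s$ forces the rows of $A_r$ to lie among those of $A_s$ when $r<s$.

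I would then construct a non-trivial null vector case by case. In case~(1), since $k_\sigma$ is odd the $k_\sigma\times k_\sigma$ alternating scalar block $\tilde E^{\sigma,\sigma}$ is singular; take $0\neq \vec c\in \ker \tilde E^{\sigma,\sigma}$. As $m_\sigma$ is even and $m_\sigma<2n$, one has $\dim X_\sigma\geq 2n-m_\sigma\geq 2$, and Lemma~\ref{evendegSM} guarantees enough freedom in $X_\sigma$ to pick $x\neq 0$ annihilating the residual off-block constraints; setting $w_{\sigma,l}=c_l x$, $w_{i,l}=0$ for $i\neq\sigma$ and $u_{i,l}=0$ then yields a non-trivial element of $\ker S$. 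In case~(2), $J_{2n}$ is invertible, so taking $u_{i,l}=0$ and restricting $w_{i,l}\in X_i$ turns the system into the linear condition $\tilde E\vec w=0$; the odd parity of $m_\sigma$ pushes a $\sigma$-local alternating matrix to odd size, hence singular, and the nested filtration lets the resulting vector propagate to a global kernel element. In case~(3), the kernel of $J_{2n+1}$ is $\mathbf k e_1$; together with the even parity of $m_\sigma$ this produces an alternating sub-system of odd size whose non-trivial kernel lifts back to a null vector of $S$.

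The chief obstacle in every case is to extend a null-vector construction localised in block $\sigma$ to a global one, because the off-diagonal equations involve the \emph{arbitrary} scalars $e^{i,\sigma}_{l,l'}$ with $i\neq\sigma$. This is overcome by exploiting the symmetry $e^{i,j}_{l,l'}=e^{j,i}_{l',l}$, the inclusion $\operatorname{Row}(A_i)\subseteq \operatorname{Row}(A_j)$ for $i\leq j$ provided by the filtration, and the additional freedom in the $u_{i,l}$ components, which may absorb any stray terms that already lie in $\operatorname{Row}(A_i)$.
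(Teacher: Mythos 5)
Your overall strategy is the paper's: rewrite $S{}^t\vec v=0$ as the coupled system $A_iw_{i,l}=0$, ${}^tA_iu_{i,l}+J_\Gamma\sum e^{i,j}_{l,l'}w_{j,l'}=0$, exploit the nesting given by $A_r\prec A_s$, and produce a nonzero kernel vector from the singularity of odd-size alternating matrices in characteristic $2$. But the proposal does not actually close the difficulty it correctly identifies as ``the chief obstacle,'' and the one explicit construction you give (case (1)) fails. With $u_{i,l}=0$, $w_{i,l}=0$ for $i\neq\sigma$ and $w_{\sigma,l}=c_lx$, the equation indexed by $(i,l)$ with $i\neq\sigma$ reads $\bigl(\sum_{l'}e^{i,\sigma}_{l,l'}c_{l'}\bigr)J_{2n}x=0$. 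Since $J_{2n}$ is invertible and $x\neq 0$, this forces $\sum_{l'}e^{i,\sigma}_{l,l'}c_{l'}=0$ for every such $(i,l)$ --- a condition on $\vec c$, not on $x$, and the scalars $e^{i,\sigma}_{l,l'}$ are arbitrary while $\vec c$ is confined to $\ker\tilde E^{\sigma,\sigma}$, which may be one-dimensional. No ``clever choice of $x$ in $X_\sigma$'' can annihilate these terms, so the appeal to Lemma \ref{evendegSM} here is misplaced (in the paper that lemma is used in case (3), to satisfy the extra scalar constraint ${}^tP_\sigma{}^tW=0$ coming from the bordered row of $J_{2n+1}$ --- a constraint your sketch of case (3) does not address at all).

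What is missing is the actual mechanism for killing or absorbing the cross terms, which requires two separate devices used in the paper. For $i<\sigma$ one cannot absorb into $u_{i,l}$ (the row space of $A_i$ is the \emph{smaller} one); instead one must enlarge the alternating matrix to $E=(e^{r,s}_{i,j})_{r,s\in[1,\sigma]}$, observe that minimality of $\sigma$ forces all $k_r$ with $r<\sigma$ to be even so that $E$ has odd size and hence a nonzero kernel vector $W$, and set $w_{r,l}=W_{r,l}\,x$ for all $r\le\sigma$ (legitimate because $\ker A_\sigma\subseteq\ker A_r$). For $i>\sigma$ one absorbs the leftover scalars $d_{i,l}=\sum_{s\le\sigma,l'}e^{i,s}_{l,l'}W_{s,l'}$ into ${}^tA_iu_{i,l}$, but this only works if $J_\Gamma x$ lies in the row space of $A_\sigma$ (hence of $A_i$ by $A_\sigma\prec A_i$); that is exactly why the paper takes $x$ of the special form coming from $W'A_\sigma$ with $W'\in\ker(X_\sigma{}^tY_\sigma+Y_\sigma{}^tX_\sigma)$ rather than an arbitrary element of $\ker A_\sigma$. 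Your final paragraph names both ingredients in passing, but the constructions you actually propose use neither, so as written the argument has a genuine gap in all three cases.
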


\begin{proof}
Let $Z$ be a row vector of proper size, satisfying $S{^tZ} =0$. It is enough to show that $Z$ can be taken nonzero.

\noindent  (1) For $r \in [1,N]$ and $i \in [1,k_r]$, write $A_{ri} = (X_{ri},Y_{ri})$, where $X_{ri},Y_{ri} \in M_{m_r \times n} (\mathbf k)$; as $A_{r,i} =A_r$,
we can denote $X_{ri}$, $Y_{r,i}$ as $X_r$, $Y_r$ respectively.
Write $Z=(Z_1, \cdots, Z_N)$, with $$Z_r=(T_{r1},U_{r1},V_{r1}, \cdots, T_{r,k_r},
U_{r,k_r},V_{r,k_r}),$$ where $T_{ri} \in \mathbf k^{m_r}, U_{ri},V_{ri} \in \mathbf k^{n}$ are row vectors, for $r \in [1,N]$ and $i \in [1,k_r]$. It is clear that, $S{^tZ}=0$ is equivalent to the following system of linear equations:
\begin{equation}\label{equC}
\left\{
\begin{array}{lcc}
X_{ri} {^tU_{ri}}+\displaystyle  Y_{ri} {^tV_{ri}}&=&0, \\
^t X_{ri} {^tT_{ri}}+\displaystyle \sum_{s,j} e^{r,s}_{i,j} ({^tV_{sj}})&=&0,  \\
^t Y_{ri} {^tT_{ri}}+\displaystyle \sum_{s,j} e^{r,s}_{i,j} ({^tU_{sj}})&=&0,
\end{array}\right.
\end{equation}
for $r \in [1,N]$ and $i \in [1,k_r]$.
Without loss of generality, in the following we assume that $\sigma$ is the minimal integer such that $k_{\sigma}$ is odd and $m_{\sigma}< 2n$ .

We let
$$E = \begin{pmatrix}
E^{1,1} & \cdots & E^{1,\sigma} \\
\cdots &\cdots & \cdots \\
E^{\sigma,1} & \cdots & E^{\sigma,\sigma}
\end{pmatrix},$$
where
$$E^{i,j} = \begin{pmatrix}
e^{i,j}_{1,1} & \cdots & e^{i,j}_{1,k_j} \\
\cdots & \cdots & \cdots \\
e^{i,j}_{k_i,1} & \cdots & e^{i,j}_{k_i,k_j}
\end{pmatrix},$$
for $i,j \in [1,\sigma]$ and $e^{i,i}_{l,l}=0$ for $i\in [1, \sigma], l\in [1,k_i]$.
Since $m_{\sigma}<2n$,
there exist row vectors $U_{\sigma}, V_{\sigma} \in \mathbf k^n$, not both $0$, such that $X_{\sigma}{}^t U_{\sigma}+ Y_{\sigma} {}^tV_{\sigma}=0$.
Since $k_{\sigma}$ is odd, $E$ is a symmetric matrix of odd degree with diagonal elements zero. As $\operatorname{ch}\mathbf k =2$, there exists a row vector
$$W =(w_{11}, \cdots, w_{1,k_1}, \cdots, w_{\sigma,1}, \cdots, w_{\sigma,k_{\sigma}}) \in \mathbf k^{k_1 + \cdots +k_{\sigma}},$$
satisfying $E{^tW}=0$.

Now we set
\begin{eqnarray}
\begin{array}{ll}
 T_{r,i} = 0 & \text{for }r \in [1,N], i \in [1,k_r],  \\
 U_{r,i} = V_{r,i}=0 & \text{for } r \in [\sigma+1, N], i \in [1,k_r], \\
 U_{\sigma,i} = U_{\sigma}, V_{\sigma,i} =V_{\sigma} & \text{for } i \in [1,k_{\sigma}], \\
 U_{r,i} = w_{r,i}U_{\sigma}, V_{r,i} = w_{r,i} V_{\sigma} & \text{for } r \in [1,\sigma-1], i \in [1,k_r].
 \end{array} \nonumber
\end{eqnarray}
The vector $Z$ thus obtained satisfies $Z \neq 0$ and $S{^tZ}=0$.

\medskip
\noindent (2) Completely as in (1), we get the equivalence between $S{^tZ}=0$ and the system of linear equations (\ref{equC}).
Since $ X_{\sigma} {}^t Y_{\sigma}+ Y_{\sigma} {}^t X_{\sigma} $ is a symmetric matrix with diagonals zero, and its rank $m_{\sigma}$ is odd, there exists a row vector $W \in \mathbf k^{m_{\sigma}}$ satisfying $W \neq 0$ and $$(X_{\sigma} {}^t Y_{\sigma}+ Y_{\sigma} {}^t X_{\sigma}){^tW} =0.$$
We set $$U_{\sigma,i} = U_{\sigma} = W {Y_{\sigma}}\  \text{and} \  V_{\sigma,i}= V_{\sigma}= W{X_{\sigma}} \ \text{for} \ i \in [1,k_{\sigma}].$$

If $U_{\sigma} = V_{\sigma}=0$, then set
\begin{eqnarray}
\begin{array}{ll}
 T_{r,i} = 0 & \text{for }r \in [1,N], r \neq \sigma, i \in [1,k_r],  \\
 T_{\sigma,i} = W & \text{for } i \in [1,k_{\sigma}], \\
 U_{r,i} = V_{r,i} =0  & \text{for } r \in [1, N], i \in [1,k_r].
 \end{array} \nonumber
\end{eqnarray}
The matrix $Z$ thus obtained is nonzero and satisfies $S{^tZ}=0$.

If $U_{\sigma} \neq 0$ or $V_{\sigma} \neq 0$, then put
$$E = \begin{pmatrix}
E^{1,1} & \cdots & E^{1,\sigma} \\
\cdots & \cdots & \cdots \\
E^{\sigma-1,1} & \cdots & E^{\sigma-1,\sigma}
\end{pmatrix},$$
where
$$E^{i,j} = \begin{pmatrix}
e^{i,j}_{1,1} & \cdots & e^{i,j}_{1,k_j} \\
\cdots & \cdots & \cdots \\
e^{i,j}_{k_i,1} & \cdots & e^{i,j}_{k_i,k_j}
\end{pmatrix}$$
for $i \in [1,\sigma-1]$ and $j \in [1, \sigma]$.
Since $E$ is not of full column rank, there exists
$$W'=(w_{1,1}, \cdots, w_{1,k_1}, \cdots, w_{\sigma,1}, \cdots, w_{\sigma,k_{\sigma}}) \in \mathbf k^{k_1+ \cdots + k_{\sigma}}$$ satisfying $W' \neq 0$ and $E{^tW'}=0$.

Let
$d_{r,i} = \sum_{s \in [1,\sigma],j \in [1,k_s]}e^{r,s}_{i,j}w_{s,j}$
for $r \in [\sigma, N]$ and $i \in [1,k_r]$.
For $r \in [\sigma, N]$, since $X_{\sigma} \prec X_r$, through adding a certain number of $0$'s in the end of $W$, we can get a vector $W'' \in \mathbf k^{m_r}$ satisfying
$${^tX_r}{^tW''} + {^tX_{\sigma}}{^tW} = 0.$$
Set
\begin{eqnarray}
\begin{array}{ll}
 T_{r,i} = 0 & \text{for }r \in [1,\sigma-1], i \in [1,k_r],  \\
 T_{r,i} = d_{r,i}W'' & \text{for } r \in [\sigma,N], i \in [1,k_r], \\
 U_{r,i} = V_{r,i} =0  & \text{for } r \in [\sigma+1, N], i \in [1,k_r], \\
 U_{r,i} = w_{r,i}U_{\sigma}, V_{r,i} = w_{r,i}V_{\sigma}, & \text{for }
 r \in [1, \sigma], i \in [1,k_r].
 \end{array} \nonumber
\end{eqnarray}
The matrix $Z$ thus obtained satisfies $Z \neq 0$ and $S{^tZ}=0$

\medskip
\noindent (3) For $r \in [1,N]$ and $i \in [1,k_r]$, write $A_{ri} = (P_{ri},X_{ri},Y_{ri})$, where $X_{ri},Y_{ri} \in
M_{m_r \times n} (\mathbf k)$, $P_{ri} \in M_{m_r \times 1}(\mathbf k)$; as $A_{r,i} =A_r$,
we can denote $X_{r,i}$, $Y_{r,i}$, $P_{r,i}$ as $X_r$, $Y_r$, $P_r$ respectively.
Write $Z=(Z_1, \cdots, Z_N)$, with $$Z_r=(T_{r1},\chi_{r1}, U_{r1},V_{r1}, \cdots, T_{r,k_r}, \chi_{r,k_r}, U_{r,k_r},V_{r,k_r}),$$ where $T_{ri} \in \mathbf k^{m_r}, U_{ri},V_{ri} \in \mathbf k^{n}$ are row vectors, and $\chi_{ri} \in \mathbf k$ for $r \in [1,N]$ and $i \in [1,k_r]$. It is clear that, $S{^tZ}=0$ is equivalent to the following system of linear equations:

\begin{equation}\label{equB}
\left\{
\begin{array}{ll}
\chi_{ri}{^tP_{r,i}}+ X_{ri}{^t U_{ri}}+Y_{ri} {^tV_{ri}}=0 \\
^t X_{ri} {^tT_{ri}}+\displaystyle  \sum_{s,j} e^{r,s}_{i,j} ({^tV_{sj}})=0  \\
^t Y_{ri} {^tT_{ri}}+\displaystyle  \sum_{s,j} e^{r,s}_{i,j} ({^tU_{sj}})=0  \\
{}^t P_{ri} {^tT_{ri}}= 0
\end{array}\right.
\end{equation}
for $r \in [1,N]$ and $i \in [1,k_r]$.

If $X_{\sigma} {}^t Y_{\sigma}+ Y_{\sigma} {}^t X_{\sigma}$ is invertible, we set
$$\eta_{\sigma}= (X_{\sigma} {}^t Y_{\sigma}+ Y_{\sigma} {}^t X_{\sigma})^{-1} {^t Z_{\sigma}}.$$
We let $U_{\sigma}={}^t \eta_{\sigma} Y_{\sigma}   $ and $V_{\sigma}={}^t \eta_{\sigma} X_{\sigma} $.
Since $(X_{\sigma} {}^t Y_{\sigma}+ Y_{\sigma} {}^t X_{\sigma})^{-1}$ is a symmetric matrix with diagonal elements zero, it is easy to check that $Z_{\sigma} \eta_{\sigma} =0$.
As in the proof of (2), we put
$$E = \begin{pmatrix}
E^{1,1} & \cdots & E^{1,\sigma} \\
\cdots & \cdots & \cdots \\
E^{\sigma-1,1} & \cdots & E^{\sigma-1,\sigma}
\end{pmatrix},$$
where
$$E^{i,j} = \begin{pmatrix}
e^{i,j}_{1,1} & \cdots & e^{i,j}_{1,k_j} \\
\cdots & \cdots & \cdots \\
e^{i,j}_{k_i,1} & \cdots & e^{i,j}_{k_i,k_j}
\end{pmatrix}$$
for $i \in [1,\sigma-1]$ and $j \in [1, \sigma]$.
Since $E$ is not of full column rank, there exists
$$W'=(w_{1,1}, \cdots, w_{1,k_1}, \cdots, w_{\sigma,1}, \cdots, w_{\sigma,k_{\sigma}}) \in \mathbf k^{k_1+ \cdots + k_{\sigma}}$$ satisfying $W' \neq 0$ and $E  {^tW'}=0$.
Let
$d_{r,i} = \sum_{s \in [1,\sigma],j \in [1,k_s]}e^{r,s}_{i,j}w_{s,j}$
for $r \in [\sigma, N]$ and $i \in [1,k_r]$.
For $r \in [\sigma, N]$, since $X_{\sigma} \prec X_r$, through adding a certain number of $0$'s in the end of $\eta_{\sigma}$, we can get a row vector $W'' \in \mathbf k^{m_r}$ satisfying
$$ {^tX_{\sigma}} \eta_{\sigma} = {^tX_r}{^t W''} .$$
Set
\begin{eqnarray}
\begin{array}{ll}
\chi_{r,i} = w_{r,i} &\text{for } r \in [1, \sigma], i \in [1,k_r]\\
\chi_{r,i} = 0 & \text{for } r \in [\sigma+1, N], i \in [1,k_r]\\
U_{r,i} = w_{r,i}U_{\sigma}, V_{r,i} = w_{r,i}V_{\sigma}
& \text{for } r \in [1,\sigma], i \in [1,k_r], \\
U_{r,i} = V_{r,i} = 0 &  \text{for } r \in [\sigma+1,N], i \in [1,k_r], \\
T_{r,i} =0 & \text{for } r \in [1,\sigma-1], i \in [1,k_r], \\
T_{r,i} = d_{r,i}{W''} & \text{for } r \in [\sigma,N], i \in [1,k_r].
\end{array} \nonumber
\end{eqnarray}
The matrix $Z$ thus obtained satisfies $Z \neq 0$ and $S{^tZ}=0$.

If $X_{\sigma} {}^t Y_{\sigma}+ Y_{\sigma} {}^t X_{\sigma}$ is not invertible,
then the space
$$\{W \in  {\mathbf k}^{m_{\sigma}}  \mid (X_{\sigma} {}^t Y_{\sigma}+ Y_{\sigma} {}^t X_{\sigma}) {^t W} =0\}$$
is of dimension $\geq 2$ by Lemma \ref{evendegSM}, as $m_{\sigma}$ is even. Since $\text{char}~{\mathbf k}= 2$, we can find a nonzero row vector $W_\sigma$ such that
$$(X_{\sigma} {}^t Y_{\sigma}+ Y_{\sigma} {}^t X_{\sigma}) {^t W_\sigma} =0 \ \ \text{and} \ \ {}^t P_{\sigma}  {^t W_\sigma}  =0.$$ We set each $\chi_{ri}=0$,
then the proof follows exactly as the proof of (2).
Therefore the proposition is proved.

\end{proof}

\section{Type $A$}

\subsection{Odd weighted Dynkin diagram  }
In this section we deal with the type $A$ case. In this case all weighted Dynkin diagrams are special and thus it is enough  to prove  that  $\mathfrak g_{\mathbb Z,d}(1) =\{0\}$, or there exists a non-degenerate $\lambda \in \operatorname{Hom}_{\mathbb Z}( \mathfrak g_{\mathbb Z,d}(2),\mathbb Z)$ for each $d \in \Delta$.
Now assume that the simple Lie algebra $\mathfrak g$ is of type $A_n$ and all the simple roots are $\alpha_1,\alpha_2,\dots, \alpha_n$ and thus all positive roots can be denoted by  $\varepsilon_{i, j}=\alpha_i +\alpha_{i+1}+\dots+ \alpha_j$ with $1\leq i \leq  j\leq n$.

%

We can identify the  weighted Dynkin diagram $d\in \Delta^{o}$ 
with the sequence $$(i_r=0, i_{r-1}, i_{r-2},\cdots, i_1, i'_1,\cdots, i'_{r-2},i'_{r-1}, i'_r=n+1 )$$ of integers, where $\{i_{l},i'_{l} \mid l\in [1,r-1]\}=\{i\in [1,n] \mid d(\alpha_i)=1\}$,
satisfying the following conditions (1)-(3):
\begin{enumerate}
\item $1 \leq i_{r-1} <i_{r-2}< \cdots <i_1 <i' _1 <\cdots <i'_{r-2}< i'_{r-1} \leq n$,
\item $s_l=s'_l$ for $l=2,3,\cdots,r$,
\item $s_{l+2}\leq s_l$ for $1\leq l\leq r-2$,
\end{enumerate}
where $s_l=i_{l-1}-i_{l}$, $s'_l=i'_{l}-i'_{l-1}$ for $l=2,3,\cdots,r$ and $s_1=i'_1-i_1$.

\smallskip

In fact, suppose that $\mu=(1^{m_1}2^{m_2}\dots r^{m_r}) \in \mathscr P_{n+1}$, where $m_r\geq 1$, gives the weighted Dynkin diagram $d\in \Delta$ as above.
According to the discussion in Section \ref{reduction}, it is easy to see that $d\in \Delta^o$ if and only if  $m_{r-1}\geq 1$ in the partition $\mu$.

For this partition $\mu=(1^{m_1}2^{m_2}\dots r^{m_r})$, we set
\begin{equation*}a_l= \begin{cases}
m_l+m_{l+2}+\dots +m_r & \text{ if} \ r\equiv l
, \\
m_l+m_{l+2}+\dots +m_{r-1}  & \text{ if} \ r-1\equiv l 
.
\end{cases}
\end{equation*}
Then the sequence of integers $$(i_r=0, i_{r-1}, i_{r-2},\cdots, i_1, i'_1,\cdots, i'_{r-2},i'_{r-1}, i'_r=n+1 )$$
corresponding to the  weighted Dynkin diagram $d\in \Delta^{o}$ associated to $\mu$ satisfies that
$s_l=s'_l=a_l$ for $l=2,3,\dots, n$ and $s_1=a_1$. Using this idea, we can also identify a weighted Dynkin diagram with certain sequence of integers when we consider the type $B,C$ and $D$ cases later.

\subsection{Blocks of Gram matrix}
\setcounter{theorem}{2}
In order to prove Theorem \ref{mainThm2}, we have to look for a non-degenerate $\lambda \in \operatorname{Hom}_{\mathbb Z}(\mathfrak g_{\mathbb Z,d}(2), \mathbb Z)$. Our idea is to divide the set $\Phi_{d,1}$ into disjoint subsets and consider the submatrices of the Gram matrix corresponding to these subsets.
For the construction of equivalence classes, we define maps between subsets of $\Phi_{d,1}$.

Write $(i_r, i_{r-1}, \cdots, i_1,i_1', \cdots, i'_{r-1},i'_r)$ as
$(j_1,j_2, \cdots, j_r,j_{r+1}, \cdots, j_{2r-1},j_{2r})$. For $k \in [2,2r-2]$, we put
$$\mathcal{X}_k = \{ a\in [1,n] \ | \  j_{k-1}<a \leq j_k \ \text{and}\  a > j_k+j_{k+1}-j_{k+2}  \}, $$
and let $\varphi_k (a)= j_k+j_{k+1}-a $ for $a\in \mathcal{X}_k$. We set
$$X_k = \{ \varepsilon_{ab} | j_{k-1}<a \leq j_k \leq b <j_{k+1},
\ a \in  \mathcal{X}_k  \} \subseteq \Phi_{d,1},$$
and define the "right transformation":
$$\mathscr R_k: X_k \rightarrow \Phi_{d,1}, \varepsilon_{ab} \mapsto \varepsilon_{b+1,\varphi_k (a)}$$
for $k \in [2,2r-2]$.
Put
$$\mathcal{X}= \bigsqcup_{k \in [2,2r-2]} \mathcal{X}_k, \quad X = \bigsqcup_{k \in [2,2r-2]} X_k.$$
Thus we can define
$\varphi: \mathcal{X} \rightarrow  [1, n]$ sending $a\in \mathcal{X}_k$ to $\varphi_k (a)$ and a map
$\mathscr R: X \rightarrow  \Phi_{d,1} ,$
sending $\varepsilon_{ab} \in X_k$ to $\mathscr R_k(\varepsilon_{ab})$
for $k \in [2,2r-2]$.

Similarly, we put
$$\mathcal{Y}_k = \{ b\in [1,n] \ | \  j_{k}<b \leq j_{k+1} \ \text{and}\  b < j_k+j_{k-1}-j_{k-2}   \}, $$
for $k \in [2,2r-2]$, and let $\psi_k (b)= j_k+j_{k-1}-b $ for $b\in \mathcal{Y}_k$.  We set
$$Y_k = \{ \varepsilon_{ab} | j_{k-1}<a \leq j_k \leq b <j_{k+1},
\ b \in \mathcal{Y}_k  \} \subseteq \Phi_{d,1}$$
and define the "left transformation":
$$\mathscr L_k: Y_k \rightarrow \Phi_{d,1}, \varepsilon_{ab} \mapsto \varepsilon_{\psi_k (b),
a-1},$$ for $k \in [2,2r-2]$.
Put
$$\mathcal{Y}= \bigsqcup_{k \in [2,2r-2]} \mathcal{Y}_k, \quad Y = \bigsqcup_{k \in [2,2r-2]} Y_k.$$
Thus we can define
$\psi: \mathcal{Y} \rightarrow  [1, n]$ sending $b\in \mathcal{X}_k$ to $\psi_k (b)$ and a map
$\mathscr L: Y \rightarrow  \Phi_{d,1} ,$
sending $\varepsilon_{ab} \in Y_k$ to $\mathscr L_k(\varepsilon_{ab})$.

\smallskip
For $\varepsilon_{ab}  \in  \Phi_{d,1}$, if $b< i'_1= j_{r+1}$, then $\varepsilon_{ab}  \in X$. On the other hand for $\varepsilon_{ab}  \in  \Phi_{d,1}$,  if $a> i_1=j_{r}$, then $\varepsilon_{ab}  \in Y$. So we see that $X\cup Y =  \Phi_{d,1}$. Let $\tau: [1,n] \rightarrow [1,n]$, $a \mapsto n+1-a$ be an involution on $[1,n]$. By symmetry, there exists a $1-1$ correspondence between $X$ and $Y$, corresponding $\varepsilon_{ab} \in X$ to $\varepsilon_{\tau(b)\tau(a)} \in Y$.

Let $\varepsilon_{ab} \in X \backslash Y$  and we define  the $\mathscr{R}$-orbit of $\varepsilon_{ab}$
$$X_{ab} = \{ \varepsilon_{ab}, \mathscr R(\varepsilon_{ab}),
\cdots, \mathscr R^{m-1}(\varepsilon_{ab}) \},$$
where $m \in \mathbb N$ is maximal with $\mathscr R^{m-1}(\varepsilon_{ab}) \in X$. It  can be written as
$$\varepsilon_{a, b} \stackrel{\mathscr{R}}{\longrightarrow}  \varepsilon_{b+1, \varphi(a)}
\stackrel{\mathscr{R}}{\longrightarrow}  \varepsilon_{\varphi(a)+1, \varphi(b)-1}
\stackrel{\mathscr{R}}{\longrightarrow}  \varepsilon_{\varphi(b) , \varphi^2(a)-1}
\stackrel{\mathscr{R}}{\longrightarrow} \varepsilon_{ \varphi^2(a),  \varphi^2(b)}
\stackrel{\mathscr{R}}{\longrightarrow} \dots $$
Similarly for $\varepsilon_{ab} \in Y \backslash X$, we also define  the $\mathscr{L}$-orbit of $\varepsilon_{ab}$
$$Y_{ab} = \{ \varepsilon_{ab}, \mathscr L(\varepsilon_{ab}),
\cdots, \mathscr L^{l-1}(\varepsilon_{ab}) \},$$
where $l \in \mathbb N$ is maximal with $\mathscr R^{l-1}(\varepsilon_{ab}) \in Y$. It  can be written as
$$\varepsilon_{a, b} \stackrel{\mathscr{L}}{\longrightarrow}  \varepsilon_{\psi (b), a-1}
\stackrel{\mathscr{L}}{\longrightarrow}  \varepsilon_{\psi (a)+1, \psi (b)-1}
\stackrel{\mathscr{L}}{\longrightarrow}  \varepsilon_{\psi^2 (b)+1, \psi (a)}
\stackrel{\mathscr{L}}{\longrightarrow} \varepsilon_{\psi^2 (a), \psi^2 (b)}
\stackrel{\mathscr{L}}{\longrightarrow} \dots $$

By symmetry we see that $X_{ab} = Y_{\tau(b)\tau(a)}$ or $X_{ab} \cap Y_{\tau(b)\tau(a)} = \emptyset$.
For convenience, we set
\begin{eqnarray}
\Omega^0 & = & \{(a, b) \in [1, n] \times [1,n]\mid \varepsilon_{ab} \in X \backslash Y, \  X_{ab} = Y_{\tau(b)\tau(a)} \}, \nonumber \\
\Omega^1 & = & \{(a, b) \in [1, n] \times [1,n]\mid \varepsilon_{ab} \in X \backslash Y, \  X_{ab} \cap Y_{\tau(b)\tau(a)} = \emptyset \ \text{and} \ |X_{ab}| \ \text{is odd} \ \}, \nonumber \\
\Omega^2 & = & \{(a, b) \in [1, n] \times [1,n]\mid \varepsilon_{ab} \in X \backslash Y, \  X_{ab} \cap Y_{\tau(b)\tau(a)} = \emptyset\ \text{and} \ |X_{ab}| \ \text{is even} \ \}. \nonumber
\end{eqnarray}
We put $M_{ab} = X_{ab} \cup Y_{\tau(b)\tau(a)}$ and set $\Omega= \Omega^0 \cup \Omega^1 \cup \Omega^2$.
It is noticed that $\Phi_{d,1} = X \cup Y$. Then we can divide $\Phi_{d,1}$ as
$$\Phi_{d,1} = \bigsqcup_{(a, b) \in \Omega} M_{ab} $$

\begin{lemma} \label{link}
Let $(a, b)\in \Omega^1$, which implies that $|X_{ab}|=m$ is odd. Then there exists a unique integer $s$ such that
$$\mathscr{R}^{m-1}(\varepsilon_{ab})+ \mathscr{L}^s(\varepsilon_{\tau(b)\tau(a)})\in \Phi_{d,2}.$$
Moreover such  integer $s$  is  even.
\end{lemma}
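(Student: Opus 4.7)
The plan is to exploit the symmetry $\tau$ to reduce the statement to an assertion purely about the $\mathscr{R}$-orbit. First I would verify the intertwining identity
\[
\mathscr{L}\circ\tau=\tau\circ\mathscr{R}
\]
on (the appropriate parts of) $X$, using the formulas $\varphi_k(a)=j_k+j_{k+1}-a$ and $\psi_k(b)=j_k+j_{k-1}-b$ together with the palindromic identity $j_{2r+1-k}=n+1-j_k$, which follows from the condition $s_l=s'_l$ for $l\geq 2$ on the weighted Dynkin diagram. Iterating this gives
\[
\mathscr{L}^s(\varepsilon_{\tau(b)\tau(a)})=\tau\bigl(\mathscr{R}^s(\varepsilon_{ab})\bigr)\qquad\text{for all }s\in[0,m-1].
\]
Writing $\mathscr{R}^{m-1}(\varepsilon_{ab})=\varepsilon_{p,q}$ and $\mathscr{R}^s(\varepsilon_{ab})=\varepsilon_{p_s,q_s}$, the sum of interest becomes $\varepsilon_{p,q}+\varepsilon_{n+1-q_s,\,n+1-p_s}$, which in type $A$ lies in $\Phi$ iff either $q+q_s=n$ (with sum $\varepsilon_{p,\,n+1-p_s}$) or $p+p_s=n+2$ (with sum $\varepsilon_{n+1-q_s,\,q}$).

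Next I would analyze the termination of $X_{ab}$. Since $\mathscr{R}^{m-1}(\varepsilon_{ab})\in X_k$ for some $k$ but $\mathscr{R}^m(\varepsilon_{ab})\notin X$, the defining inequality for $\mathcal{X}_{k+1}$ (namely $a'>j_{k+1}+j_{k+2}-j_{k+3}$) fails at the next step. Rewriting this failure as a boundary identity yields precisely one of the two adjacency equations above, for a unique $s\in[0,m-1]$, giving both existence and uniqueness. For the weight condition, the interval of the resulting sum is bounded on one side by the position at which the orbit terminates and on the other by the symmetric endpoint under $\tau$, so it contains exactly two consecutive special indices $j_{k+1},j_{k+3}$ (or their $\tau$-images) from the sequence $(j_l)$ and no others; hence the sum lies in $\Phi_{d,2}$. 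The hypothesis $X_{ab}\cap Y_{\tau(b)\tau(a)}=\emptyset$ is used to rule out the alternative adjacency case, which would either duplicate the solution or produce a root of the wrong weight.

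Finally, for the parity of $s$, I would track $(p_s,q_s)$ through the four residues of $s\bmod 4$ using the explicit recursion induced by $\varphi$. Since $m-1$ is even, the endpoint coordinates $(p,q)=\bigl(\varphi^{(m-1)/2}(a),\varphi^{(m-1)/2}(b)\bigr)$ (up to the standard $\pm 1$ corrections) are of the ``even-phase'' type; matching against the admissible forms of $(p_s,q_s)$ compatible with the selected adjacency equation forces $s$ to be even. The main technical obstacle will be the detailed case analysis through the four phases of the orbit, and in particular showing that the termination failure of $X_{ab}$ always matches the adjacency equation belonging to the parity class of $m-1$, so that the unique $s$ produced is indeed even rather than odd.
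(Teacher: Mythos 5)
Your reduction via the intertwining $\mathscr L\circ\tau=\tau\circ\mathscr R$ and the palindromic identity $j_{2r+1-k}=n+1-j_k$ is correct, and it is exactly how the paper's proof begins (it records, ``by symmetry,'' that $\varepsilon_{\tau(b_k)\tau(a_k)}=\mathscr L^{k-1}(\varepsilon_{\tau(b)\tau(a)})$); your adjacency criterion $q+q_s=n$ or $p+p_s=n+2$ is also the right reformulation of the conclusion. (The separate verification that the sum has weight $2$ is unnecessary: $d$ is additive on sums of roots and both summands lie in $\Phi_{d,1}$, so any root obtained as their sum automatically lies in $\Phi_{d,2}$.)

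The gap is in the central existence step. The failure of membership in $\mathcal X$ at $\mathscr R^{m}(\varepsilon_{ab})$ only gives an inequality $a'\le j_{k+1}+j_{k+2}-j_{k+3}$; it is not a ``boundary identity'' and does not by itself produce either adjacency equation, so neither existence nor uniqueness of $s$ follows from it. What is actually needed --- and what the paper supplies --- is to locate the terminal root $\varepsilon_{a_m,b_m}$ relative to the markers, $i'_{l-1}<a_m<i'_l\le b_m<i'_{l+1}$, and to prove that $l$ is \emph{odd}: the paper argues that if $l$ were even the mirrored orbit would close up onto $\varepsilon_{a,b}$, forcing $|X_{ab}|$ to be even, contradicting $(a,b)\in\Omega^1$. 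Only once this parity of $l$ is known can one exhibit the unique element of the $\mathscr L$-orbit abutting $\varepsilon_{a_m,b_m}$, and the evenness of $s$ then drops out of the same bookkeeping ($m-1-s=2y$, so $s\equiv m-1\equiv 0$). Your plan defers exactly these points (your ``main technical obstacle''), and it assigns the hypotheses the wrong roles: the disjointness $X_{ab}\cap Y_{\tau(b)\tau(a)}=\emptyset$ and the oddness of $m$ are what force the terminal interval to have the correct parity, not a device for ruling out a second adjacency afterwards. So while the framework coincides with the paper's, the argument as proposed does not yet establish existence, uniqueness, or the evenness of $s$.
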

\begin{proof}
Denote $\varepsilon_{a_k, b_k}=\mathscr{R}^{k-1}(\varepsilon_{a, b})$ for $k=1,2,\dots,m$.
By symmetry it is easy to see that
$$\varepsilon_{\tau(b_k), \tau(a_k)}=\mathscr{L}^{k-1}(\varepsilon_{\tau(b)\tau(a)}).$$
 Note that there exists an integer $l$ such that $$i'_{l-1}< a_m< i'_l \leq b_m  < i'_{l+1}.$$
 So we have $$j_{r+l-1}< a_m< j_{r+l} \leq b_m  < j_{r+l+1}.$$
 We will show that $l$ is an odd integer. Now suppose that $l$ is even,  then $\tau(a_m)= \psi^x(a_m)$ for some odd integer $x$. We consider the $\mathscr{L}$-orbit of $\varepsilon_{a_m, b_m}$ which is
$$\varepsilon_{a_m, b_m} \stackrel{\mathscr{L}}{\longrightarrow}  \varepsilon_{\psi (b_m), a_m-1}
\stackrel{\mathscr{L}}{\longrightarrow}  \varepsilon_{\psi (a_m)+1, \psi (b_m)-1}
\stackrel{\mathscr{L}}{\longrightarrow}  \varepsilon_{\psi^2 (b_m)+1, \psi (a_m)}
\stackrel{\mathscr{L}}{\longrightarrow} \varepsilon_{\psi^2 (a_m), \psi^2 (b_m)}
\stackrel{\mathscr{L}}{\longrightarrow} \dots. $$
Then $\varepsilon_{\psi^{x+1} (b_m)+1, \psi^{x} (a_m)}$ must be in this orbit.
Since $\varepsilon_{a_m, b_m}$ is not in $X$, $a_m$ is not in $\mathcal{X}_{r+l}$.
So $\tau(a_m)=\psi^x(a_m)$ is not in $Y_{l}$ .
 Therefore we have $$\varepsilon_{\psi^{x+1} (b_m)+1, \psi^{x} (a_m)}= \varepsilon_{a,b},$$
which implies that the  cardinality of the $\mathscr{R}$-orbit of $\varepsilon_{ab}$ is even. Thus we get a contradiction.
So the integer $l$ is odd and $\tau(b_m)= \psi^y(b_m)$ for some odd integer $y$.
It is not difficult to see that $$\varepsilon_{\varphi^y(\tau(b_m)), \varphi^{y+1}(\tau(a_m))-1 }= \varepsilon_{b_m, \varphi^{y+1}(\tau(a_m))-1 }$$ is in the $\mathscr{R}$-orbit of $\varepsilon_{\tau(b_m),\tau(a_m)}$. Note that the $\mathscr{R}$-orbit of $\varepsilon_{\tau(b_m),\tau(a_m)}$ is the same as $\mathscr{L}$-orbit of $\varepsilon_{\tau(b),\tau(a)}$.
 Therefore there exists a unique integer $s$ such that
 $$\mathscr{L}^s(\varepsilon_{\tau(b)\tau(a)}) =\varepsilon_{b_m, \varphi^{y+1}(\tau(a_m))-1 }.$$
So for such integer $s$ we have
$$\mathscr{R}^{m-1}(\varepsilon_{ab})+ \mathscr{L}^s(\varepsilon_{\tau(b)\tau(a)})\in \Phi_{d,2}.$$ Moreover it is not difficult to see that $m-1-s=2y$ which is even. 
As $m$ is odd, $s$ must be even.
\end{proof}

For $(a, b) \in \Omega^1$, we denote
$$\mathscr{R}^{m-1}(\varepsilon_{ab})+ \mathscr{L}^s(\varepsilon_{\tau(b)\tau(a)})\in \Phi_{d,2}$$
in Lemma \ref{link} as $\varpi_{ab}$. 
It is noticed that $\varpi_{ab}$ and $\varpi_{a'b'}$ may be the same for $(a,b) \neq (a',b')$ in $\Omega^1$.

\setcounter{subsection}{3}
\subsection{Proof of Theorem \ref{mainThm2} for type $A$}

Now we give the proof of our main theorem for type $A$.
We put
\begin{eqnarray}
\Lambda_{ab} =
\begin{cases}
\{ \mathscr R^{k-1}(\varepsilon_{ab}) + \mathscr R^k(\varepsilon_{ab}) |
k \in [1, |X_{ab}|-1] \} & \text{ if } (a,b) \in \Omega^0, \\
\{ \mathscr R^{k-1}(\varepsilon_{ab}) + \mathscr R^k(\varepsilon_{ab}),
 \mathscr L^{k}(\varepsilon_{\tau(b)\tau(a)}) + \mathscr L^{k-1}(\varepsilon_{\tau(b)\tau(a)}) |
k \in [1, |X_{ab}|-1] \} \bigcup \{ \varpi_{ab} \} & \text{ if } (a,b) \in \Omega^1, \\
\{ \mathscr R^{k-1}(\varepsilon_{ab}) + \mathscr R^k(\varepsilon_{ab}),
 \mathscr L^{k}(\varepsilon_{\tau(b)\tau(a)}) + \mathscr L^{k-1}(\varepsilon_{\tau(b)\tau(a)}) |
k \in [1, |X_{ab}|-1] \}   & \text{ if } (a,b) \in \Omega^2,
\end{cases} \nonumber
\end{eqnarray}
and define $\lambda \in \operatorname{Hom}_{\mathbb Z}(\mathfrak g_{\mathbb Z,d}(2), \mathbb Z)$ by
$$\lambda(e_{\alpha}) =
\begin{cases}
1 & \text{ if }\alpha \in \mathop{\bigcup}\limits_{(a,b) \in \Omega} \Lambda_{ab}, \\
0 & \text{ if }\alpha \in \Phi_{d,2} \backslash \mathop{\bigcup}\limits_{(a,b) \in \Omega} \Lambda_{ab}.
\end{cases}$$
Then
$$\det \mathscr G_{d,\lambda} = \pm \prod_{(a,b) \in \Omega} \det \mathscr G_{ab},$$
where $\mathscr G_{ab} = (\lambda([e_{\alpha},e_{\beta}]))_{\alpha,\beta \in M_{ab}}$ for each $(a,b) \in \Omega$.

\medskip
When $(a,b) \in \Omega^0$, 
the cardinality $|X_{ab}|$ is even
by symmetry. Then it is easy to notice that $\det \mathscr G_{ab}\in \{\pm 1\}$. 
When $(a,b) \in \Omega^1$, we write down the matrix $\mathscr G_{ab}$ and do cofactor expansion of the
determinant $\det \mathscr G_{ab}$. Using the result  of Lemma \ref{link}, it is checked that $\det \mathscr G_{ab}\in \{\pm 1\}$.
When $(a,b) \in \Omega^2$, there are two cases.
In the first case, the only subdiagonal of the matrix $\mathscr G_{ab}$ is $\pm 1$,
then  the form of this matric $\mathscr G_{ab}$ is the same as  $\mathscr G_{cd}$
for $(c,d) \in \Omega^0$. In the second case, the form of the matrix
$\mathscr G_{ab}$ is the same as  $\mathscr G_{cd}$ for $(c,d) \in \Omega^1$.
In both cases, we know that $\det \mathscr G_{ab}\in \{\pm 1\}$.
Therefore the map $\lambda: {\mathfrak g}_{\mathbb{Z},d}(2)\rightarrow \mathbb{Z}$
we consider here satisfies that $\sigma_{\lambda}$ is non-degenerate over $\mathbb{Z}$.

\medskip
Therefore we have proved Theorem \ref{mainThm2} (1). 
As every weighted Dynkin diagram is special in the type $A$ case, 
Theorem \ref{mainThm2} has been proved in 
this case.

\section{Faithful map} \label{s_faithful_map}

\subsection{Odd weights of types $C$, $B$ and $D$}
\setcounter{theorem}{1}
Let  $\mathfrak g$ be the simple Lie algebra of type $C_n$ over $\mathbf k$. The weighted Dynkin diagram associated to the pairs of partition $(\mu, \nu)$ with $|\mu|+|\nu|=n$ where $\nu$ has distinct parts  are given in Section \ref{WDD}.
We can identify $\Delta^o$ with the set of sequences $(i_1, i_2,\cdots, i_k)$ of integers, satisfying the following conditions (1)-(4):
\begin{enumerate}
\item $1 \leq i_1<i_2< \cdots <i_k \leq n-1$,
\item $s_l \leq s_{l+2}$ for $l \in [1,k-2]$,
\item $s_l \equiv 0$ for $l \in [1,k]$ with $l \equiv k-1$,
\item $s_{k-1} \leq 2(n-i_k)$,
\end{enumerate}
where $s_1 =i_1$ and $s_l = i_l-i_{l-1}$ for $l \in [2,k]$,
through identifying $d \in \Delta^o$ with $(i_1,i_2,\cdots,i_k)$ where $\{i_l | l \in [1,k]\} = \{ i \in [1,n] | d(\alpha_i) =1\}$.

\medskip

Let $\mathfrak g$ be the simple Lie algebra of type $B_n$ over $\mathbf k$.
The set $\Delta$ of weighted Dynkin diagrams 
is given in Section \ref{WDD}.
It is clear that
we can identify $\Delta^o$ with the set of sequences $(i_1, i_2,\cdots, i_k)$ of integers, satisfying the following (1)-(4):
\begin{enumerate}
\item $1 \leq i_1<i_2< \cdots <i_k \leq n$,
\item $s_l \leq s_{l+2}$ for $l \in [1,k-2]$,
\item $s_l \equiv 0$ for $l \in [1,k]$ with $l \equiv k$,
\item $s_{k-1} \leq 2(n-i_k)+1$,
\end{enumerate}
where $s_1 =i_1$ and $s_l = i_l-i_{l-1}$ for $l \in [2,k]$,
through identifying $d \in \Delta^o$ with $(i_1,i_2,\cdots,i_k)$ where $\{i_l| l \in [1,k]\} = \{ i \in [1,n]| d(\alpha_i) =1\}$.

\medskip

Let  $\mathfrak g$ be the simple Lie algebra of type $D_n$ over $\mathbf k$.
The set $\Delta$ of weighted Dynkin diagrams is given in Section \ref{WDD}.
If $\mathfrak d$ is a sequence of elementary divisors defining odd weighted Dynkin diagrams, then the two Dynkin diagrams in (\ref{d_D1}) and (\ref{d_D2}) coincide.
Consequently, the weighted Dynkin diagrams of odd weights can be classified in the following two cases:
\begin{eqnarray} \label{d_D1_1}
\text{
\begin{picture}(40,10)(130,20)
\put(50,0){\circle{6}}
\put(90,0){\circle{6}}
\put(130,0){\circle{6}}
\put(135,0){$\dots$}
\put(150,0){$\dots$}
\put(165,0){$\dots$}
\put(180,0){$\dots$}
\put(200,0){\circle{6}}
\put(231,27){\circle{6}}
\put(231,-27){\circle{6}}
\put(36,-15){$\xi_1-\xi_2$}
\put(77,-15){$\xi_2-\xi_3$}
\put(160,-15){$\xi_{n-2}-\xi_{n-1}$}
\put(240,23){$1$}
\put(240,-29){$1$}
\put(53,0){\line(1,0){34}}
\put(93,0){\line(1,0){34}}
\put(203,1){\line(1,1){25}}
\put(203,-1){\line(1,-1){25}}
\end{picture}}
\end{eqnarray}
\vspace{20 mm}

\noindent and
\begin{eqnarray} \label{d_D2_1}
\text{
\begin{picture}(40,10)(130,20)
\put(50,0){\circle{6}}
\put(90,0){\circle{6}}
\put(130,0){\circle{6}}
\put(135,0){$\dots$}
\put(150,0){$\dots$}
\put(165,0){$\dots$}
\put(180,0){$\dots$}
\put(200,0){\circle{6}}
\put(231,27){\circle{6}}
\put(231,-27){\circle{6}}
\put(36,-15){$\xi_1-\xi_2$}
\put(77,-15){$\xi_2-\xi_3$}
\put(160,-15){$\xi_{n-2}-\xi_{n-1}$}
\put(240,23){$0$}
\put(240,-29){$0$}
\put(53,0){\line(1,0){34}}
\put(93,0){\line(1,0){34}}
\put(203,1){\line(1,1){25}}
\put(203,-1){\line(1,-1){25}}
\end{picture}}
\end{eqnarray}
\vspace{20 mm}

Thus we can write $\Delta^o = \Delta^o_1 \sqcup \Delta^o_2$, where $\Delta^o_1$ (resp. $\Delta^o_2$) consists of weighted Dynkin diagrams of the form (\ref{d_D1_1}) (resp. (\ref{d_D2_1})).

The weighted Dynkin diagram  associated to the partition $(2^{2m}1^2)$ is special, in this case it is not difficult to see that there exists a non-degenerate $\lambda \in \operatorname{Hom}_{\mathbb Z}( \mathfrak g_{\mathbb Z,d}(2),\mathbb Z)$. Thus we exclude this situation.

The set $\Delta^o_1$ can be identified with the set of sequences $(i_1, i_2,\cdots, i_k)$ of integers, satisfying the following conditions (1)-(4):
\begin{enumerate}
\item $1 \leq i_1<i_2< \cdots <i_k \leq n-3$,
\item $s_l \leq s_{l+2}$ for $l \in [1,k-1]$,
\item $s_l \equiv 0$ for $l \in [1,k+1]$ with $l \equiv k+1$,
\item $s_l\leq 2$ for $l \in [1,k]$ with $l \equiv k$.
\end{enumerate}
where $s_1 =i_1$ and $s_l = i_l-i_{l-1}$ for $l \in [2,k]$ and $s_{k+1}=n-1-i_k$,
through identifying 
$d \in \Delta^o_1$ with $(i_1,i_2,\cdots,i_k)$ where $\{i_l| l \in [1,k]\} = \{ i \in [1,n]| d(\alpha_i) =1\}$.

\medskip

The set $\Delta^o_2$ can be identified with the set of sequences $(i_1, i_2,\cdots, i_k)$ of integers, satisfying the following conditions (1)-(4):
\begin{enumerate}
\item $1 \leq i_1<i_2< \cdots <i_k \leq n-2$,
\item $s_l \leq s_{l+2}$ for $l \in [1,k-2]$,
\item $s_l \equiv 0$ for $l \in [1,k]$ with $l \equiv k$,
\item $s_{k-1} \leq 2(n-i_k)$.
\end{enumerate}
where $s_1 =i_1$ and $s_l = i_l-i_{l-1}$ for $l \in [2,k]$,
through identifying $d \in \Delta^o$ with $(i_1,i_2,\cdots,i_k)$ where $\{i_l| l \in [1,k]\} = \{ i \in [1,n]| d(\alpha_i) =1\}$.

\setcounter{theorem}{2}
\setcounter{equation}{0}
\subsection{Faithful maps} \label{ssfai}
For a fixed simple root system and an odd weighted Dynkin diagram $d =(i_1,i_2,\cdots,i_k) \in \Delta^o$, put
$$Y_{d,l} = \{ \alpha \in \Phi_{d,1} \mid [ \alpha : \alpha_{i_l}]=1 \},$$
for $l \in [1,k]$. Then $|Y_{d,l}| \leq | Y_{d,l+1}|$ for $l \in [1,k-1]$.
In this subsection, we fix $d =(i_1,i_2,\cdots, i_k) \in \Delta^o$, where $k \geq 3$, and $\lambda \in \operatorname{Hom}_{\mathbf k}(\mathfrak g_{d}(2), \mathbf k)$, or
$\lambda \in \operatorname{Hom}_{\mathbb Z}(\mathfrak g_{\mathbb Z, d}(2), \mathbb Z)$.
Set $s_1 = i_1$, and $s_j = i_j - i_{j-1}$ for $j \in [2,k]$.


Now we set
\begin{eqnarray}
\Omega_l = \{ \varepsilon_s- \varepsilon_t \in \Phi_{d,2} \mid t-s \leq i_{l+1} -(i_{l-1}+1)\},
\end{eqnarray}
for $l \in [1,k-2]$. If
$$\{ \gamma \in \Phi_{d,2} | [\gamma : \alpha_{i_l}]= [\gamma: \alpha_{i_{l+1}}]=1, \lambda(e_{\gamma}) \neq 0 \} \subseteq \Omega_l,$$
 for $l \in [1,k-2]$, then $\lambda$ is said to be \emph{faithful}.

If $\lambda$ is faithful, then put $Q_1 = Y_{d,1}$, and
\begin{eqnarray}
P_l & = & \{ \alpha \in Y_{d,l+1} \mid \text{there exists } \beta \in Q_l, \text{ such that } \beta + \alpha \in \Omega_l \}, \nonumber \\
Q_{l+1} & = & Y_{d,l+1} \backslash P_l, \nonumber \\
M_{l+1} & = & (\lambda([e_{\alpha},e_{\beta}]))_{\alpha \in Q_l,\beta \in P_l}, \nonumber
\end{eqnarray}
for $l \in [1,k-2]$.
Put
\begin{eqnarray} \label{e_MK}
M_k = (\lambda([e_{\alpha},e_{\beta}]))_{\alpha, \beta \in Q_{k-1} \cup Y_{d,k}}.
\end{eqnarray}

It is convenient to describe the sets $Q_1,P_1,\cdots, Q_{k-2},P_{k-2},Q_{k-1}$ through diagrams. For example, the set $Q_1 = Y_{d,1} = \{ \varepsilon_s- \varepsilon_t \mid s \in [1,i_1], t\in [i_1,i_2-1]\}$ is denoted as


\begin{center}

\begin{tabular}{lccccc}
    & $i_1$ & & & $i_2-1$ \\
    \cline{2-6}
$1$ & \multicolumn{5}{|c|}{\multirow{4}{*}{ }}\\
& \multicolumn{5}{|c|}{\multirow{4}{*}{ }}\\
& \multicolumn{5}{|c|}{\multirow{4}{*}{ }}\\
$i_1$ & \multicolumn{5}{|c|}{\multirow{4}{*}{ }}\\
\cline{2-6}
\end{tabular}
\end{center}

\noindent Then $Y_{d,2} = P_1 \sqcup Q_2$ is denoted as follows,

\begin{center}
\begin{tabular}{lcccccccccc}
    & $i_2$ & & & & $i_2+s_1-1$ & $i_2+s_1$ &  & & & $i_3-1$\\
    \cline{2-11}
$i_1+1$ & \multicolumn{5}{|c|}{\multirow{4}{*}{ $P_1$ }} & \multicolumn{5}{c|}{\multirow{4}{*}{ $Q_2$ }}\\
& \multicolumn{5}{|c|}{\multirow{4}{*}{ }} & \multicolumn{5}{c|}{\multirow{4}{*}{ }}\\
& \multicolumn{5}{|c|}{\multirow{4}{*}{ }} & \multicolumn{5}{c|}{\multirow{4}{*}{ }}\\
$i_2$ & \multicolumn{5}{|c|}{\multirow{4}{*}{ }} & \multicolumn{5}{c|}{\multirow{4}{*}{ }}\\
\cline{2-11}
\end{tabular}
\end{center}

\noindent  from which we can see that
\begin{eqnarray}
P_1 & = & \{ \varepsilon_s- \varepsilon_t \mid s \in [i_1+1,i_2], t \in [s_2,i_2+s_1-1]\}, \nonumber \\
Q_2 & = & \{ \varepsilon_s- \varepsilon_t \mid s \in [i_1+1,i_2], t \in [i_2+s_1,i_3-1]\}. \nonumber
\end{eqnarray}

In general, for $l \in [2,k-1]$, the partition of $Y_{d,l}$ into $P_{l-1} \sqcup Q_l$ is

\begin{center}
\resizebox{\textwidth}{20mm}{
\begin{tabular}{r cccc cccc ccccc cccc ccccc}
    & $i_l$ & & & &
    $i_l+s_1$  &  & & &
    $i_l+s_3$ &&  $\cdots$ &&&
    $i_l+s_{l-3}$ & &&&
    $i_l+s_{l-1}$ &&&& $i_{l+1}-1$ \\
    \cline{2-23}
$i_{l-1}+1$ & \multicolumn{4}{|c|}{\multirow{3}{*}{   }} &  \multicolumn{4}{c;{4pt/4pt}}{\multirow{3}{*}{  }} &
\multicolumn{5}{c;{4pt/4pt}}{\multirow{8}{*}{  }} &
\multicolumn{4}{c;{4pt/4pt}}{\multirow{9}{*}{ $Q_l$   }} &
\multicolumn{5}{c|}{\multirow{12}{*}{  }}
\\
& \multicolumn{4}{|c|}{\multirow{3}{*}{ }} &
\multicolumn{4}{c;{4pt/4pt}}{\multirow{3}{*}{  }} &
\multicolumn{5}{c;{4pt/4pt}}{\multirow{8}{*}{  }} &
\multicolumn{4}{c;{4pt/4pt}}{\multirow{9}{*}{  }} &
\multicolumn{5}{c|}{\multirow{12}{*}{  }}
\\
& \multicolumn{4}{|c|}{\multirow{3}{*}{ }} &
\multicolumn{4}{c;{4pt/4pt}}{\multirow{3}{*}{  }} &
\multicolumn{5}{c;{4pt/4pt}}{\multirow{8}{*}{  }} &
\multicolumn{4}{c;{4pt/4pt}}{\multirow{9}{*}{  }} &
\multicolumn{5}{c|}{\multirow{12}{*}{  }}
\\
\cdashline{2-5}\cline{6-9}
$i_{l-1}+s_2+1$ & \multicolumn{8}{|c|}{\multirow{3}{*}{ }} & \multicolumn{5}{c;{4pt/4pt}}{\multirow{8}{*}{ }} &
\multicolumn{4}{c;{4pt/4pt}}{\multirow{9}{*}{ }} &
\multicolumn{5}{c|}{\multirow{12}{*}{ }}
\\
& \multicolumn{8}{|c|}{\multirow{3}{*}{ }} &
\multicolumn{5}{c;{4pt/4pt}}{\multirow{8}{*}{ }} &
\multicolumn{4}{c;{4pt/4pt}}{\multirow{9}{*}{ }} &
\multicolumn{5}{c|}{\multirow{12}{*}{ }}
\\
& \multicolumn{8}{|c|}{\multirow{3}{*}{ }} &
\multicolumn{5}{c;{4pt/4pt}}{\multirow{8}{*}{ }} &
\multicolumn{4}{c;{4pt/4pt}}{\multirow{9}{*}{ }} &
\multicolumn{5}{c|}{\multirow{12}{*}{ }}
\\
\cdashline{2-9} \cline{10-11}
$i_{l-1}+s_4+1$ & \multicolumn{10}{|c;{1pt/1pt}}{\multirow{3}{*}{$P_{l-1}$ }} &
\multicolumn{3}{c;{4pt/4pt}}{\multirow{8}{*}{ }} &
\multicolumn{4}{c;{4pt/4pt}}{\multirow{9}{*}{ }} &
\multicolumn{5}{c|}{\multirow{12}{*}{ }}
\\
\cline{12-12}
$\cdots$  & \multicolumn{11}{|c;{1pt/1pt}}{\multirow{3}{*}{ }} &
\multicolumn{2}{c;{4pt/4pt}}{\multirow{8}{*}{ }} &
\multicolumn{4}{c;{4pt/4pt}}{\multirow{9}{*}{ }} &
\multicolumn{5}{c|}{\multirow{12}{*}{ }}
\\
\cline{13-14}
 & \multicolumn{13}{|c|}{\multirow{3}{*}{ }} &
\multicolumn{4}{c;{4pt/4pt}}{\multirow{9}{*}{ }} &
\multicolumn{5}{c|}{\multirow{12}{*}{ }}
\\
\cdashline{2-14} \cline{15-18}
$i_{l-1}+s_{l-2}+1$ & \multicolumn{17}{|c|}{\multirow{3}{*}{ }} &
\multicolumn{5}{c|}{\multirow{12}{*}{ }}
\\
& \multicolumn{17}{|c|}{\multirow{3}{*}{ }} &
\multicolumn{5}{c|}{\multirow{12}{*}{ }}
\\
$i_l$ & \multicolumn{17}{|c|}{\multirow{3}{*}{ }} &
\multicolumn{5}{c|}{\multirow{12}{*}{ }}
\\
\cline{2-23}
\end{tabular}
}
\end{center}

if $l \equiv 0$, or

\begin{center}
\resizebox{\textwidth}{20mm}{
\begin{tabular}{r cccc cccc ccccc cccc ccccc}
    & $i_l$ & & & &
    $i_l+s_2$  &  & & &
    $i_l+s_4$ &&  $\cdots$ &&&
    $i_l+s_{l-3}$ & &&&
    $i_l+s_{l-1}$ &&&& $i_{l+1}-1$ \\
    \cline{2-23}
$i_{l-1}+1$ & \multicolumn{4}{|c;{4pt/4pt}}{\multirow{3}{*}{   }} &  \multicolumn{4}{c;{4pt/4pt}}{\multirow{3}{*}{  }} &
\multicolumn{5}{c;{4pt/4pt}}{\multirow{8}{*}{  }} &
\multicolumn{4}{c;{4pt/4pt}}{\multirow{9}{*}{ $Q_l$   }} &
\multicolumn{5}{c|}{\multirow{12}{*}{  }}
\\
& \multicolumn{4}{|c;{4pt/4pt}}{\multirow{3}{*}{ }} &
\multicolumn{4}{c;{4pt/4pt}}{\multirow{3}{*}{  }} &
\multicolumn{5}{c;{4pt/4pt}}{\multirow{8}{*}{  }} &
\multicolumn{4}{c;{4pt/4pt}}{\multirow{9}{*}{  }} &
\multicolumn{5}{c|}{\multirow{12}{*}{  }}
\\
& \multicolumn{4}{|c;{4pt/4pt}}{\multirow{3}{*}{ }} &
\multicolumn{4}{c;{4pt/4pt}}{\multirow{3}{*}{  }} &
\multicolumn{5}{c;{4pt/4pt}}{\multirow{8}{*}{  }} &
\multicolumn{4}{c;{4pt/4pt}}{\multirow{9}{*}{  }} &
\multicolumn{5}{c|}{\multirow{12}{*}{  }}
\\
\cline{2-5}
$i_{l-1}+s_1+1$ & \multicolumn{4}{|c|}{\multirow{3}{*}{ }} &
\multicolumn{4}{c;{4pt/4pt}}{\multirow{3}{*}{ }} &
\multicolumn{5}{c;{4pt/4pt}}{\multirow{8}{*}{ }} &
\multicolumn{4}{c;{4pt/4pt}}{\multirow{9}{*}{ }} &
\multicolumn{5}{c|}{\multirow{12}{*}{ }}
\\
& \multicolumn{4}{|c|}{\multirow{3}{*}{ }} &
\multicolumn{4}{c;{4pt/4pt}}{\multirow{3}{*}{ }} &
\multicolumn{5}{c;{4pt/4pt}}{\multirow{8}{*}{ }} &
\multicolumn{4}{c;{4pt/4pt}}{\multirow{9}{*}{ }} &
\multicolumn{5}{c|}{\multirow{12}{*}{ }}
\\
& \multicolumn{4}{|c|}{\multirow{3}{*}{ }} &
\multicolumn{4}{c;{4pt/4pt}}{\multirow{3}{*}{ }} &
\multicolumn{5}{c;{4pt/4pt}}{\multirow{8}{*}{ }} &
\multicolumn{4}{c;{4pt/4pt}}{\multirow{9}{*}{ }} &
\multicolumn{5}{c|}{\multirow{12}{*}{ }}
\\
\cdashline{2-5} \cline{6-9}
$i_{l-1}+s_3+1$ & \multicolumn{8}{|c;{1pt/1pt}}{\multirow{3}{*}{$P_{l-1}$ }} &
\multicolumn{5}{c;{4pt/4pt}}{\multirow{8}{*}{ }} &
\multicolumn{4}{c;{4pt/4pt}}{\multirow{9}{*}{ }} &
\multicolumn{5}{c|}{\multirow{12}{*}{ }}
\\
\cline{10-11}
$\cdots$  & \multicolumn{10}{|c;{1pt/1pt}}{\multirow{3}{*}{ }} &
\multicolumn{3}{c;{4pt/4pt}}{\multirow{8}{*}{ }} &
\multicolumn{4}{c;{4pt/4pt}}{\multirow{9}{*}{ }} &
\multicolumn{5}{c|}{\multirow{12}{*}{ }}
\\
\cline{12-14}
 & \multicolumn{13}{|c|}{\multirow{3}{*}{ }} &
\multicolumn{4}{c;{4pt/4pt}}{\multirow{9}{*}{ }} &
\multicolumn{5}{c|}{\multirow{12}{*}{ }}
\\
\cdashline{2-14} \cline{15-18}
$i_{l-1}+s_{l-2}+1$ & \multicolumn{17}{|c|}{\multirow{3}{*}{ }} &
\multicolumn{5}{c|}{\multirow{12}{*}{ }}
\\
& \multicolumn{17}{|c|}{\multirow{3}{*}{ }} &
\multicolumn{5}{c|}{\multirow{12}{*}{ }}
\\
$i_l$ & \multicolumn{17}{|c|}{\multirow{3}{*}{ }} &
\multicolumn{5}{c|}{\multirow{12}{*}{ }}
\\
\cline{2-23}
\end{tabular}
}
\end{center}


if $l \not\equiv 0$. Thus it is noticed that
\begin{eqnarray}
|P_l| = |Q_l|  =
\begin{cases}
\sum\limits_{j =1}^{l/2} (s_{2j+1}-s_{2j-1})s_{2j} & \text{ if } l \equiv 0, \\
\sum\limits_{j=1}^{(l+1)/2} (s_{2j} - s_{2j-2})s_{2j-1} & \text{ if } l \not\equiv 0,
\end{cases} \nonumber
\end{eqnarray}
for $l \in [1,k-2]$.
Consequently,
\begin{eqnarray} \label{e_prod}
\det \mathscr G_{d,\lambda} = \pm (\prod_{l=2}^{k-1} \det M_l)^2 \det M_k
\end{eqnarray}

\setcounter{theorem}{3}
\subsection{Isomorphisms} \label{isomorphism}
Let $d =(i_1,i_2,\cdots,i_k) \in \Delta^o$, and $\lambda, \lambda' \in \operatorname{Hom}_{\mathbf k}(\mathfrak g_{d}(2), \mathbf k)$.
For $\Psi: \mathfrak g \rightarrow \mathfrak g$ an  isomorphism of Lie algebras,
if $\Psi(\mathfrak g_{d}(1))=  \mathfrak g_{d}(1)$, and
$$\lambda'([\Psi(v),\Psi(w)]) = \lambda(v,w),$$
for all $v,w \in \mathfrak g_d(1)$, then we call
$\Psi: (\mathfrak g, d, \lambda) \rightarrow (\mathfrak g, d, \lambda')$
an \emph{isomorphism of weighted Lie algebras}.

There is a similar definition in the integral case. Let $\lambda, \lambda' \in \operatorname{Hom}_{\mathbb Z}(\mathfrak g_{\mathbb Z,d}(2), \mathbb Z)$.
For $\Psi: \mathfrak g_0 \rightarrow \mathfrak g_0$ an  isomorphism of Lie algebras,
if $\Psi(\mathfrak g_{\mathbb Z,d}(1))=  \mathfrak g_{\mathbb Z,d}(1)$, and
$$\lambda'([\Psi(v),\Psi(w)]) = \lambda(v,w),$$
for all $v,w \in \mathfrak g_{\mathbb Z,d}(1)$, then we call
$\Psi: (\mathfrak g_0, d, \lambda) \rightarrow (\mathfrak g_0, d, \lambda')$
an \emph{isomorphism of weighted Lie algebras}.

For $\lambda, \lambda' \in \operatorname{Hom}_{\mathbf k}(\mathfrak g_{d}(2), \mathbf k)$ (resp. $\lambda, \lambda' \in \operatorname{Hom}_{\mathbb Z}(\mathfrak g_{\mathbb Z,d}(2), \mathbb Z)$), it is clear that
\begin{eqnarray} \label{e_iso_eq}
\det \mathscr G_{d,\lambda} = \pm \det \mathscr G_{d,\lambda'}
\end{eqnarray}
if there exists an isomorphism $\Psi: (\mathfrak g, d, \lambda) \rightarrow (\mathfrak g, d', \lambda')$ (resp. $\Psi: (\mathfrak g_0, d, \lambda) \rightarrow (\mathfrak g_0, d', \lambda')$) of weighted Lie algebras.

Let $d =(i_1,i_2,\cdots, i_k) \in \Delta^o$, where 
$k \geq 2$. Let $\lambda \in \operatorname{Hom}_{\mathbf k}(\mathfrak g_{d}(2),\mathbf k)$.
For 
$l \in [1,k-1]$ and $s,t \in [i_l,i_{l+1}-1]$, there are natural isomorphisms as follows. In the following we denote $e_{s,t}=e_{\varepsilon_s- \varepsilon_t}$ for convenience.
\begin{enumerate}
\item For $\gamma \in \mathbf k$, define
$$ \varrho_{s,t}^{\gamma} : (\mathfrak g, d, \lambda) \rightarrow (\mathfrak g, d, \lambda'),$$
    where
    \begin{eqnarray}
    \begin{array}{rcll}
    \varrho_{s,t}^{\gamma} (e_{a,s}) & = & e_{a, s} + \gamma e_{a, t} & \text{ for all } a \in [i_{l-1}+1, i_l], \\
    \varrho_{s,t}^{\gamma} (e_{t+1, b}) & = & e_{t+1, b} - \gamma e_{s+1, b} & \text{ for all } b \in [i_{l+1},i_{l+2}-1] \text{ if } l \leq k-2, \\
    & & & \hspace{-0.44cm}\text{ or for all } b \in [i_k, 2n-i_k-1] \text{ if } l = k-1,  \\
    \varrho_{s,t}^{\gamma} (e_{\alpha}) & = & e_{\alpha}  & 
    \text{ for other } \alpha \in \Phi_{d,1},
    \end{array} \nonumber
    \end{eqnarray}
    and
    \begin{eqnarray}
    \begin{array}{rcll}
    \lambda'(e_{a', s}) & = &  \lambda(e_{a', s}) - \gamma \lambda(e_{a', t})
    & \text{ for all } a' \in [i_{l-2}+1,i_{l-1}] \text{ if } l \geq 2,  \\
    \lambda'(e_{t+1, b'}) & = & \lambda(e_{t+1, b'}) + \gamma \lambda(e_{s+1, b'}) & \text{ for all } b' \in [i_{l+2},i_{l+3}-1] \text{ if } l \leq k-3, \\
    & & &\hspace{-0.44cm} \text{ or for all } b' \in [i_k, 2n-i_k-1] \text{ if } l=k-2, \\
    \lambda'(e_{\beta}) & = & \lambda(e_{\beta})& \text{ for other } \beta \in \Phi_{d,2}.
    \end{array} \nonumber
    \end{eqnarray}
    Denote this $\lambda'$ as $\varrho_{s,t}^{\gamma}(\lambda)$.
    When $\gamma =1$, write $\varrho_{s,t}^{\gamma}$ as $\varrho_{s,t}$.

\item Define
$$ \varsigma_{s,t}: (\mathfrak g, d, \lambda) \rightarrow (\mathfrak g, d, \lambda'),$$
where
    \begin{eqnarray}
    \begin{array}{rcll}
    \varsigma_{s,t}(e_{a, s}) & = & e_{a, t} & \text{ for all } a \in [i_{l-1}+1, i_l], \\
    \varsigma_{s,t}(e_{t+1, b}) & = &  e_{s+1, b} &  \text{ for all } b \in [i_{l+1},i_{l+2}-1] \text{ if } l \leq k-2, \\
    & & & \hspace{-0.44cm}  \text{ or for all } b \in [i_k , 2 n-i_k - 1] \text{ if } l = k-1, \\
    \varsigma_{s,t}(e_{\alpha}) & = & e_{\alpha}  & 
    \text{ for other } \alpha \in \Phi_{d,1},
    \end{array} \nonumber
    \end{eqnarray}
    and
    \begin{eqnarray}
    \begin{array}{rcll}
    \lambda'(e_{a', s}) & = & \lambda(e_{a', t}) & \text{ for all } a' \in [i_{l-2}+1,i_{l-1}] \text{ if } l \geq 2, \\
    \lambda'(e_{t+1, b'}) & = & \lambda(e_{s+1, b'}) & \text{ for all } b' \in [i_{l+2},i_{l+3}-1] \text{ if } l \leq k-3,  \\
    & & & \hspace{-0.44cm} \text{ or for all } b' \in  [i_k, 2n-i_k-1] \text{ if } l=k-2, \\
    \lambda'(e_{\beta}) & = & \lambda(e_{\beta}) & \text{ for other } \beta \in \Phi_{d,2}.
    \end{array} \nonumber
    \end{eqnarray}
    Denote this $\lambda'$ as $\varsigma_{s,t}(\lambda)$.
\end{enumerate}

Similarly,
\begin{eqnarray}
\varrho_{s,t}  & : & (\mathfrak g_0, d, \lambda) \rightarrow (\mathfrak g_0, d, \varrho_{s,t}(\lambda)), \nonumber \\
\varsigma_{s,t} &: & (\mathfrak g_0, d, \lambda) \rightarrow (\mathfrak g_0, d, \varsigma_{s,t}(\lambda)), \nonumber
\end{eqnarray}
are defined for $\lambda \in \operatorname{Hom}_{\mathbb Z}(\mathfrak g_{\mathbb Z,d}(2),\mathbb Z)$.

\begin{lemma} \label{l_nnnw}
Let $d =(i_1,i_2,\cdots,i_k) \in \Delta^o$, and $l \in [2,k-1]$. Let $t_1,t_2 \in [i_l,i_{l+1}-1]$, $t_1 \neq t_2$, and $s \in [i_{l-2}+1,i_{l-1}]$.
For $\lambda \in \operatorname{Hom}_{\mathbb Z}(\mathfrak g_{\mathbb Z,d}(2),\mathbb Z)$, if $\lambda(e_{s,t_1}) \neq 0$, then
we can find a sequence of maps $$\varpi_1, \varpi_2, \cdots, \varpi_u$$
with $\varpi_i \in \{ \varrho_{t_1,t_2}^{\pm 1},  \varsigma_{t_1,t_2} \}$ for $i \in [1,u]$, and $\varpi_u \cdots \varpi_2 \varpi_1(\lambda)(e_{s,t_1}) =0$.
\end{lemma}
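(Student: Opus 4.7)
The plan is to recognize the three allowed moves as elementary Euclidean-algorithm steps on a pair of integers, and then invoke the Euclidean algorithm directly. Set $a:=\lambda(e_{s,t_1})$ and $b:=\lambda(e_{s,t_2})$. Unwinding the definitions of $\varrho_{t_1,t_2}^{\gamma}$ and $\varsigma_{t_1,t_2}$ in Section \ref{isomorphism} and using that $s\in[i_{l-2}+1,i_{l-1}]$ (so $s$ lies in the index range $a'$ appearing in the formulas), one sees that the induced action on the pair $(a,b)$ is
\begin{align*}
\varrho_{t_1,t_2}^{\gamma}:(a,b)&\longmapsto(a-\gamma b,\,b),\\
\varsigma_{t_1,t_2}:(a,b)&\longmapsto(b,\,a).
\end{align*}
The remaining modifications these maps make to entries of the form $\lambda(e_{t_2+1,b'})$ are inert for the present argument, since they do not feed back into $\lambda(e_{s,t_1})$ or $\lambda(e_{s,t_2})$ at any later step; in particular $b$ remains untouched under $\varrho_{t_1,t_2}^{\pm1}$, because $e_{s,t_2}$ falls under the ``other $\beta$'' clause.

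With $\gamma$ restricted to $\pm1$, the three available moves are $(a,b)\mapsto(a\pm b,b)$ and $(a,b)\mapsto(b,a)$, which are precisely the column operations used in the Euclidean algorithm. The reduction is then immediate. If $b=0$, a single application of $\varsigma_{t_1,t_2}$ produces $(0,a)$ and we are done. Otherwise, apply $\varrho_{t_1,t_2}^{1}$ or $\varrho_{t_1,t_2}^{-1}$ repeatedly to replace $a$ by $a\bmod b$ (this is always possible with $\pm1$-coefficient moves because we are simply subtracting $b$ from $a$ a number of times with an appropriate sign), then apply $\varsigma_{t_1,t_2}$ and iterate. Since the new second coordinate after each swap is strictly smaller in absolute value than the previous one, the procedure terminates in finitely many steps at a pair whose first coordinate is zero, yielding the desired sequence $\varpi_1,\ldots,\varpi_u$.

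There is no genuine obstacle to this argument; it is a straightforward bookkeeping exercise once the correspondence between the three maps and Euclidean moves is pinned down. The one point worth stating carefully is precisely this correspondence, namely the verification that $\varrho_{t_1,t_2}^{\gamma}$ leaves $\lambda(e_{s,t_2})$ fixed so that $b$ can play the role of the Euclidean ``divisor'' that is preserved while $a$ is being reduced.
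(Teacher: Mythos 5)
Your proof is correct and follows essentially the same route as the paper's: both implement the Euclidean algorithm on the pair $\bigl(\lambda(e_{s,t_1}),\lambda(e_{s,t_2})\bigr)$ using $\varsigma_{t_1,t_2}$ as the swap and $\varrho_{t_1,t_2}^{\pm 1}$ as the subtraction step, with termination by descent on absolute values (the paper phrases this as an induction). Your explicit check that $\varrho_{t_1,t_2}^{\gamma}$ fixes $\lambda(e_{s,t_2})$ is a worthwhile point that the paper leaves implicit, but it is not a different argument.
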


\begin{proof}
If $\lambda(e_{s,t_2}) =0$, then $\varsigma_{t_1,t_2}(\lambda)(e_{s,t_1})=0$, and the lemma is proved.
Now suppose $\lambda(e_{s,t_2}) \neq 0$. We show the result using induction on $|\lambda(e_{s,t_2})|$.

If $|\lambda(e_{s,t_2})| \leq |\lambda(e_{s,t_1})|$, then take $\varpi_1 \in \{ \varrho_{t_1,t_2}^{\pm 1} \}$, such that $$|\varpi_1(\lambda)(e_{s,t_1})| =
|\lambda(e_{s,t_1})| - | \lambda(e_{s,t_2})| < |\lambda(e_{s,t_1})|.$$ So the lemma follows by induction.

If $|\lambda(e_{s,t_2})| > | \lambda(e_{s,t_1})|$, then take $m \in \mathbb Z_+$, satisfying
$$m|\lambda(e_{s,t_1})|\leq |\lambda(e_{s,t_2})| <(m+1)|\lambda(e_{s,t_1})|.$$
Then we can take $\varpi_1, \cdots, \varpi_m \in \{ \varrho_{t_1,t_2}^{\pm 1} \}$, such that
$$|\varpi_m \cdots \varpi_1 \varsigma_{t_1,t_2}(\lambda)(e_{s,t_1})| = |\lambda(e_{s,t_2})| - m|\lambda(e_{s,t_1})| < |\lambda(e_{s,t_1})|. $$
The lemma follows by induction.
\end{proof}

\begin{lemma} \label{l_iso_faith}
Let $d =(i_1,i_2,\cdots, i_k) \in \Delta^o$, where $k \geq 3$.
\begin{enumerate}
\item Let $\lambda \in \operatorname{Hom}_{\mathbf k}(\mathfrak g_{d}(2),\mathbf k)$. There exists a faithful map $\lambda' \in \operatorname{Hom}_{\mathbf k}(\mathfrak g_{d}(2),\mathbf k)$, and an isomorphism $\Psi: (\mathfrak g, d,\lambda) \rightarrow (\mathfrak g, d, \lambda')$ of weighted Lie algebras.
\item Let $\lambda \in \operatorname{Hom}_{\mathbb Z}(\mathfrak g_{\mathbb Z,d}(2),\mathbb Z)$. There exists a faithful map $\lambda' \in \operatorname{Hom}_{\mathbb Z}(\mathfrak g_{\mathbb Z,d}(2),\mathbb Z)$, and an isomorphism $\Psi: (\mathfrak g_0, d,\lambda) \rightarrow (\mathfrak g_0, d, \lambda')$ of weighted Lie algebras.
\end{enumerate}
\end{lemma}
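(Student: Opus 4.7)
The plan is to transform an arbitrary $\lambda$ into a faithful $\lambda'$ by repeatedly applying the elementary isomorphisms $\varrho_{s,t}^\gamma$ and $\varsigma_{s,t}$ of \S\ref{isomorphism}, with Lemma~\ref{l_nnnw} providing the single-entry elimination step. The two cases of the lemma are handled in parallel: in the field case (1), any $\gamma\in\mathbf k$ is allowed; in the integral case (2), the Euclidean-algorithm trick already used in the proof of Lemma~\ref{l_nnnw} keeps everything over $\mathbb Z$.

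The key structural observation is that, for any $l$, an operation $\varrho_{s',t'}^\gamma$ or $\varsigma_{s',t'}$ with $s',t'\in[i_l,i_{l+1}-1]$ affects $\lambda$ only on two blocks of $\Phi_{d,2}$: on the block at level $l-1$ it is a column operation combining the columns indexed by $s'$ and $t'$, and on the block at level $l+1$ it is a row operation; all other blocks are preserved. Thus column operations on the block $B_l$ of roots $\gamma$ with $[\gamma:\alpha_{i_l}]=[\gamma:\alpha_{i_{l+1}}]=1$ come from isomorphisms based at the index group $G_{l+1}:=[i_{l+1},i_{l+2}-1]$, with the sole side effect being a row operation on $B_{l+2}$. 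This is formally analogous to Gaussian elimination, except that the row and column passes of adjacent blocks are tied together.

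Using this, I would process the blocks $B_1, B_2, \ldots, B_{k-2}$ from left to right. Assuming inductively that $B_1,\ldots,B_{l-1}$ are already faithful, I would apply Lemma~\ref{l_nnnw} repeatedly (with $t_1,t_2\in G_{l+1}$ and $s$ ranging over the row indices of $B_l$) to zero out each entry $\lambda(e_\gamma)$ with $\gamma\in B_l\setminus\Omega_l$. The eliminations within $B_l$ should be ordered to respect the triangular shape of $\Omega_l$---for instance, column by column starting from the column that violates $\Omega_l$ most severely---so that each step strictly reduces a suitable complexity measure of bad entries and is never reversed later. By the structural observation, these operations leave $B_1,\ldots,B_{l-1}$ untouched, and the induced row operations fall on the still-unprocessed block $B_{l+2}$, which will be cleaned up in its turn.

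The main difficulty is verifying that the column operations furnished by $G_{l+1}$, possibly supplemented by row operations from $G_{l-1}$ when $l\geq 2$, really do suffice to clear every bad entry of $B_l$. This is a combinatorial claim about the interaction between the support of $\lambda|_{B_l}$ and the triangular region $\Omega_l\cap B_l$, whose exact shape depends on $s_l$ and $s_{l+2}$ and on boundary effects at $l=1$, $l=k-2$, and---in types $B$, $C$, $D$---on the long-root contributions at the last index group. I expect this verification to require a short case analysis guided by the parametrizations of $\Delta^o$ given in \S\ref{ssfai}, and to be the real content of the argument; once it is in place the rest of the proof is a straightforward induction.
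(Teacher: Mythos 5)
You have assembled the right toolkit --- the elementary isomorphisms $\varrho^{\gamma}_{s,t}$ and $\varsigma_{s,t}$, Lemma~\ref{l_nnnw} for the integral case, and the observation that an operation based at the index group $[i_{l+1},i_{l+2}-1]$ acts as a column operation on the block at level $l$ and a row operation on the block at level $l+2$ --- and this is indeed the machinery the paper uses. But the one step you defer (``the main difficulty is verifying that the column operations \dots really do suffice'' and that the eliminations are ``never reversed later'') is precisely the content of the lemma, and your proposal does not supply it. Worse, the ordering you tentatively suggest --- process each block column by column \emph{starting from the column that violates $\Omega_l$ most severely}, i.e.\ from the right --- does not work: to kill an entry $\lambda(e_{s,t})$ one must combine column $t$ with a pivot column $t'=s+i_{l+1}-(i_{l-1}+1)<t$, and this operation adds $\gamma\lambda(e_{s_1,t'})$ to every other entry $\lambda(e_{s_1,t})$ of column $t$; since $t'<t$, a right-to-left sweep uses pivot columns that have not yet been cleaned, so $\lambda(e_{s_1,t'})$ may be a nonzero bad entry and the step re-dirties positions you have already cleared.

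The paper resolves this with a single global induction on the minimal row index $s$ for which some $\varepsilon_s-\varepsilon_t$ lies in $\bigcup_l(\Gamma_l\setminus\Omega_l)$, rather than a block-by-block pass. For such a bad root it takes the pivot $t'=s+i_{l+1}-(i_{l-1}+1)$, which is exactly the boundary column of $\Omega_l$ in row $s$; for every $s_1<s$ the root $\varepsilon_{s_1}-\varepsilon_{t'}$ lies outside $\Omega_l$, so by minimality of $s$ one has $\lambda(e_{s_1,t'})=0$, and hence the column operation (a single $\varsigma_{t,t'}$ or $\varrho^{\gamma}_{t,t'}$ over $\mathbf k$, or the string of maps from Lemma~\ref{l_nnnw} over $\mathbb Z$) fixes all rows above $s$; the induced row operation on the level-$(l+2)$ block only touches rows indexed by $t+1>s$ and is likewise harmless. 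That two-line observation is what makes the induction terminate, and no case analysis on $s_l$, $s_{l+2}$, boundary levels, or the long roots of types $B$, $C$, $D$ is needed --- faithfulness only constrains the roots $\varepsilon_s-\varepsilon_t$, and all blocks $\Gamma_l$, $l\in[1,k-2]$, have the same combinatorial shape. So your proposal anticipates difficulties that do not arise while omitting the one argument that is actually required; to repair it, either switch to the paper's induction on the minimal bad row index, or, if you insist on a block-local sweep, process columns from the \emph{left} and rows from the top, and prove the preservation statement above.
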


\begin{proof}
For $\lambda \in \operatorname{Hom}_{\mathbf k}(\mathfrak g_{d}(2),\mathbf k)$, or $\lambda \in \operatorname{Hom}_{\mathbb Z}(\mathfrak g_{\mathbb Z,d}(2),\mathbb Z)$,
put
$$\Gamma_l = \{ \gamma \in \Phi_{d,2} \mid [\gamma : \alpha_{i_l}]= [\gamma: \alpha_{i_{l+1}}]=1, \lambda(e_{\gamma}) \neq 0 \}$$
for $l \in [1,k-2]$, then
take $s$ minimal with $$\varepsilon_s -\varepsilon_t \in \mathop{\bigcup}\limits_{l=1}^{k-2}\left(\Gamma_l \backslash \Omega_l\right)$$
for some $t$. The result will be proved by induction on $s$.

Take $l \in [1,k-2]$ with $\varepsilon_s -\varepsilon_t  \in \Gamma_l \backslash \Omega_l$.
Let $t' = s+ i_{l+1} - (i_{l-1}+1)$. Then $t' <t$ as $\varepsilon_s -\varepsilon_t \not\in \Omega_l$.
\begin{enumerate}
\item Let $\lambda \in \operatorname{Hom}_{\mathbf k}(\mathfrak g_{d}(2),\mathbf k)$. If $\lambda(e_{st'}) =0$, then $\varsigma_{t,t'}(\lambda)(e_{st}) =0$, and $\varsigma_{t,t'}(\lambda)(e_{s_1t_1}) = \lambda(e_{s_1t_1})$ for any $s_1 \in [i_{l-1}+1,i_l]$ and $t_1 \in [i_{l+1},i_{l+2}-1]$ with $s_1 <s$.

If $\lambda(e_{st'}) \neq 0$, then $\varrho_{t,t'}^{\gamma}(\lambda)(e_{st}) =0$, where $\gamma=\lambda(e_{st})\lambda(e_{st'})^{-1}$. Moreover,
$\varrho_{t,t'}^{\gamma}(\lambda)(e_{s_1,t_1}) =  \lambda (e_{s_1,t_1})$, for any $s_1 \in [i_{l-1}+1,i_l]$ and $t_1 \in [i_{l+1},i_{l+2}-1]$ with $s_1 <s$.

In each case, the result will be proved through induction with $\lambda$ replaced by $\varsigma_{t,t'}(\lambda)$ or $\varrho_{t,t'}^{\gamma}(\lambda)$.
\item Let $\lambda \in \operatorname{Hom}_{\mathbb Z}(\mathfrak g_{\mathbb Z,d}(2),\mathbb Z)$. By Lemma \ref{l_nnnw}, there exist $\varpi_1, \cdots, \varpi_u \in \{ \varrho_{t,t'}^{\pm 1},  \varsigma_{t,t'} \}$, satisfying $\varpi_u \cdots \varpi_1(\lambda)(e_{s,t}) =0$. Moreover,
for any $s_1 \in [i_{l-1}+1,i_l]$ and $t_1 \in [i_{l+1},i_{l+2}-1]$ with $s_1 <s$,
$\varpi_u \cdots \varpi_1(\lambda)(e_{s_1,t_1}) =  \lambda (e_{s_1,t_1})$.

Replace $\lambda$ by $\varpi_u \cdots \varpi_1(\lambda)$, then the induction hypothesis is applied and the result is true.
\end{enumerate}

\end{proof}

In Theorem \ref{mainThm2}, for a given  map $\lambda:\mathfrak g_d(2)\rightarrow \mathbf k$ (resp. $\lambda: \mathfrak g_{\mathbb Z,d}(2) \rightarrow \mathbb Z$),
we try to figure out the properties of $\det \mathscr G_{d,\lambda}$.
The above arguments from subsection \ref{ssfai} to Lemma \ref{l_iso_faith} tell us that we 
can focus our attention to faithful $\lambda$.
According to subsection \ref{ssfai}, when $\lambda$ is faithful,
we can construct the set $P_l, Q_l$ and thus 
$$\det \mathscr G_{d,\lambda} = \pm (\prod_{l=2}^{k-1} \det M_l)^2 \det M_k.$$
The determinant $ \det M_l$, for $l\in [2, k-1]$, is easy to deal with. So we will pay more attention to $\det M_k$ in the following proof of our main theorem.

\section{Type $C$}

\subsection{Special odd weights of type $C$}
\setcounter{theorem}{1}
Let  $\mathfrak g$ be the simple Lie algebra of type $C_n$ over $\mathbf k$. As mentioned before,
we can identify $\Delta^o$ with the set of sequences $(i_1, i_2,\cdots, i_k)$ of integers, satisfying the following conditions (1)-(4):
\begin{enumerate}
\item $1 \leq i_1<i_2< \cdots <i_k \leq n-1$,
\item $s_l \leq s_{l+2}$ for $l \in [1,k-2]$,
\item $s_l \equiv 0$ for $l \in [1,k]$ with $l \equiv k-1$,
\item $s_{k-1} \leq 2(n-i_k)$,
\end{enumerate}
where $s_1 =i_1$ and $s_l = i_l-i_{l-1}$ for $l \in [2,k]$,
through identifying $d \in \Delta^o$ with $(i_1,i_2,\cdots,i_k)$ where $\{i_l | l \in [1,k]\} = \{ i \in [1,n] | d(\alpha_i) =1\}$.

\medskip

Let $d = (i_1,i_2,\cdots,i_k) \in \Delta^o$. As before, we set $s_1 =i_1$ and $s_l = i_l-i_{l-1}$ for $l \in [2,k]$. By the condition $(\spadesuit_C)$ in Section \ref{WDD}, it is not difficult to see that  $d$ is special if it satisfies the following conditions.

\noindent (a) $s_1$ is even;

\noindent (b) For any two integers $u\leq v$ in $[1,k]$, if $u\equiv v \equiv k-1$ and $$s_{u-2}<s_u=s_{u+2}=\dots=s_{v-2}=s_v<s_{v+2},$$ then $s_{v+1}-s_{u-1}$ is even.

\begin{lemma}
Let $d = (r) \in \Delta^o$.
\begin{enumerate}
\item If $d \in \Delta_{\text{spec}}$, then there exists a non-degenerate $\lambda \in \operatorname{Hom}_{\mathbb Z}( \mathfrak g_{\mathbb Z,d}(2),\mathbb Z)$.
\item If $d \not\in \Delta_{\text{spec}}$ and $\operatorname{char} \mathbf k =2$, then $\det \mathscr G_{d,\lambda} =0$ for any $\lambda \in  \operatorname{Hom}_{\mathbf k}(\mathfrak g_{d}(2), \mathbf k)$.
\end{enumerate}
\end{lemma}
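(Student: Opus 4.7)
My plan is to exploit the simplicity of the weighted structure when $d = (r)$ is a single-entry sequence. First I would identify the relevant root sets: $\Phi_{d,1}^+ = \{\varepsilon_a \pm \varepsilon_b \mid 1 \leq a \leq r < b \leq n\}$ and $\Phi_{d,2}^+ = \{\varepsilon_a + \varepsilon_c \mid 1 \leq a < c \leq r\} \cup \{2\varepsilon_a \mid 1 \leq a \leq r\}$. The associated sequence of elementary divisors is $\mathfrak d = (1^{2n-2r}, 2^r)$, and applying the specialness criterion $(\spadesuit_C)$ shows that $d \in \Delta_{\text{spec}}$ if and only if $r$ is even.

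Next, a short case analysis of root sums in type $C_n$ reveals that for $\alpha, \beta \in \Phi_{d,1}^+$, the bracket $[e_\alpha, e_\beta]$ vanishes unless $\{\alpha, \beta\} = \{\varepsilon_a - \varepsilon_b, \varepsilon_c + \varepsilon_b\}$ for some common index $b > r$. Partitioning
$$\Phi_{d,1}^+ = \bigsqcup_{b=r+1}^n (M_b \sqcup P_b), \qquad M_b = \{\varepsilon_a - \varepsilon_b \mid a \leq r\}, \quad P_b = \{\varepsilon_a + \varepsilon_b \mid a \leq r\},$$
the Gram matrix $\mathscr G_{d,\lambda}$ becomes block diagonal in $b$. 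Each $b$-block takes the form $\begin{pmatrix} 0 & B_b \\ -B_b^T & 0 \end{pmatrix}$, where $B_b$ is the $r \times r$ matrix with $B_b[a,c] = \pm\lambda(e_{\varepsilon_a + \varepsilon_c})$ for $a \neq c$ and $B_b[a,a] = \pm 2\lambda(e_{2\varepsilon_a})$ (the factor $\pm 2$ coming from the Chevalley relation $[e_{\varepsilon_a - \varepsilon_b}, e_{\varepsilon_a + \varepsilon_b}] = \pm 2\, e_{2\varepsilon_a}$). Consequently $\det \mathscr G_{d,\lambda} = \pm \prod_{b=r+1}^n (\det B_b)^2$.

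For part (1), with $r$ even, I would define $\lambda \in \operatorname{Hom}_{\mathbb Z}(\mathfrak g_{\mathbb Z,d}(2), \mathbb Z)$ by $\lambda(e_{\varepsilon_{2i-1} + \varepsilon_{2i}}) = 1$ for $i \in [1, r/2]$ and zero on all remaining basis vectors of $\mathfrak g_{\mathbb Z,d}(2)$. Each $B_b$ then decomposes as a block-diagonal matrix with $r/2$ copies of $\begin{pmatrix} 0 & \pm 1 \\ \pm 1 & 0 \end{pmatrix}$, giving $\det B_b = \pm 1$ and hence $\det \mathscr G_{d,\lambda} = \pm 1$, so $\lambda$ is non-degenerate over $\mathbb Z$. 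For part (2), with $r$ odd and $\operatorname{char} \mathbf k = 2$, the factor of $2$ kills every diagonal entry of $B_b$, and the sign ambiguity in the off-diagonal entries disappears in characteristic $2$, so $B_b$ is symmetric with zero diagonal, hence represents an alternating bilinear form on an $r$-dimensional space. Since every alternating form has even rank, $B_b$ is degenerate whenever $r$ is odd, yielding $\det B_b = 0$ for every $\lambda$ and therefore $\det \mathscr G_{d,\lambda} = 0$.

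The only slightly delicate step is the root-sum case analysis that rules out bracket contributions between different values of $b$; once that is verified, both parts reduce to elementary matrix computations.
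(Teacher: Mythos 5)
Your proposal is correct and follows essentially the same route as the paper: both reduce $\mathscr G_{d,\lambda}$ to a block structure indexed by $b\in[r+1,n]$ with $\det\mathscr G_{d,\lambda}=\pm\prod_b(\det B_b)^2$, use the same $\lambda$ (supported on $e_{\varepsilon_{2i-1}+\varepsilon_{2i}}$) for part (1), and for part (2) observe that in characteristic $2$ each $B_b$ is symmetric of odd size with zero diagonal (the diagonal being killed by the factor $2$ in $[e_{\varepsilon_a-\varepsilon_b},e_{\varepsilon_a+\varepsilon_b}]=\pm2e_{2\varepsilon_a}$), hence singular. The only cosmetic difference is that you group the rows by the index $b$ while the paper groups them by the sign of $\pm\varepsilon_b$ first; the computation is identical.
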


\begin{proof} When $d = (r) \in \Delta^o$,
$$\Phi_{d,1}=\{\varepsilon_i-\varepsilon_j, \varepsilon_i+\varepsilon_j\mid 1\leq i \leq r, r< j\leq n\}.$$
Now
we put
$$A_{\lambda,k}=(\lambda([e_{\varepsilon_i-\varepsilon_k},e_{\varepsilon_j+\varepsilon_k}])_{(i,j)\in [1,r]\times[1,r]}$$
for $\lambda \in \operatorname{Hom}_{\mathbb Z}( \mathfrak g_{\mathbb Z,d}(2),\mathbb Z)$, or $\lambda \in \operatorname{Hom}_{\mathbf k}( \mathfrak g_{ d}(2),\mathbf k)$, and ${k\in [r+1,n]}$.
Then the Gram matrix is
\begin{eqnarray}
\mathscr G_{d,\lambda} =
\begin{pmatrix}
& A \\
-{^tA} &
\end{pmatrix},
\end{eqnarray}
where $A = \operatorname{Diag}(A_{\lambda,r+1}, \cdots, A_{\lambda,n})$.

(1) If $d$ is special, then $r$ is even. In this case, we take
$\lambda \in \operatorname{Hom}_{\mathbb Z}( \mathfrak g_{\mathbb Z,d}(2),\mathbb Z)$ satisfying $\lambda(e_{\varepsilon_i+\varepsilon_{i+1}})=1$ for $i\in [1,r]$, $i\equiv 1$, and $\lambda(e_\alpha)=0$ for other roots.
Then it is easy to check that $\det \mathscr G_{d,\lambda} \in \{ \pm 1\}$ for this $\lambda$.

(2) If $d$ is not special, then $r$ is odd. Assume that $\text{char}~\mathbf k =2 $, therefore for each $k\in [r+1,n]$, $A_{\lambda, k}$ is an symmetric matrix of odd rank whose diagonal is zero.
 Consequently, $\det A_{\lambda, k} =0$ for any $\lambda \in  \operatorname{Hom}_{\mathbf k}(\mathfrak g_{d}(2), \mathbf k)$. So $\det \mathscr G_{d,\lambda} =0$ for any $\lambda \in  \operatorname{Hom}_{\mathbf k}(\mathfrak g_{d}(2), \mathbf k)$.

\end{proof}

\begin{lemma}\label{C(1)}
Let $d = (i_1,i_2, \cdots, i_k) \in \Delta^o \cap \Delta_{\text{spec}}$. Then there exists a non-degenerate $\lambda \in \operatorname{Hom}_{\mathbb Z}( \mathfrak g_{\mathbb Z,d}(2),\mathbb Z)$.
\end{lemma}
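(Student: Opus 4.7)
The plan is to exhibit an explicit non-degenerate $\lambda \in \operatorname{Hom}_{\mathbb Z}(\mathfrak g_{\mathbb Z,d}(2),\mathbb Z)$ by assigning $\pm 1$ to $\lambda$ on a carefully chosen subset of $\Phi_{d,2}$ and $0$ elsewhere; then, thanks to the factorisation
\begin{equation*}
\det \mathscr G_{d,\lambda} \;=\; \pm \Big(\prod_{l=2}^{k-1} \det M_l\Big)^{\!2}\, \det M_k
\end{equation*}
from (\ref{e_prod}), non-degeneracy reduces to showing $\det M_l = \pm 1$ for each $l \in [2,k]$.

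First, for the interior blocks $M_2, \ldots, M_{k-1}$ I would define $\lambda$ on the long differences $\gamma = \varepsilon_a - \varepsilon_b \in \Phi_{d,2}$ arising from a canonical ``horizontal-shift'' bijection between $P_l$ and $Q_l$, as suggested by the diagrams of \S\ref{ssfai}: namely, each $\alpha \in Q_l$ is paired with the unique $\beta \in P_l$ for which $\alpha + \beta \in \Omega_l$ and the index offsets read off consecutive $s_j$'s. The resulting $\lambda$ is automatically faithful in the sense of \S\ref{ssfai}, and each $M_l$ for $l < k$ becomes a signed permutation matrix, so $\det M_l = \pm 1$ for free.

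The main obstacle is the terminal block $M_k$, which encodes the interaction of $Q_{k-1}$ and $Y_{d,k}$ through the type-$C$-specific roots $\varepsilon_a + \varepsilon_b$ and $2\varepsilon_a$. Here the two parts of the specialness criterion $(\spadesuit_C)$ enter in essential but different ways. Condition (a), that $s_1$ is even, allows the initial $s_1$-wide band of $Y_{d,k}$ to admit a perfect self-pairing by $\varepsilon_a + \varepsilon_{a'}$ with $a,a' \in [1,i_1]$, producing a skew block of determinant $\pm 1$. Condition (b) then handles each plateau $s_u = s_{u+2} = \cdots = s_v$ with $u \equiv v \equiv k-1$: the parity of $s_{v+1} - s_{u-1}$ is exactly what is required to pair off the residual roots among themselves using $\varepsilon_a + \varepsilon_b$ (and $2\varepsilon_a$ at the very end, where the inequality $s_{k-1} \leq 2(n-i_k)$ guarantees enough short roots to absorb the leftover). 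I would put $\lambda = \pm 1$ on exactly these pairing sums and $0$ on the remaining elements of $\Phi_{d,2}$, and then check that $M_k$ is block-anti-diagonal with unimodular blocks over $\mathbb Z$.

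Combining the two computations via (\ref{e_prod}) gives $\det \mathscr G_{d,\lambda} = \pm 1$, so $\lambda$ is non-degenerate. The whole subtlety is confined to the plateau-by-plateau bookkeeping in the construction on $M_k$; I expect to carry it out by induction on the number of distinct plateau-values, with condition (b) ensuring at each step that the local parity comes out right. Without $(\spadesuit_C)$ some plateau would force an odd-sized skew block with zero diagonal which, as Proposition \ref{p_det_D} already indicates in characteristic $2$, obstructs unimodularity — this is exactly consistent with the complementary non-special case treated in Theorem \ref{mainThm2}(2).
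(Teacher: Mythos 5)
Your overall strategy coincides with the paper's: take $\lambda$ with value $1$ on an explicit set of roots realizing a pairing and $0$ elsewhere, invoke the factorization (\ref{e_prod}), observe that the interior blocks $M_2,\dots,M_{k-1}$ become signed permutation matrices, and reduce everything to $\det M_k=\pm1$. The interior part of your sketch is correct and is exactly what the sets $X_1,\dots,X_{k-2}$ in the paper's proof accomplish. The problem is the terminal block, which is where the entire content of the lemma sits: your sketch both defers its treatment and misdescribes the one mechanism it does specify. The roots $\varepsilon_a+\varepsilon_{a'}$ with $a,a'\in[1,i_1]$ are not in $\Phi_{d,2}$ once $k\geq 2$: for $d=(i_1,\dots,i_k)\in\Delta^o$ of type $C_n$ one has $\xi_a=k$ for all $a\leq i_1$, so $d(\varepsilon_a+\varepsilon_{a'})=2k\neq 2$. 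Hence $\lambda$ cannot be nonzero there; moreover such roots would pair elements of $Y_{d,1}$, whereas $M_k$ is indexed by $Q_{k-1}\cup Y_{d,k}$, whose roots involve indices in $[i_{k-2}+1,i_{k-1}]$ and $[i_{k-1}+1,n]$. The roots that actually do the work are $\varepsilon_j+\varepsilon_{j+1}$ for $j\in[i_{k-1}+1,i_k-1]$ with $j\equiv i_{k-1}+1$, together with the roots $\varepsilon_s+(-1)^{s-i_{k-2}}\varepsilon_{i_k+\lfloor (s-i_{k-2}+1)/2\rfloor}$ for $s\in[i_{k-2}+1,i_{k-1}]$ (the latter being available precisely because $s_{k-1}\leq 2(n-i_k)$, as you guessed).

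The ``plateau-by-plateau bookkeeping'' you postpone is precisely the proof: one must verify that with this support the index set $Q_{k-1}\cup Y_{d,k}$ splits into four-element subsets on each of which the Gram matrix is a $4\times4$ skew block of determinant $\pm1$. Specialness enters at exactly one point, namely in showing that the intervals $\Theta_j=[i_{k-1}+s_{u-1},\,i_k-1]$ attached to the rows of $Q_{k-1}$ have even cardinality, which is where condition (b) ($s_{v+1}-s_{u-1}$ even on each plateau) is used so that no odd leftover occurs; when $s_k$ is odd an additional block type involving $\varepsilon_{i_k}\pm\varepsilon_{i_k+\lfloor(s-i_{k-2}+1)/2\rfloor}$ is needed. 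None of this is produced by an induction on the number of plateau values, and your appeal to condition (a) plays no role for $k\geq 2$ (for $k$ even it is even automatic from the constraint $s_l\equiv 0$ for $l\equiv k-1$ defining $\Delta^o$). So the proposal is the right plan, but the essential step --- the explicit pairing on $Q_{k-1}\cup Y_{d,k}$ and the parity verification that makes it a perfect matching --- is missing, and the concrete mechanism offered in its place does not exist in $\Phi_{d,2}$.
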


\begin{proof}
We look for a faithful map $\lambda \in \operatorname{Hom}_{\mathbb Z}( \mathfrak g_{\mathbb Z,d}(2),\mathbb Z)$ such that  $\lambda$ is non-degenerate. Without loss of generality, we can assume that $k$ is even. The discussion for $k$ odd is similar.

For $l\in [1,k-2]$, we put
$$X_{l}=\{\varepsilon_s-\varepsilon_t\in \Phi_{d,2}\mid s\leq i_{l}<i_{l+1}<t~\text{and}~ t-s=i_{l+1}-i_{l-1}\}.$$
We also let
\begin{eqnarray}
X_{k-1} & = & \{\varepsilon_j+\varepsilon_{j+1}\mid j\in [i_{k-1}+1,i_{k}-1]~\text{and}~ j\equiv i_{k-1}+1\}, \nonumber \\
X_k & = & \{ \varepsilon_s + (-1)^{s-i_{k-2}} \varepsilon_{i_k+\lfloor\frac{s-i_{k-2}+1}{2}\rfloor}\mid s\in [i_{k-2}+1,i_{k-1}]\}. \nonumber
\end{eqnarray}
Denote $\mathbb{X} = \cup_{l = 1}^k X_{l} \subseteq \Phi_{d,2}$.

Let $\lambda: \mathfrak g_{\mathbb Z,d}(2) \rightarrow \mathbb Z$ be the homomorphism such that
$$\lambda(e_{\alpha}) =
\begin{cases}
1 & \text{ if } \alpha \in \mathbb{X}, \\
0 & \text{ if } \alpha \in \Phi_{d,2} \backslash\mathbb{X}.
\end{cases}$$
Then $\lambda$ is faithful.
Recall the definitions of $M_{l+1}$ for $l \in [1,k-1]$ in Section \ref{ssfai}.
It is easy to check that $\det M_{l+1}\in \{\pm 1\}$ for $l\in [1,k-2]$.
In the following we show that $\det M_k \in \{ \pm 1\}$ which implies that $\det \mathscr G_{d,\lambda} \in \{ \pm 1\}$, by (\ref{e_prod}).

To compute $ \det M_k$, we need to consider the set $Q_{k-1}$, as defined in Section \ref{ssfai}, which has the following form.
\begin{center}
\resizebox{\textwidth}{20mm}{ 
\begin{tabular}{r cccc cccc ccccc cccc ccccc}
    & $i_{k-1}$ & & & &
    $i_{k-1}+s_2$  &  & & &
    $i_{k-1}+s_4$ &&  $\cdots$ &&&
    $i_{k-1}+s_{k-4}$ & &&&
    $i_{k-1}+s_{k-2}$ &&&& $i_{k}-1$ \\
    \cline{2-23}
$i_{k-2}+1$ & \multicolumn{4}{|c;{4pt/4pt}}{\multirow{3}{*}{   }} &  \multicolumn{4}{c;{4pt/4pt}}{\multirow{3}{*}{  }} &
\multicolumn{5}{c;{4pt/4pt}}{\multirow{8}{*}{  }} &
\multicolumn{4}{c;{4pt/4pt}}{\multirow{9}{*}{ $Q_{k-1}$   }} &
\multicolumn{5}{c|}{\multirow{12}{*}{  }}
\\
& \multicolumn{4}{|c;{4pt/4pt}}{\multirow{3}{*}{ }} &
\multicolumn{4}{c;{4pt/4pt}}{\multirow{3}{*}{  }} &
\multicolumn{5}{c;{4pt/4pt}}{\multirow{8}{*}{  }} &
\multicolumn{4}{c;{4pt/4pt}}{\multirow{9}{*}{  }} &
\multicolumn{5}{c|}{\multirow{12}{*}{  }}
\\
& \multicolumn{4}{|c;{4pt/4pt}}{\multirow{3}{*}{ }} &
\multicolumn{4}{c;{4pt/4pt}}{\multirow{3}{*}{  }} &
\multicolumn{5}{c;{4pt/4pt}}{\multirow{8}{*}{  }} &
\multicolumn{4}{c;{4pt/4pt}}{\multirow{9}{*}{  }} &
\multicolumn{5}{c|}{\multirow{12}{*}{  }}
\\

\cline{2-5}
$i_{k-2}+s_{1}+1$ & \multicolumn{4}{|c|}{\multirow{3}{*}{ }} &
\multicolumn{4}{c;{4pt/4pt}}{\multirow{3}{*}{ }} &
\multicolumn{5}{c;{4pt/4pt}}{\multirow{8}{*}{ }} &
\multicolumn{4}{c;{4pt/4pt}}{\multirow{9}{*}{ }} &
\multicolumn{5}{c|}{\multirow{12}{*}{ }}
\\
& \multicolumn{4}{|c|}{\multirow{3}{*}{ }} &
\multicolumn{4}{c;{4pt/4pt}}{\multirow{3}{*}{ }} &
\multicolumn{5}{c;{4pt/4pt}}{\multirow{8}{*}{ }} &
\multicolumn{4}{c;{4pt/4pt}}{\multirow{9}{*}{ }} &
\multicolumn{5}{c|}{\multirow{12}{*}{ }}
\\
& \multicolumn{4}{|c|}{\multirow{3}{*}{ }} &
\multicolumn{4}{c;{4pt/4pt}}{\multirow{3}{*}{ }} &
\multicolumn{5}{c;{4pt/4pt}}{\multirow{8}{*}{ }} &
\multicolumn{4}{c;{4pt/4pt}}{\multirow{9}{*}{ }} &
\multicolumn{5}{c|}{\multirow{12}{*}{ }}
\\

\cdashline{2-5} \cline{6-9}
$i_{k-2}+s_3+1$ & \multicolumn{8}{|c;{1pt/1pt}}{\multirow{3}{*}{$P_{k-2}$ }} &
\multicolumn{5}{c;{4pt/4pt}}{\multirow{8}{*}{ }} &
\multicolumn{4}{c;{4pt/4pt}}{\multirow{9}{*}{ }} &
\multicolumn{5}{c|}{\multirow{12}{*}{ }}
\\
\cline{10-11}
$\cdots$  & \multicolumn{10}{|c;{1pt/1pt}}{\multirow{3}{*}{ }} &
\multicolumn{3}{c;{4pt/4pt}}{\multirow{8}{*}{ }} &
\multicolumn{4}{c;{4pt/4pt}}{\multirow{9}{*}{ }} &
\multicolumn{5}{c|}{\multirow{12}{*}{ }}
\\
\cline{12-14}
 & \multicolumn{13}{|c|}{\multirow{3}{*}{ }} &
\multicolumn{4}{c;{4pt/4pt}}{\multirow{9}{*}{ }} &
\multicolumn{5}{c|}{\multirow{12}{*}{ }}
\\
\cdashline{2-14} \cline{15-18}
$i_{k-2}+s_{k-3}+1$ & \multicolumn{17}{|c|}{\multirow{3}{*}{ }} &
\multicolumn{5}{c|}{\multirow{12}{*}{ }}
\\
& \multicolumn{17}{|c|}{\multirow{3}{*}{ }} &
\multicolumn{5}{c|}{\multirow{12}{*}{ }}
\\
$i_{k-1}$ & \multicolumn{17}{|c|}{\multirow{3}{*}{ }} &
\multicolumn{5}{c|}{\multirow{12}{*}{ }}
\\
\cline{2-23}
\end{tabular}
} 
\end{center}
For each $j\in [i_{k-2}+1,i_{k-1}]$, there exit two odd integers $u \leq v$ in $[1,k-1]$ satisfying $$s_{u-2}<s_u=s_{u+2}=\dots=s_{v-2}=s_v<s_{v+2},$$ and $j\in [i_{k-2}+s_{u-2}+1, i_{k-2}+s_v]$.
Set $$\Theta_j=[i_{k-1}+s_{u-1}, i_{k}-1].$$
For such $u,v$, $s_{v+1}-s_{u-1}$ is even, since $d\in \Delta^o$ is special and $k$ is even by the assumption.
It is easy to see that $\Theta_j=\Theta_{j+1}$ when $j\equiv i_{k-2}+1$.

We denote $\lambda(\alpha)=\lambda(e_\alpha)$ for simplicity, 
for $e_\alpha \in {\mathfrak g}_{\mathbb{Z},d}(2)$. Let
$$x_j=\lambda(\varepsilon_j+\varepsilon_{j+1})~\text{and}~y_s= \lambda(\varepsilon_s+(-1)^{s-i_{k-2}}\varepsilon_{i_k+\lfloor\frac{s-i_{k-2}+1}{2}\rfloor})$$
for $j\in [i_{k-1}+1,i_{k}-1]$, $j\equiv i_{k-1}+1$ and $s\in [i_{k-2}+1,i_{k-1}]$.

As in (\ref{e_MK}),
$$ M_k = (\lambda([e_{\alpha},e_{\beta}]))_{\alpha, \beta \in Q_{k-1} \cup Y_{d,k}}.$$
So we divide $Q_{k-1} \cup Y_{d,k}$ into subsets and consider the submatrices of $M_k$ corresponding to these subsets.

\medskip

\noindent (a) Suppose that $s_{k}$ is even. Then $|\Theta_s|$ is even for each $s\in [i_{k-2}+1,i_{k-1}]$.
We can divide $Q_{k-1} \cup Y_{d,k}$ into the following subsets:

\hspace{0.1cm} (a1) $\{\varepsilon_s-\varepsilon_j,\  \varepsilon_j+(-1)^{s-i_{k-2}}\varepsilon_{i_k+\lfloor\frac{s-i_{k-2}+1}{2}\rfloor},\  \varepsilon_{j+1}-(-1)^{s-i_{k-2}}\varepsilon_{i_k+\lfloor\frac{s-i_{k-2}+1}{2}\rfloor},\ \varepsilon_{s+1}-\varepsilon_{j+1} \}$, for $j,j+1 \in \Theta_s $;

\hspace{0.1cm} (a2) $\{\varepsilon_s-\varepsilon_{j+1}, \ \varepsilon_{j+1}+(-1)^{s-i_{k-2}}\varepsilon_{i_k+\lfloor\frac{s-i_{k-2}+1}{2}\rfloor}, \ \varepsilon_{j}-(-1)^{s-i_{k-2}}\varepsilon_{i_k+\lfloor\frac{s-i_{k-2}+1}{2}\rfloor},\ \varepsilon_{s+1}-\varepsilon_{j}\}$
for $j,j+1 \in \Theta_s $;

\hspace{0.1cm} (a3) the remaining roots can be partitioned into subsets of the form
$\{\varepsilon_j-\varepsilon_l,\ \varepsilon_{j+1}+\varepsilon_l,\  \varepsilon_j+\varepsilon_l, \ \varepsilon_{j+1}-\varepsilon_l \}.$

Let $X$ be a subset of $Q_{k-1} \cup Y_{d,k}$ of type (a1), (a2) or (a3). Then
$$(\lambda([e_{\alpha},e_{\beta}]))_{\alpha, \beta \in X} =
\begin{cases}
\begin{pmatrix}
&0   & y_s   & 0  & 0  \\
&-y_s  & 0  & x_j  & 0   \\
&0   & -x_j & 0   & y_{s+1}     \\
&0   & 0    & -y_{s+1}  & 0
\end{pmatrix} & \text{ if } X \text{ is of type (a1) or (a2)}, \\
\begin{pmatrix}
&0   &y_j  &0   &0 \\
&-y_j  &0  &0   &0 \\
&0   &0   &0   &y_j \\
 &0   &0 &-y_j  &0
\end{pmatrix} & \text{ if } X \text{ is of type (a3)}.
\end{cases}$$

\medskip

\noindent (b) Suppose $s_{k}$ is odd. Then $s_{k-1}=2(n-i_k)$ and
$|\Theta_s|$ is  odd for each $s\in [i_{k-2}+1,i_{k-1}]$.
Besides the subsets given as before, there is still one more type of roots, namely,
$$\{\varepsilon_s-\varepsilon_{i_k},\ \varepsilon_{i_k}+(-1)^{s-i_{k-2}}\varepsilon_{i_k+\lfloor\frac{s-i_{k-2}+1}{2}\rfloor},\  \varepsilon_{s+1}-\varepsilon_{i_k},\ \varepsilon_{i_k}-(-1)^{s-i_{k-2}}\varepsilon_{i_k+\lfloor\frac{s-i_{k-2}+1}{2}\rfloor} \}$$
for $s\in [i_{k-2}+1,i_{k-1}]$ and $s\equiv i_{k-2}+1$.

It is easy to see that $M_k$ can be written as a blocked diagonal matrix with diagonal blocks of the form $(\lambda([e_{\alpha},e_{\beta}]))_{\alpha, \beta \in X}$ for each subset $X$ of type (a1), (a2),(a3) or (b), and each block is of determinant $\pm 1$.
%
Thus we have proved Theorem \ref{mainThm1} (1) for type $C$.

\end{proof}

\begin{lemma} \label{C(2)}
Let $d = (i_1,i_2, \cdots, i_k) \in \Delta^o \backslash \Delta_{\text{spec}}$.
If $\operatorname{char} \mathbf k =2$, then $\det \mathscr G_{d,\lambda} =0$ for any $\lambda \in \operatorname{Hom}_{\mathbf k}(\mathfrak g_{d}(2), \mathbf k)$.
\end{lemma}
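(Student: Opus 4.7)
The plan is first to reduce to the case where $\lambda$ is faithful. By Lemma \ref{l_iso_faith}(1) combined with the invariance in (\ref{e_iso_eq}), choosing a faithful representative does not change whether $\det \mathscr G_{d,\lambda}$ vanishes. Faithfulness then gives the factorisation (\ref{e_prod}),
$$\det \mathscr G_{d,\lambda} = \pm \left(\prod_{l=2}^{k-1} \det M_l\right)^2 \det M_k,$$
so it suffices to prove $\det M_k = 0$ when $\operatorname{char} \mathbf k = 2$ and $d \notin \Delta_{\text{spec}}$.

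Next I would unpack the failure of the specialness criterion $(\spadesuit_C)$: the failure splits into two cases, namely (i) $s_1$ is odd, or (ii) there is a maximal constant run $s_u = s_{u+2} = \cdots = s_v$ with $u \equiv v \equiv k-1$, $s_{u-2} < s_u$, $s_v < s_{v+2}$, and $s_{v+1} - s_{u-1}$ odd. I would then write down the explicit block decomposition of $M_k$ using the staircase indexing of $Q_{k-1}$ displayed in the proof of Lemma \ref{C(1)} together with $Y_{d,k}$, which consists of the roots involving the long simple root $2\varepsilon_n$. Faithfulness forces the nonzero entries of $M_k$ to lie exactly in the pattern prescribed by the matrix $S$ of Proposition \ref{p_det_D} with $\Gamma = 2n$: the short-root pairs $\varepsilon_s \pm \varepsilon_t$ (with $t > i_k$) correspond to the two $n$-blocks of the $E^{r,s}_{i,j}$ pieces, while the staircase diagonal blocks correspond to the $A_r$ pieces, where $m_r$ tracks the interval lengths dictated by the run structure of $(s_2, s_4, \dots)$.

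The final step is a parity translation. In case (i), one of the row-counts $m_r$ equals $s_1$ which is odd, so Proposition \ref{p_det_D}(2) applies directly. In case (ii), the bad constant run forces an index $\sigma$ for which $k_\sigma$ is odd and $m_\sigma < 2n$, with all $m_r$ even: the required parities are controlled precisely by $s_{v+1} - s_{u-1}$ being odd together with $s_\sigma \equiv 0$ on non-terminal runs. This triggers Proposition \ref{p_det_D}(1). In either case one concludes that the corresponding $\det S = 0$, and hence $\det M_k = 0$, which yields $\det \mathscr G_{d,\lambda} = 0$.

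The main obstacle I anticipate is precisely this final bookkeeping step: verifying that the staircase structure of $Q_{k-1} \cup Y_{d,k}$ genuinely satisfies the containment $A_r \prec A_s$ for $r<s$ required by Proposition \ref{p_det_D}, and that non-specialness translates cleanly into either ``some $m_\sigma$ odd'' or ``some $k_\sigma$ odd with $m_\sigma<2n$''. The analysis will split further according to the parity of $s_k$ (the cases (a) and (b) encountered in the proof of Lemma \ref{C(1)}), and the configuration of $Y_{d,k}$ relative to $Q_{k-1}$ differs slightly between them; I expect to handle each parity of $s_k$ separately, but with the same Proposition \ref{p_det_D} reduction providing the vanishing in both.
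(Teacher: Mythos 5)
Your proposal follows essentially the same route as the paper: reduce to a faithful $\lambda$ via Lemma \ref{l_iso_faith} and (\ref{e_iso_eq}), invoke the factorization (\ref{e_prod}), and then recognize $M_k$ as a matrix of the form $S$ in Proposition \ref{p_det_D} whose vanishing hypotheses are exactly the failure of $(\spadesuit_C)$. The only differences are organizational: the paper fixes $k$ even (so $s_1$ is automatically even and only Proposition \ref{p_det_D}(1) is invoked, the odd-$k$ case being declared similar), whereas you split according to which clause of the specialness criterion fails and also call on part (2); note only that your parenthetical description of $Y_{d,k}$ as ``the roots involving the long simple root $2\varepsilon_n$'' is inaccurate --- it is the set of $\alpha\in\Phi_{d,1}$ with $[\alpha:\alpha_{i_k}]=1$, which also contains roots $\varepsilon_s-\varepsilon_t$ not supported on $\alpha_n$.
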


\begin{proof}
Suppose that there exists $\lambda \in \operatorname{Hom}_{\mathbf k}(\mathfrak g_{d}(2), \mathbf k)$  such that $\det \mathscr G_{d,\lambda} \ne 0$. We can assume that $\lambda$ is faithful by
(\ref{e_iso_eq}) and Lemma \ref{l_iso_faith}. 
So by (\ref{e_prod}), $\det \mathscr G_{d,\lambda} \ne 0$ implies that $\det M_k \ne 0$, where $M_k$ is defined as in (\ref{e_MK}). In the following we assume that $k$ is even without loss of generality.

For each $l\in [i_{k-1}+1,i_k]$, there exist two even integers $u \leq v$ in $[0,k-2]$,  which satisfy $$s_{u-2}<s_u=s_{u+2}=\dots=s_{v-2}=s_v<s_{v+2},$$
and $i_{k-1}+s_u< l\leq i_{k-1}+s_v$. We denote $\Xi_l=[i_{k-2}+1, i_{k-2}+s_{v+1}]$.
Since $d$ is not special,
there exist $\zeta<\eta$ in $[1,k]$ which satisfy $$s_{\zeta-2}<s_\zeta=s_{\zeta+2}=\dots=s_\eta<s_{\eta+2},$$ such that $s_\eta-s_\zeta$ is odd and
$\Xi_{s_\zeta+1}=\Xi_{s_\zeta+2}=\dots = \Xi_{s_\eta}$.

For each fixed $s\in [i_{k-1}+1,i_k]$, we set
$$B_s= \lambda ([e_{\varepsilon_r-\varepsilon_s}, e_{\varepsilon_s-\varepsilon_l} ])_{r\in \Xi_s, l\in [i_k+1,n]}\  \ \text{and} \ \  C_s= \lambda ([e_{\varepsilon_r-\varepsilon_s}, e_{\varepsilon_s+\varepsilon_l} ])_{r\in \Xi_s, l\in [i_k+1,n]}.$$
Then we put $A_s=(B_s, C_s)$.

For any $s,t\in [i_{k-1}+1,i_k]$, let $E_{s,t}$ be the submatrix of $M_k$ corresponding to the set of roots
$$\{\varepsilon_s-\varepsilon_l,\  \varepsilon_s+\varepsilon_l, \ \varepsilon_t-\varepsilon_l, \ \varepsilon_t+\varepsilon_l\mid l\in [i_k+1,n] \}.$$

 With a good ordering in $Q_k\cup Y_{d,k}$ the matrix $M_k$ has the same form of $S$  in Proposition 
 \ref{p_det_D}(1). Since $d$ is not special, all conditions in Proposition 
 \ref{p_det_D}(1) are satisfied. Thus we have $\det M_k =0$. We get a contradiction which completes the proof of the lemma.
\end{proof}

Combining Lemma \ref{C(1)} and Lemma \ref{C(2)}, we have proved Theorem \ref{mainThm2} in the type $C$ case.

\section{Type $B$}

\setcounter{theorem}{1}
\subsection{Special odd weights of type $B$}
Let $\mathfrak g$ be the simple Lie algebra of type $B_n$ over $\mathbf k$.
As before, we can identify $\Delta^o$ with the set of sequences $(i_1, i_2,\cdots, i_k)$ of integers, satisfying the following (1)-(4):
\begin{enumerate}
\item $1 \leq i_1<i_2< \cdots <i_k \leq n$,
\item $s_l \leq s_{l+2}$ for $l \in [1,k-2]$,
\item $s_l \equiv 0$ for $l \in [1,k]$ with $l \equiv k$,
\item $s_{k-1} \leq 2(n-i_k)+1$,
\end{enumerate}
where $s_1 =i_1$ and $s_l = i_l-i_{l-1}$ for $l \in [2,k]$,
through identifying $d \in \Delta^o$ with $(i_1,i_2,\cdots,i_k)$ where $\{i_l| l \in [1,k]\} = \{ i \in [1,n]| d(\alpha_i) =1\}$.

\medskip

Let $d = (i_1,i_2,\cdots,i_k) \in \Delta^o$. As before, we set $s_1 =i_1$ and $s_l = i_l-i_{l-1}$ for $l \in [2,k]$. By the condition $(\spadesuit_B)$ in Section \ref{WDD}, it is not difficult to see that  $d\in \Delta^{o}$ is special if it satisfies  following conditions.

\noindent (a) $k$ is even and $s_1$ is odd;

\noindent (b) For any two even integers $u \leq v$ in $[1,k-2]$, if $$s_{u-2}<s_u=s_{u+2}=\dots=s_{v-2}=s_v<s_{v+2},$$ then $s_{v+1}-s_{u-1}$ is even.

\begin{lemma}
Let $d = (r) \in \Delta^o$, which is not special. If $\operatorname{char} \mathbf k =2$, then $\det \mathscr G_{d,\lambda} =0$ for any $\lambda \in \operatorname{Hom}(\mathfrak g_{d}(2), \mathbf k)$.
\end{lemma}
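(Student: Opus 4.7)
The constraint $s_1 = r \equiv 0$ in the definition of $\Delta^o$ forces $r$ to be even, so $r \geq 2$, and the associated elementary divisor sequence is $\mathfrak{d} = (1^{2(n-r)+1}, 2^r)$. Condition $(\spadesuit_B)$ fails because no odd divisor appears after the largest even divisor, confirming $d \notin \Delta_{\text{spec}}$. Since $d(\alpha)$ equals the multiplicity of $\alpha_r$ in $\alpha$, a direct computation yields
\[
\Phi_{d,1} = \{\varepsilon_i \pm \varepsilon_j \mid i \in [1,r],\ j \in [r+1,n]\} \cup \{\varepsilon_i \mid i \in [1,r]\}, \quad \Phi_{d,2} = \{\varepsilon_i + \varepsilon_{i'} \mid 1 \leq i < i' \leq r\}.
\]

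Next I would enumerate pairs $\alpha, \beta \in \Phi_{d,1}$ with $\alpha + \beta \in \Phi_{d,2}$. A short case check in type $B_n$ shows that only two types occur: pairs $\alpha = \varepsilon_i - \varepsilon_j$, $\beta = \varepsilon_{i'} + \varepsilon_j$ with $i \neq i'$, for which $[e_\alpha, e_\beta] = \pm e_{\varepsilon_i + \varepsilon_{i'}}$; and pairs $\alpha = \varepsilon_i$, $\beta = \varepsilon_{i'}$ with $i \neq i'$ in $[1,r]$. In the second case the $\varepsilon_{i'}$-string through $\varepsilon_i$ is $\{\varepsilon_{i'} - \varepsilon_i, \varepsilon_{i'}\}$ (since $\varepsilon_{i'} - 2\varepsilon_i$ is not a root of $B_n$), so by the Chevalley normalization recalled in Section 1.3,
\[
[e_{\varepsilon_i}, e_{\varepsilon_{i'}}] = \pm 2\, e_{\varepsilon_i + \varepsilon_{i'}}.
\]
Crucially, $[e_{\varepsilon_i}, e_\gamma]$ does not contribute to $\mathfrak{g}_d(2)$ for any long root $\gamma \in \Phi_{d,1}$: in each case $\varepsilon_i + \gamma$ would have three non-zero $\varepsilon$-components, which is impossible for a root of $B_n$.

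When $\operatorname{char} \mathbf{k} = 2$, the coefficient $\pm 2$ vanishes, so for every $i \in [1,r]$ and every $\lambda \in \operatorname{Hom}_{\mathbf{k}}(\mathfrak{g}_d(2), \mathbf{k})$ the vector $e_{\varepsilon_i}$ lies in the radical of $\sigma_\lambda$. The $r$ rows of $\mathscr{G}_{d,\lambda}$ indexed by $\{e_{\varepsilon_i} \mid i \in [1,r]\}$ are therefore all zero, so $\det \mathscr{G}_{d,\lambda} = 0$. The only genuine work is the root-sum case check that identifies which brackets contribute to $\mathfrak{g}_d(2)$; the general machinery of faithful maps and the block decomposition of Section 4 is not needed here, reflecting the small size of the weighted Dynkin diagram $d = (r)$.
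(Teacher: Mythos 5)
Your proof is correct and follows essentially the same route as the paper: both arguments reduce to the observation that the only brackets landing in $\mathfrak g_d(2)$ which involve the short-root vectors $e_{\varepsilon_i}$ are $[e_{\varepsilon_i},e_{\varepsilon_{i'}}]=\pm 2e_{\varepsilon_i+\varepsilon_{i'}}$, so in characteristic $2$ the corresponding block of $\mathscr G_{d,\lambda}$ (your ``zero rows'', the paper's block $B$) vanishes and the determinant is $0$.
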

\begin{proof} We have $[e_{\varepsilon_i-\varepsilon_k},e_{\varepsilon_j+\varepsilon_k}]=\pm e_{\varepsilon_i+\varepsilon_j}$ and $[e_{\varepsilon_i}, e_{\varepsilon_j}]=\pm 2e_{\varepsilon_i+\varepsilon_j}$ for $i\ne j \in [1,n]$. So for $\lambda \in \operatorname{Hom}(\mathfrak g_{d}(2), \mathbf k)$,  we denote
$$A_{\lambda,k}=(\lambda([e_{\varepsilon_i-\varepsilon_k},e_{\varepsilon_j+\varepsilon_k}]))_{(i,j)\in [1,r]\times[1,r]}$$
for ${k\in [r+1,n]}$. Set $b_{ij}=\lambda([e_{\varepsilon_i}, e_{\varepsilon_j}])$ for $i, j\in [1,r]$. Thus the matrix $\mathscr G_{d,\lambda}$ is
$$\begin{pmatrix}
&0              & A        & 0      \\
&-{^tA}          & 0       & 0      \\
&0               & 0       & B
\end{pmatrix},$$
where $A = \operatorname{Diag}(A_{\lambda,r+1}, \cdots, A_{\lambda,n})$ and $B=(b_{ij})$. Thus
$\det \mathscr G_{d,\lambda} = 0$ as $\det B = 0$.

\end{proof}

\begin{lemma}\label{B(1)}
Let $d = (i_1,i_2, \cdots, i_k) \in \Delta^o \cap \Delta_{\text{spec}}$. Then there exists a non-degenerate $\lambda \in \operatorname{Hom}_{\mathbb Z}( \mathfrak g_{\mathbb Z,d}(2),\mathbb Z)$.
\end{lemma}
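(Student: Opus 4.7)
The plan is to construct a specific faithful $\lambda \in \operatorname{Hom}_{\mathbb Z}(\mathfrak g_{\mathbb Z,d}(2), \mathbb Z)$ in the same spirit as in the proof of Lemma~\ref{C(1)} for type $C$, and then verify $\det \mathscr G_{d,\lambda} \in \{\pm 1\}$ via the factorization~\eqref{e_prod}. Recall that for $d \in \Delta^o \cap \Delta_{\text{spec}}$ of type $B$, $k$ is automatically even and $s_1$ is odd, which affects the parities appearing in the construction.

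First, I would define auxiliary sets $X_l \subseteq \Phi_{d,2}$ for $l \in [1, k-2]$ by
\[ X_l = \{\varepsilon_s - \varepsilon_t \in \Phi_{d,2} \mid s \leq i_l < i_{l+1} < t,\ t - s = i_{l+1} - i_{l-1}\}, \]
exactly as in the type $C$ case. For $l = k-1$ and $l = k$, I would tailor $X_{k-1}$ and $X_k$ to the type $B$ setting, where $Y_{d,k}$ additionally contains the short roots $\varepsilon_s$ for $s \in [i_{k-1}+1, i_k]$. The key new feature is that each such short root $\varepsilon_s$ brackets nontrivially with the long root $\varepsilon_{s''} - \varepsilon_s \in Q_{k-1}$ to produce the short weight-$2$ root $\pm e_{\varepsilon_{s''}}$, while bracketing $\varepsilon_s$ with another short root in $Y_{d,k}$ yields a factor of $2$ (since $[e_{\varepsilon_s}, e_{\varepsilon_{s'}}] = \pm 2\,e_{\varepsilon_s + \varepsilon_{s'}}$) and bracketing with a long root already inside $Y_{d,k}$ gives zero. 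Accordingly, $X_k$ must include both short weight-$2$ roots $\varepsilon_{s''}$ and appropriate long weight-$2$ roots so that, after setting $\lambda(e_\alpha) = 1$ for $\alpha \in \mathbb X := \bigcup_{l=1}^k X_l$ and $\lambda(e_\alpha) = 0$ otherwise, the map $\lambda$ is faithful.

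With $\lambda$ faithful, \eqref{e_prod} reduces the problem to showing $\det M_l \in \{\pm 1\}$ for $l \in [2, k-1]$ and $\det M_k \in \{\pm 1\}$. The factors for $l \in [2, k-1]$ follow by the same combinatorics as in type $C$, since $P_{l-1}$ and $Q_l$ involve only long roots for $l \leq k-1$.

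The main obstacle is the computation of $\det M_k$, where short roots appear. I plan to decompose $Q_{k-1} \cup Y_{d,k}$ into quadruples of the (a1), (a2), (a3) types used in Lemma~\ref{C(1)}, supplemented by new quadruples that group each short root $\varepsilon_s \in Y_{d,k}$ with its bracket-partner $\varepsilon_{s''} - \varepsilon_s \in Q_{k-1}$ (together with two companion long roots, yielding a $4 \times 4$ block of determinant $\pm 1$). Crucially, two short roots must never share a block, since the factor of $2$ in $[e_{\varepsilon_s}, e_{\varepsilon_{s'}}]$ would ruin non-degeneracy over $\mathbb Z$; the specialness hypothesis---namely $s_l \equiv 0$ for even $l$ together with the parity condition $s_{v+1} - s_{u-1} \equiv 0$ within equal-valued runs $s_u = s_{u+2} = \cdots = s_v$ (with $u, v$ even)---is exactly what permits a consistent short-with-long pairing throughout the block. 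Once the decomposition is in place, each sub-block has determinant $\pm 1$ by direct expansion, yielding $\det M_k \in \{\pm 1\}$ and completing the proof.
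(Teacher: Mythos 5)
Your overall skeleton matches the paper's: pass to a faithful $\lambda$, use the factorization $\det \mathscr G_{d,\lambda} = \pm (\prod_{l=2}^{k-1}\det M_l)^2\det M_k$ from (\ref{e_prod}), and decompose $Q_{k-1}\cup Y_{d,k}$ into quadruples so that $M_k$ becomes block diagonal with blocks of determinant $\pm 1$. You have also correctly identified the new structural features of type $B$: the short roots $\varepsilon_s$, $s\in[i_{k-1}+1,i_k]$, lie in $Y_{d,k}$, they couple to $Q_{k-1}$ through short weight-$2$ roots $\varepsilon_{s''}$, and two short roots couple to each other with a coefficient $2$. However, the step where you handle these short roots --- which is exactly the step that is new in type $B$ --- contains a genuine gap. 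Your guiding principle, that two short roots must never share a block because the factor $2$ in $[e_{\varepsilon_s},e_{\varepsilon_{s'}}]=\pm 2e_{\varepsilon_s+\varepsilon_{s'}}$ would ruin non-degeneracy over $\mathbb Z$, is false, and the paper's construction is a direct counterexample: it turns on $\lambda$ at the single short weight-$2$ root $\varepsilon_{i_{k-2}+1}$ and groups the short roots in consecutive pairs, each block being $\{\varepsilon_{i_{k-2}+1}-\varepsilon_r,\ \varepsilon_r,\ \varepsilon_{r+1},\ \varepsilon_{i_{k-2}+1}-\varepsilon_{r+1}\}$ with Gram block
\begin{equation*}
\begin{pmatrix} 0&x&0&0\\ -x&0&2y_r&0\\ 0&-2y_r&0&x\\ 0&0&-x&0\end{pmatrix},
\end{equation*}
whose determinant is $x^4=1$: the even entry $2y_r$ sits in a position that contributes nothing to the Pfaffian. (Note also that the number $s_k$ of short roots in $Y_{d,k}$ is even for \emph{every} $d\in\Delta^o$ of type $B$, special or not; specialness is not what makes the short roots pair up, but rather what makes the set $\Omega$ even and the long roots in $X_k$ fit together, as in type $C$.)

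More seriously, the alternative you propose --- pairing each short root $\varepsilon_s$ individually with a partner $\varepsilon_{s''}-\varepsilon_s\in Q_{k-1}$ --- does not yield a block decomposition at all. To make $\lambda([e_{\varepsilon_{s''}-\varepsilon_s},e_{\varepsilon_s}])=\pm\lambda(e_{\varepsilon_{s''}})$ nonzero you must set $\lambda(e_{\varepsilon_{s''}})\neq 0$, but then for every other short root $\varepsilon_{s'}$ the entry $\lambda([e_{\varepsilon_{s''}-\varepsilon_{s'}},e_{\varepsilon_{s'}}])$ is nonzero as well, coupling $\varepsilon_{s'}$ to a row you have assigned to a different block; using several distinct $s''$ only multiplies these cross-block couplings. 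Moreover, the long roots $\varepsilon_j\pm\varepsilon_l\in Y_{d,k}$ ($l>i_k$) can only be made non-degenerate by taking $\lambda(e_{\varepsilon_j+\varepsilon_{j'}})\neq 0$ for some $j'\neq j$ in $[i_{k-1}+1,i_k]$, and any such choice forces the $2\lambda(e_{\varepsilon_j+\varepsilon_{j'}})$ coupling between the two short roots $\varepsilon_j$ and $\varepsilon_{j'}$, so a coupling between two short roots is unavoidable. As written, the crucial claim $\det M_k\in\{\pm 1\}$ is therefore not established; you would either have to abandon the one-short-root-per-block principle (and recover essentially the paper's type (a) blocks), or prove $\det M_k=\pm 1$ by a genuinely non-block argument, which you do not supply.
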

\begin{proof} Since $d$ is special we see that $k$ is even.
We look for the  $\lambda$ such that  $\lambda$ is faithful.
For $k\geq 2$, following the strategy in Section \ref{s_faithful_map},
we set
$$M_{l+1} = (\lambda([e_{\alpha},e_{\beta}]))_{\alpha \in Q_l,\beta \in P_l},~~\text{for $l\in [1,k-2]$}$$
and $M_k = (\lambda([e_{\alpha},e_{\beta}]))_{\alpha, \beta \in Q_{k-1} \cup Y_{d,k}}$. Then by (\ref{e_prod}),
$$\det \mathscr G_{d,\lambda} = \pm (\prod_{l=2}^{k-1} \det M_l)^2 \det M_k.$$

For $l\in [1,k-2]$, put
$$X_{l}=\{\varepsilon_s-\varepsilon_t\in \Phi_{d,2}\mid s\leq i_{l}<i_{l+1}<t~\text{and}~ t-s=i_{l+1}-i_{l-1}\} ,$$
and
$$X_{k-1}=\{\varepsilon_j+\varepsilon_{j+1} \in\Phi_{d,2}\mid j\in [i_{k-1}+1,i_{k}-1]~\text{and}~ j\equiv i_{k-1}+1\} .$$
To compute $ \det M_k$, we need to consider the set $Q_{k-1}$ which has the following form

\begin{center}
\resizebox{\textwidth}{20mm}{
\begin{tabular}{r cccc cccc ccccc cccc ccccc}
    & $i_{k-1}$ & & & &
    $i_{k-1}+s_2$  &  & & &
    $i_{k-1}+s_4$ &&  $\cdots$ &&&
    $i_{k-1}+s_{k-4}$ & &&&
    $i_{k-1}+s_{k-2}$ &&&& $i_{k}-1$ \\
    \cline{2-23}
$i_{k-2}+1$ & \multicolumn{4}{|c;{4pt/4pt}}{\multirow{3}{*}{   }} &  \multicolumn{4}{c;{4pt/4pt}}{\multirow{3}{*}{  }} &
\multicolumn{5}{c;{4pt/4pt}}{\multirow{8}{*}{  }} &
\multicolumn{4}{c;{4pt/4pt}}{\multirow{9}{*}{ $Q_{k-1}$   }} &
\multicolumn{5}{c|}{\multirow{12}{*}{  }}
\\
& \multicolumn{4}{|c;{4pt/4pt}}{\multirow{3}{*}{ }} &
\multicolumn{4}{c;{4pt/4pt}}{\multirow{3}{*}{  }} &
\multicolumn{5}{c;{4pt/4pt}}{\multirow{8}{*}{  }} &
\multicolumn{4}{c;{4pt/4pt}}{\multirow{9}{*}{  }} &
\multicolumn{5}{c|}{\multirow{12}{*}{  }}
\\
& \multicolumn{4}{|c;{4pt/4pt}}{\multirow{3}{*}{ }} &
\multicolumn{4}{c;{4pt/4pt}}{\multirow{3}{*}{  }} &
\multicolumn{5}{c;{4pt/4pt}}{\multirow{8}{*}{  }} &
\multicolumn{4}{c;{4pt/4pt}}{\multirow{9}{*}{  }} &
\multicolumn{5}{c|}{\multirow{12}{*}{  }}
\\

\cline{2-5}
$i_{k-2}+s_{1}+1$ & \multicolumn{4}{|c|}{\multirow{3}{*}{ }} &
\multicolumn{4}{c;{4pt/4pt}}{\multirow{3}{*}{ }} &
\multicolumn{5}{c;{4pt/4pt}}{\multirow{8}{*}{ }} &
\multicolumn{4}{c;{4pt/4pt}}{\multirow{9}{*}{ }} &
\multicolumn{5}{c|}{\multirow{12}{*}{ }}
\\
& \multicolumn{4}{|c|}{\multirow{3}{*}{ }} &
\multicolumn{4}{c;{4pt/4pt}}{\multirow{3}{*}{ }} &
\multicolumn{5}{c;{4pt/4pt}}{\multirow{8}{*}{ }} &
\multicolumn{4}{c;{4pt/4pt}}{\multirow{9}{*}{ }} &
\multicolumn{5}{c|}{\multirow{12}{*}{ }}
\\
& \multicolumn{4}{|c|}{\multirow{3}{*}{ }} &
\multicolumn{4}{c;{4pt/4pt}}{\multirow{3}{*}{ }} &
\multicolumn{5}{c;{4pt/4pt}}{\multirow{8}{*}{ }} &
\multicolumn{4}{c;{4pt/4pt}}{\multirow{9}{*}{ }} &
\multicolumn{5}{c|}{\multirow{12}{*}{ }}
\\

\cdashline{2-5} \cline{6-9}
$i_{k-2}+s_3+1$ & \multicolumn{8}{|c;{1pt/1pt}}{\multirow{3}{*}{$P_{k-2}$ }} &
\multicolumn{5}{c;{4pt/4pt}}{\multirow{8}{*}{ }} &
\multicolumn{4}{c;{4pt/4pt}}{\multirow{9}{*}{ }} &
\multicolumn{5}{c|}{\multirow{12}{*}{ }}
\\
\cline{10-11}
$\cdots$  & \multicolumn{10}{|c;{1pt/1pt}}{\multirow{3}{*}{ }} &
\multicolumn{3}{c;{4pt/4pt}}{\multirow{8}{*}{ }} &
\multicolumn{4}{c;{4pt/4pt}}{\multirow{9}{*}{ }} &
\multicolumn{5}{c|}{\multirow{12}{*}{ }}
\\
\cline{12-14}
 & \multicolumn{13}{|c|}{\multirow{3}{*}{ }} &
\multicolumn{4}{c;{4pt/4pt}}{\multirow{9}{*}{ }} &
\multicolumn{5}{c|}{\multirow{12}{*}{ }}
\\
\cdashline{2-14} \cline{15-18}
$i_{k-2}+s_{k-3}+1$ & \multicolumn{17}{|c|}{\multirow{3}{*}{ }} &
\multicolumn{5}{c|}{\multirow{12}{*}{ }}
\\
& \multicolumn{17}{|c|}{\multirow{3}{*}{ }} &
\multicolumn{5}{c|}{\multirow{12}{*}{ }}
\\
$i_{k-1}$ & \multicolumn{17}{|c|}{\multirow{3}{*}{ }} &
\multicolumn{5}{c|}{\multirow{12}{*}{ }}
\\
\cline{2-23}
\end{tabular}
}
\end{center}

\smallskip

Since $d\in \Delta^o$ is special, if there exists two even integers $u \leq v$ in $[0,k-2]$ satisfying
\begin{eqnarray} \label{e_cond_s}
s_{u-2}<s_u=s_{u+2}=\dots=s_{v-2}=s_v<s_{v+2},
\end{eqnarray}
then $s_{v+1}-s_{u-1}$ is even.
For $u \leq v$ in $[0,k-2]$ satisfying (\ref{e_cond_s}), we denote
$$\Omega_{u,v}=[i_{k-2}+s_{u-1}+1,\ i_{k-2}+s_{v+1} ],$$
and set
$$\Omega=\bigcup_{u \leq v \text{ in } [0,k-2] \text{ with (\ref{e_cond_s})}} \Omega_{u,v} \cup[i_{k-2}+2,i_{k-2}+s_1]. $$
The cardinality $\omega=|\Omega|$ is even. We denote
$$\Omega=\{j_1,j_2,\dots, j_\omega\}.$$
with $j_{r}<j_{s}$ for $r<s$ and set

$$X_{k}= \{\varepsilon_{j_s}+(-1)^s \varepsilon_{i_k+\lfloor\frac{s+1}{2}\rfloor}\mid s\in [1,\omega]\}.$$
With these settings, we denote $\mathbb{X} =\cup_{l = 1}^k X_{l} \cup \{\varepsilon_{i_{k-2}+1}\}$  which is a subset of $\Phi_{d,2}$.

Let $\lambda: \mathfrak g_{\mathbb Z,d}(2) \rightarrow \mathbb Z$ be the homomorphism such that
$$\lambda(e_{\alpha}) =
\begin{cases}
1 & \text{ if } \alpha \in \mathbb{X} , \\
0 & \text{ if } \alpha \in \Phi_{d,2} \backslash \mathbb{X} .
\end{cases}$$
In the following we show that $\det \mathscr G_{d,\lambda} \in \{ \pm 1\}$.

Obviously, $\lambda$ is faithful in the sense of Section \ref{ssfai}, then
by (\ref{e_prod}),
$$\det \mathscr G_{d,\lambda} = \pm (\prod_{l=2}^{k-1} \det M_l)^2 \det M_k.$$
It is easy to check that
$\det M_l \in \{ \pm 1\}$ for $l \in [2,k-1]$.

To compute $\det M_k$, we put
\begin{eqnarray}
x& = & \lambda(\varepsilon_{i_{k-2}+1}), \nonumber \\
y_r& = &\lambda(\varepsilon_r+\varepsilon_{r+1}), \nonumber \\
z_s & = & \lambda(\varepsilon_{j_s}+(-1)^s \varepsilon_{i_k+\lfloor\frac{s+1}{2}\rfloor}), \nonumber
\end{eqnarray}
for $r\in [i_{k-1}+1,i_{k}-1]$ with $r\equiv i_{k-1}+1$, and $s\in [1,\omega]$.

As
$$ M_k = (\lambda([e_{\alpha},e_{\beta}]))_{\alpha, \beta \in Q_{k-1} \cup Y_{d,k}},$$
we divide $Q_{k-1} \cup Y_{d,k}$ into disjoint subsets and consider the submatrices of $M_k$ corresponding to these subsets.

\medskip

\noindent (a) The first type of subsets are of the form
$$\{\varepsilon_{i_{k-2}+1}-\varepsilon_r,\ \varepsilon_r, \ \varepsilon_{r+1},\  \varepsilon_{i_{k-2}+1}-\varepsilon_{r+1} \}$$
for $r\in [i_{k-1}+1,i_{k}-1]$ with $r\equiv i_{k-1}+1$.

\medskip

\noindent (b) The second type of subsets are of the form
$$\{\varepsilon_{j_s}-\varepsilon_l,\  \varepsilon_{l}+(-1)^s \varepsilon_{i_k+\lfloor\frac{s+1}{2}\rfloor},
\ \varepsilon_{l+1}+(-1)^{s+1} \varepsilon_{i_k+\lfloor\frac{s}{2}\rfloor+1},
\ \varepsilon_{j_{s+1}}-\varepsilon_{l+1}\},$$
or
$$\{\varepsilon_{j_s}-\varepsilon_{l+1},\  \varepsilon_{l+1}+(-1)^s \varepsilon_{i_k+\lfloor\frac{s+1}{2}\rfloor},
\ \varepsilon_{l}+(-1)^{s+1} \varepsilon_{i_k+\lfloor\frac{s}{2}\rfloor+1},
\ \varepsilon_{j_{s+1}}-\varepsilon_{l}\},$$
where $s\in [1,\omega]$ and $l\in [i_{k-1}+1,i_{k}-1]$ satisfy
$s\equiv 1$, $l\equiv i_{k-1}+1$ and $\varepsilon_{j_\sigma}-\varepsilon_\tau \in Q_{k-1}$ for $\sigma\in [1,\omega]$ and $\tau\in [i_{k-1}+1,i_{k}-1]$.

\medskip

\noindent (c) The set of the remaining roots in $Q_{k-1} \cup Y_{d,k}$ can be divided into subsets of the form
$$\{\varepsilon_j-\varepsilon_l,\ \varepsilon_{j+1}+\varepsilon_l,\  \varepsilon_j+\varepsilon_l, \ \varepsilon_{j+1}-\varepsilon_l \}.$$

\medskip

Let $X$ be a subset of $Q_{k-1} \cup Y_{d,k}$ of type (a), (b) or (c). Then
\begin{eqnarray}
(\lambda([e_{\alpha},e_{\beta}]))_{\alpha, \beta \in X}  =
\begin{cases}
\begin{pmatrix}
&0   & x   & 0  & 0  \\
&-x  & 0   & 2y_r   & 0   \\
&0   & -2y_r    & 0  & x   \\
&0   & 0    & -x  & 0
\end{pmatrix} & \text{ if } X \text{ is of type (a)}, \\
\begin{pmatrix}
&0   & z_s   & 0  & 0  \\
&-z_s  & 0   & y_l   & 0   \\
&0   & -y_l    & 0  & z_{s+1}  \\
&0   & 0    & -z_{s+1}  & 0
\end{pmatrix} & \text{ if } X \text{ is of type (b)}, \\
\begin{pmatrix}
&0   &y_j  &0   &0 \\
&-y_j  &0  &0   &0 \\
&0   &0   &0   &y_j \\
 &0   &0 &-y_j  &0
\end{pmatrix} & \text{ if } X \text{ is of type (c)}.
\end{cases} \nonumber
\end{eqnarray}

Then $M_k$ can be written as a blocked diagonal matrix with diagonal blocks of the form $(\lambda([e_{\alpha},e_{\beta}]))_{\alpha, \beta \in X}$ for each subset $X$ of type (a), (b) or (c), and each block is of determinant $\pm 1$.
We see that $\det M_k \in \{ \pm 1\}$ which implies $\det \mathscr G_{d,\lambda} \in \{ \pm 1\}$. Thus we have proved Theorem \ref{mainThm1} (1) for type $B$.

\end{proof}

\begin{lemma} \label{B(2)}
Let $d = (i_1,i_2, \cdots, i_k) \in \Delta^o \backslash \Delta_{\text{spec}}$.
If $\operatorname{char} \mathbf k =2$, then we have $\det \mathscr G_{d,\lambda} =0$ for any $\lambda \in \operatorname{Hom}(\mathfrak g_{d}(2), \mathbf k)$
 .
\end{lemma}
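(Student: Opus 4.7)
The plan is to adapt the argument of Lemma \ref{C(2)}, with Proposition \ref{p_det_D}(1) replaced by Proposition \ref{p_det_D}(3), so as to account for the short roots present in type $B$.

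First, arguing by contradiction, suppose some $\lambda \in \operatorname{Hom}(\mathfrak g_d(2), \mathbf k)$ satisfies $\det \mathscr G_{d,\lambda} \neq 0$. By (\ref{e_iso_eq}) and Lemma \ref{l_iso_faith}, I may replace $\lambda$ by a faithful element of $\operatorname{Hom}(\mathfrak g_d(2), \mathbf k)$ without changing whether $\det \mathscr G_{d,\lambda}$ vanishes. Then (\ref{e_prod}) reduces matters to showing $\det M_k = 0$, where $M_k = (\lambda([e_\alpha, e_\beta]))_{\alpha, \beta \in Q_{k-1} \cup Y_{d,k}}$.

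Next I would identify the block structure of $M_k$. In contrast to type $C$, the set $Y_{d,k}$ now also contains the short roots $\varepsilon_s$ for $s$ in the relevant index range, in addition to the long roots $\varepsilon_s \pm \varepsilon_l$ with $l \in [i_k+1,n]$. For each such $s$, group the pairings of $Q_{k-1}$-rows against these roots into a single row-block $A_s = (P_s, B_s, C_s)$, where $P_s$ records the pairings against $\varepsilon_s$ and $B_s$, $C_s$ record those against $\varepsilon_s - \varepsilon_l$ and $\varepsilon_s + \varepsilon_l$ respectively. After a suitable reordering of $Q_{k-1} \cup Y_{d,k}$, the matrix $M_k$ assumes precisely the form of the matrix $S$ of Proposition \ref{p_det_D} with $\Gamma = 2n+1$; the extra first column of each $A_s$ is the short-root piece $P_s$, which is exactly what distinguishes $\Gamma = 2n+1$ from the type $C$ case $\Gamma = 2n$.

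Finally, the crucial combinatorial step is to verify hypothesis (3) of Proposition \ref{p_det_D}, namely that some $m_\sigma$ is even. The sizes $m_\sigma$ are governed by the matching ranges $\Xi_s \subseteq Q_{k-1}$, defined in precise analogy with the $\Xi_l$ of Lemma \ref{C(2)}, and their parities are determined by the step differences $s_l$ and by $k \bmod 2$. Specialness ($\spadesuit_B$) forces $k$ even, $s_1$ odd, and, for every maximal run of equal even-indexed $s_l$, the corresponding difference $s_{v+1} - s_{u-1}$ even. Failure of any one of these conditions produces some even $m_\sigma$ by direct case analysis, whereupon Proposition \ref{p_det_D}(3) yields $\det M_k = 0$, giving the required contradiction.

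The main obstacle will be the last step. In type $B$ there are three genuinely different ways specialness can fail (parity of $k$, parity of $s_1$, and the run-parity condition), each of which must be shown to force an even $m_\sigma$. The short-root contribution $P_s$ subtly shifts the counting relative to type $C$, so the bookkeeping must be carried out case by case rather than obtained by a direct transcription of the Lemma \ref{C(2)} argument.
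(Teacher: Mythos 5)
Your proposal follows essentially the same route as the paper's proof: reduce to a faithful $\lambda$ via (\ref{e_iso_eq}) and Lemma \ref{l_iso_faith}, reduce to $\det M_k$ via (\ref{e_prod}), reorder $Q_{k-1}\cup Y_{d,k}$ so that $M_k$ takes the form of the matrix $S$ in Proposition \ref{p_det_D}(3) with $\Gamma=2n+1$ (the extra column coming from the short roots $\varepsilon_s$), and then use non-specialness to produce an even $|\Xi_s|=m_\sigma$. The paper likewise leaves the final parity bookkeeping largely to the reader (disposing of the $k$ odd case with "the argument is similar"), so your flagged obstacle is real but is no more of a gap than in the original.
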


\begin{proof}
Suppose that there exists $\lambda \in \operatorname{Hom}(\mathfrak g_{d}(2), \mathbf k)$  such that $\det \mathscr G_{d,\lambda} \ne 0$. Through the arguments as in Section \ref{s_faithful_map},  we can assume that $\lambda$  is faithful. So by (\ref{e_prod}), $\det \mathscr G_{d,\lambda} \ne 0$ implies that $\det M_k \ne 0$, where $M_k$ is defined as in (\ref{e_MK}).

We can assume that $k$ is even. When $k$ is odd, the shape of $Q_{k-1}$ is different
, but the argument is similar.
For each $l\in [i_{k-1}+1,i_k]$, there exist two even integers $u,v\in [2,k-2]$ which satisfy
$u<v$ and
$$s_{u-2}<s_u=s_{u+2}=\dots=s_{v-2}=s_v<s_{v+2},$$
with $i_{k-1}+s_u< l\leq i_{k-1}+s_v$. We denote $\Xi_l=[i_{k-2}+1, i_{k-2}+s_{v+1}]$.
Since $d$ is not special, there exists an integer $l\in [i_{k-1}+1,i_k]$ such that $|\Xi_l|$ is even.

For each fixed $s\in [i_{k-1}+1,i_k]$,
we set $$A_{s}=\lambda([e_{\varepsilon_r-\varepsilon_s},  e_\varpi ])$$
where $r\in \Xi_s $ and $\varpi$ runs over the set
$\{  \varepsilon_s, \ \varepsilon_s-\varepsilon_l,  \ \varepsilon_s+\varepsilon_l \mid   l\in [i_k+1,n]\}.$

For any $s,t\in [i_{k-1}+1,i_k]$, denote $E_{s,t}$ the block associate the set of roots
$$\{\varepsilon_s-\varepsilon_l,\  \varepsilon_s+\varepsilon_l, \ \varepsilon_t-\varepsilon_l, \ \varepsilon_t+\varepsilon_l\mid l\in [i_k+1,n] \}.$$

With a good ordering in $Q_k\cup Y_{d,k}$ the matrix $M_k$  has the same form of $S$ in the Proposition 
\ref{p_det_D}(3). Since $d$ is not special, there exists $s\in [i_{k-1}+1,i_k]$ such that $|\Xi_s|$ is even.  Then all conditions in Proposition 
 \ref{p_det_D}(3) are satisfied and  we have $\det M_k =0$.  We get a contradiction which proves the lemma.

\end{proof}

Combining Lemma \ref{B(1)} and Lemma \ref{B(2)}, we have proved Theorem \ref{mainThm2} in the type $B$ case.

\section{The first Case  of type $D$}

\setcounter{theorem}{1}
\subsection{Odd weights of the first case}
\setcounter{theorem}{1}
Let  $\mathfrak g$ be the simple Lie algebra of type $D_n$ over $\mathbf k$.
As before, we can write $\Delta^o = \Delta^o_1 \sqcup \Delta^o_2$, where $\Delta^o_1$ (resp. $\Delta^o_2$) consists of weighted Dynkin diagrams of the form (\ref{d_D1_1}) (resp. (\ref{d_D2_1})).
In this section and the next section, we consider $\Delta^o_1$ and $\Delta^o_2$ respectively.

The weighted Dynkin diagram  associated to the partition $(2^{2m}1^2)$ is special, in this case it is not difficult to see that there exists a non-degenerate $\lambda \in \operatorname{Hom}_{\mathbb Z}( \mathfrak g_{\mathbb Z,d}(2),\mathbb Z)$. Thus we exclude this situation in the following of this section.

The set $\Delta^o_1$ can be identified with the set of sequences $(i_1, i_2,\cdots, i_k)$ of integers, satisfying the following conditions (1)-(4):
\begin{enumerate}
\item $1 \leq i_1<i_2< \cdots <i_k \leq n-3$,
\item $s_l \leq s_{l+2}$ for $l \in [1,k-1]$,
\item $s_l \equiv 0$ for $l \in [1,k+1]$ with $l \equiv k+1$,
\item $s_l\leq 2$ for $l \in [1,k]$ with $l \equiv k$.
\end{enumerate}
where $s_1 =i_1$ and $s_l = i_l-i_{l-1}$ for $l \in [2,k]$ and $s_{k+1}=n-1-i_k$,
through identifying 
$d \in \Delta^o_1$ with $(i_1,i_2,\cdots,i_k)$ where $\{i_l| l \in [1,k]\} = \{ i \in [1,n]| d(\alpha_i) =1\}$.

Let $d = (i_1,i_2,\cdots,i_k) \in \Delta^o_1$. As before, we set $s_1 =i_1$, $s_l = i_l-i_{l-1}$ for $l \in [2,k]$ and $s_{k+1}=n-1-i_k$. By the condition $(\spadesuit_D)$ in Section \ref{WDD}, it is not difficult to see that  $d$ is special if and only if it satisfies  following conditions:

\noindent (a) $s_1$ is even;

\noindent (b) For any two integers $u\leq v\in [1,k]$ and $u\equiv v \equiv k-1$, if $$s_{u-2}<s_u=s_{u+2}=\dots=s_{v-2}=s_v<s_{v+2},$$ then $s_{v+1}-s_{u-1}$ is even.

\begin{lemma}\label{Dcase1}
Let $d = (i_1,i_2, \cdots, i_k) \in \Delta^o_1$.
\begin{enumerate}
\item If $d \in \Delta_{\text{spec}}$, then there exists a non-degenerate $\lambda \in \operatorname{Hom}_{\mathbb Z}( \mathfrak g_{\mathbb Z,d}(2),\mathbb Z)$.
\item If $\text{char}~\mathbf k =2$ and $d \not\in \Delta_{\text{spec}}$, then $\det \mathscr G_{d,\lambda} =0$ for any $\lambda \in \operatorname{Hom}(\mathfrak g_{d}(2), \mathbf k)$.
\end{enumerate}
\end{lemma}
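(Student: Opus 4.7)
The plan is to follow the template used for types $B$ and $C$, adapted to the $D_n$ fork at the end of the Dynkin diagram. By Lemma \ref{l_iso_faith} it suffices to treat a faithful $\lambda$, and the faithful-map reduction of Section \ref{s_faithful_map} yields
\[
\det \mathscr G_{d,\lambda} = \pm \Bigl(\prod_{l=2}^{k-1} \det M_l\Bigr)^{2} \det M_k.
\]
The interior blocks $M_l$ for $l \in [2,k-1]$ have the same shape as in type $C$; for the $\lambda$ I will construct they will have determinant $\pm 1$, and they play no role in part (2). The entire problem is therefore concentrated on the tail block $M_k$, and the degenerate cases $k \leq 2$ will be handled by direct inspection.

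For part (1), assume $d$ is special, so $s_1$ is even and the parity condition on $s_{v+1}-s_{u-1}$ holds across every maximal constant stretch. I would take a faithful $\lambda$ supported on a set $\mathbb X = \bigcup_{l=1}^{k} X_l$, where $X_l$ for $l \in [1,k-1]$ is defined verbatim as in the proof of Lemma \ref{C(1)}. The subset $X_k$ has to be redesigned to exploit the $D_n$ fork: in place of $\varepsilon_s + (-1)^{s-i_{k-2}}\varepsilon_{i_k+\lfloor(s-i_{k-2}+1)/2\rfloor}$ from the type $C$ construction, one uses roots of the form $\varepsilon_s \pm \varepsilon_{n-1}$ and $\varepsilon_s \pm \varepsilon_n$ paired according to the parity of $s$, so as to absorb the extra weight-$1$ simple roots $\alpha_{n-1}$ and $\alpha_n$. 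The extra constraint $s_l \leq 2$ for $l \equiv k \pmod 2$ from the description of $\Delta^o_1$ is precisely what is needed to partition $Q_{k-1} \cup Y_{d,k}$ into $4$-element blocks of the same three shapes (a), (b), (c) that appeared in type $C$, and a direct check then shows each such block has determinant $\pm 1$, giving $\det M_k = \pm 1$.

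For part (2), assume $\operatorname{char} \mathbf k = 2$ and that $d$ is not special. Suppose for contradiction that $\det \mathscr G_{d,\lambda} \neq 0$ for some $\lambda$, taken faithful by Lemma \ref{l_iso_faith}; then $\det M_k \neq 0$. For each $s \in [i_{k-1}+1, i_k]$, define row blocks $A_s = (B_s, C_s)$ by pairing $\varepsilon_s - \varepsilon_l$ with $\varepsilon_s + \varepsilon_l$ for $l \in [i_k+1, n]$, exactly as in the proof of Lemma \ref{C(2)}; the chain of inclusions $A_r \prec A_s$ for $r < s$ is forced by the monotonicity $s_l \leq s_{l+2}$. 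This casts $M_k$ in the shape of the matrix $S$ of Proposition \ref{p_det_D} with $\Gamma = 2n$. The failure of specialness of $d$ — either a maximal constant stretch of odd-index $s_l$ with $s_\eta - s_\zeta$ odd, or $s_1$ odd — translates into the hypothesis of case (1) or case (2) of that proposition, forcing $\det M_k = 0$ and producing the required contradiction.

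The main obstacle I expect is the combinatorial bookkeeping for $X_k$ in part (1): the fork roots $\varepsilon_{n-1} \pm \varepsilon_n$ have to be absorbed into the pairing scheme compatibly with the specialness parities, and one must verify that the proposed partition of $Q_{k-1} \cup Y_{d,k}$ into $4$-element blocks is simultaneously exhaustive, disjoint, and produces an elementary block of unit determinant in each case. Once this bookkeeping is correctly set up, the determinant evaluations collapse to the same $\pm 1$ computations as in Lemma \ref{C(1)}, and part (2) follows essentially immediately from Proposition \ref{p_det_D}.
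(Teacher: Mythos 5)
Your high-level skeleton (reduce to a faithful $\lambda$ via Lemma \ref{l_iso_faith}, factor $\det\mathscr G_{d,\lambda}$ through the blocks $M_l$, and settle everything on the terminal block via Proposition \ref{p_det_D}) is the same as the paper's, but the concrete execution misreads the weight structure at the $D_n$ fork for diagrams in $\Delta^o_1$, and this breaks both parts. For $d\in\Delta^o_1$ one has $d(\alpha_{n-1})=d(\alpha_n)=1$, hence $\xi_{n-1}=1$, $\xi_n=0$, and $\xi_j=1$ for all $j\in[i_k+1,n-1]$ while $\xi_s=2$ for $s\in[i_{k-1}+1,i_k]$. Consequently, for $s\in[i_{k-1}+1,i_k]$ and $l\in[i_k+1,n-1]$ the root $\varepsilon_s+\varepsilon_l$ lies in $\Phi_{d,3}$ and $\varepsilon_s\pm\varepsilon_n$ lies in $\Phi_{d,2}$: the pairs $(\varepsilon_s-\varepsilon_l,\ \varepsilon_s+\varepsilon_l)$ on which you build the blocks $A_s=(B_s,C_s)$ ``exactly as in Lemma \ref{C(2)}'' are not both in $\Phi_{d,1}$, so the matrix you want to cast into the shape of $S$ with $\Gamma=2n$ does not exist. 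The weight-one roots at the tail are instead $\varepsilon_j\pm\varepsilon_n$ for $j\in[i_k+1,n-1]$, which is why the paper takes the terminal block to be $M_k=(\lambda([e_\alpha,e_\beta]))_{\alpha,\beta\in Q_k\cup Y_{k+1}}$ with $Y_{k+1}=\{\varepsilon_j\pm\varepsilon_n\mid j\in[i_k+1,n-1]\}$ --- one level further out than your $Q_{k-1}\cup Y_{d,k}$ --- and why the product in (\ref{e_prod}) runs over $l\in[1,k-1]$ here. For the same reason your appeal to cases (1) or (2) of Proposition \ref{p_det_D} is off: the correct reduction lands in case (2) (some $m_\sigma$ odd), with $\Gamma=2$ blocks $E_{s,t}=\operatorname{antidiag}(1,1)$ coming from the single pair $\varepsilon_j\mp\varepsilon_n$.

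The missing idea is condition (4) in the description of $\Delta^o_1$, namely $s_l\le 2$ for $l\equiv k$, which forces $s_k\in\{1,2\}$ and makes $Q_k$ have at most two ``rows''. The paper's proof is a direct two-case analysis on $s_k$. When $s_k=1$ and $d$ is special, $s_1=s_3=\dots=s_{k+1}$ forces $Q_k=\emptyset$ and the non-degenerate $\lambda$ is immediate; when $s_k=2$ and $d$ is special, the explicit choice $\lambda(\varepsilon_a-\varepsilon_n)=\lambda(\varepsilon_b+\varepsilon_n)=1$ and $\lambda(\varepsilon_j+\varepsilon_{j+1})=1$ for $j\in[i_k+1,i_k+s_v]$ (with $a=i_{k-1}+1$, $b=i_k$) does the job. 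Your part (1) never actually specifies $X_k$, and the claimed partition of $Q_{k-1}\cup Y_{d,k}$ into the three four-element block shapes of Lemma \ref{C(1)} cannot be carried out for the reason above; you correctly flag this bookkeeping as the main obstacle, but it is not a bookkeeping issue --- the type-$C$ template is structurally inapplicable to $\Delta^o_1$ and must be replaced by the $s_k\in\{1,2\}$ case split.
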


\begin{proof}
 By the arguments as in Section \ref{s_faithful_map}, we can restrict our attention to faithful $\lambda \in \operatorname{Hom}_{\mathbb Z}( \mathfrak g_{\mathbb Z,d}(2),\mathbb Z)$ or
  $\operatorname{Hom}(\mathfrak g_{d}(2), \mathbf k)$.
 Without loss generality we can assume that $k$ is even. The discussion for $k$ odd is the same.

Let $Y_{k+1}=\{\varepsilon_j-\varepsilon_n, \varepsilon_j+\varepsilon_n \mid j\in [i_k+1, n-1]\}$.
We can also construct the sets $Q_{l+1}, P_l$ for $l\in [1,k-1]$ as before. We set
$$M_{l} = (\lambda([e_{\alpha},e_{\beta}]))_{\alpha \in Q_l,\beta \in P_l},~~\text{for $l\in [1,k-1]$}$$
and $M_k = (\lambda([e_{\alpha},e_{\beta}]))_{\alpha, \beta \in Q_{k} \cup Y_{k+1}}$. Then
$$\det \mathscr G_{d,\lambda} = \pm (\prod_{l=1}^{k-1} \det M_l)^2 \det M_k.$$
So it is enough to prove that:
\begin{enumerate}
\item If $d \in \Delta_{\text{spec}}$, then there exists  $\lambda \in \operatorname{Hom}_{\mathbb Z}( \mathfrak g_{\mathbb Z,d}(2),\mathbb Z)$  such that $ \det M_k=\pm 1$.
 \item If $d \not\in \Delta_{\text{spec}}$ and $\operatorname{char} \mathbf k =2$, then $\det M_k = 0$ for any $\lambda \in \operatorname{Hom}(\mathfrak g_{d}(2), \mathbf k)$.
\end{enumerate}

\noindent (a) If $s_k=1$ and $d \in \Delta_{\text{spec}}$, then $s_1=s_3=\dots=s_{k+1}$. Thus in this case $Q_k$ is empty. So it is very easy to construct the  $\lambda$ such that $ \det M_k=\pm 1$.
When  $d \not\in \Delta_{\text{spec}}$, the set $Q_k$ is
$\{\varepsilon_{i_k}-\varepsilon_j\mid j\in [i_k+s_u+1,n-1]\}$ for some even integer $u$.
For $s\in  [i_k+s_u+1,n-1]$, let $A_s$ be the submatrix of $M_k$ corresponding to $\{\varepsilon_{i_k}-\varepsilon_s, \varepsilon_s-\varepsilon_n, \varepsilon_s+\varepsilon_n  \} \subseteq Q_k \cup Y_{k+1}.$
For $t\in [i_k+1,i_k+s_u]$, let $A_t$ be the submatrix of $M_k$ corresponding to $\{ \varepsilon_t-\varepsilon_n, \varepsilon_t+\varepsilon_n  \} \subseteq Q_k \cup Y_{k+1}.$ For each $s,t \in [i_k+1, n-1]$, let $E_{s,t}=\text{antidiag}(1,1)$.
It is easy to see that the matrix $M_k$ has the form as given in Proposition 
\ref{p_det_D}(2), thus $\det M_k \equiv 0$.

\medskip

\noindent (b) If $s_k=2$, we set $a=i_{k-1}+1$ and $b=i_k=a+1$. When $d \in \Delta_{\text{spec}}$,
the set $Q_k$ is $\{\varepsilon_{a}-\varepsilon_j, \varepsilon_{b}-\varepsilon_j\mid j\in [i_k+s_v+1,n-1]\}$ for some integer $v$. Given $\lambda \in \operatorname{Hom}_{\mathbb Z}( \mathfrak g_{\mathbb Z,d}(2),\mathbb Z)$ such that  $$\lambda(\varepsilon_a-\varepsilon_n)=\lambda(\varepsilon_b+\varepsilon_n)=1
~\text{and}~ \lambda(\varepsilon_j+\varepsilon_{j+1})=1 ~\text{for}~ j\in [i_k+1, i_k+s_v].$$
Then it is not difficult to check that $ \det M_k=\pm 1$ for this $\lambda$.

When $d \notin \Delta_{\text{spec}}$, the set $Q_k$ is $$\{\varepsilon_{a}-\varepsilon_j, \varepsilon_{b}-\varepsilon_l\mid j\in [i_k+s_c+1,n-1], l\in [i_k+s_d+1,n-1]\}$$ for some integers $c$ and $d$ which satisfy $2\leq s_c< s_d$. When $s\in [i_k+s_c+1, i_k+s_d]$, we consider the submatrix
$A_s$ of $M_k$ corresponding to
$$\{\varepsilon_{a}-\varepsilon_s,\  \varepsilon_s-\varepsilon_n, \ \varepsilon_s+\varepsilon_n  \} \subseteq Q_k \cup Y_{k+1}.$$
When $t\in [i_k+s_d+1, n-1]$, we consider the submatrix
$A_t$ of $M_k$ corresponding to
$$\{\varepsilon_{a}-\varepsilon_t, \ \varepsilon_{b}-\varepsilon_t,\  \varepsilon_t-\varepsilon_n, \ \varepsilon_t+\varepsilon_n  \} \subseteq Q_k \cup Y_{k+1}.$$
For each $s,t \in [i_k+s_c+1, n-1]$, set $E_{s,t}=\text{antidiag}(1,1)$.
It is easy to check that the matrix $M_k$ satisfies the conditions in Proposition 
\ref{p_det_D}(2), thus $\det M_k =0$ when $\text{char}~\mathbf k =2 $.
\end{proof}

\section{The second Case  of type $D$}

\setcounter{theorem}{1}
\subsection{Odd weights of the second case}
The set $\Delta^o_2$ can be identified with the set of sequences $(i_1, i_2,\cdots, i_k)$ of integers, satisfying the following conditions (1)-(4):
\begin{enumerate}
\item $1 \leq i_1<i_2< \cdots <i_k \leq n-2$,
\item $s_l \leq s_{l+2}$ for $l \in [1,k-2]$,
\item $s_l \equiv 0$ for $l \in [1,k]$ with $l \equiv k$,
\item $s_{k-1} \leq 2(n-i_k)$.
\end{enumerate}
where $s_1 =i_1$ and $s_l = i_l-i_{l-1}$ for $l \in [2,k]$,
through identifying $d \in \Delta^o$ with $(i_1,i_2,\cdots,i_k)$ where $\{i_l| l \in [1,k]\} = \{ i \in [1,n]| d(\alpha_i) =1\}$.

\medskip

Let $d = (i_1,i_2,\cdots,i_k) \in \Delta^o_2$.
As before, we set $s_1 =i_1$ and $s_l = i_l-i_{l-1}$ for $l \in [2,k]$. By the condition $(\spadesuit_D)$ in Section \ref{WDD}, it is not difficult to see that  $d$ is special if and only if it satisfies  following conditions:

\noindent (a) $s_1$ is even;

\noindent (b) For any two integers $u\leq v\in [1,k]$ and $u\equiv v \equiv k$, if $$s_{u-2}<s_u=s_{u+2}=\dots=s_{v-2}=s_v<s_{v+2},$$ then $s_{v+1}-s_{u-1}$ is even.

\begin{lemma}
Let $d = (r) \in \Delta^o$, then $r$ is even and $d \in \Delta_{\text{spec}}$. There exists a non-degenerate $\lambda \in \operatorname{Hom}_{\mathbb Z}( \mathfrak g_{\mathbb Z,d}(2),\mathbb Z)$.

\end{lemma}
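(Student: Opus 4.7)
The plan is to handle this short lemma directly: first verify the two assertions about $r$ and $d$ from the combinatorial descriptions, then exhibit an explicit $\lambda$ whose Gram matrix is manifestly unimodular.

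For the combinatorial claims, under the identification of $\Delta^o_2$ given at the start of the section, the diagram $d = (r)$ corresponds to $k = 1$ and $i_1 = r$. Condition (3) of that identification says $s_l \equiv 0$ for all $l \in [1,k]$ with $l \equiv k$; specialising to $l = k = 1$ this forces $s_1 = r$ to be even. The specialness conditions (a) and (b) stated just before the lemma then reduce to "$s_1$ is even'' (already verified) and to a vacuous statement (there are no two indices $u \leq v$ in $[1,1]$ satisfying the strict inequalities $s_{u-2} < s_u$ and $s_v < s_{v+2}$), so $d \in \Delta_{\text{spec}}$.

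For the construction, I would first unpack the weight spaces. In type $D_n$, using $d(\alpha_r) = 1$ and all other $d(\alpha_i) = 0$, the multiplicity formula gives
$$\Phi_{d,1} = \{ \varepsilon_i \pm \varepsilon_j \mid i \in [1,r],\ j \in [r+1,n] \}, \qquad \Phi_{d,2} = \{ \varepsilon_i + \varepsilon_{i'} \mid 1 \leq i < i' \leq r \}.$$
Since no root of the form $\varepsilon_i$ or $2\varepsilon_i$ occurs in type $D$, the only nonvanishing brackets among $\Phi_{d,1}$-root vectors are
$[e_{\varepsilon_i - \varepsilon_j}, e_{\varepsilon_{i'} + \varepsilon_j}] = \pm e_{\varepsilon_i + \varepsilon_{i'}}$ for $i \neq i'$. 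Consequently, ordering $\Phi_{d,1}$ first by the sign and then by $j$, the Gram matrix takes the block form
$$\mathscr G_{d,\lambda} = \begin{pmatrix} 0 & A \\ -{}^t\!A & 0 \end{pmatrix}, \qquad A = \operatorname{Diag}(A_{r+1}, \ldots, A_n),$$
where $A_j = (\lambda([e_{\varepsilon_i - \varepsilon_j}, e_{\varepsilon_{i'} + \varepsilon_j}]))_{i, i' \in [1,r]}$ has zero diagonal (there being no $2\varepsilon_i$ root).

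Finally I would define $\lambda \in \operatorname{Hom}_{\mathbb Z}(\mathfrak g_{\mathbb Z,d}(2), \mathbb Z)$ by $\lambda(e_{\varepsilon_{2m-1} + \varepsilon_{2m}}) = 1$ for $m \in [1, r/2]$ (well-defined because $r$ is even) and $\lambda(e_\alpha) = 0$ on all remaining $\alpha \in \Phi_{d,2}$. With this choice each $A_j$ becomes a block-diagonal matrix consisting of $r/2$ blocks of shape $\bigl(\begin{smallmatrix} 0 & \pm 1 \\ \pm 1 & 0 \end{smallmatrix}\bigr)$, so $\det A_j \in \{\pm 1\}$ and therefore $\det \mathscr G_{d,\lambda} = \pm(\det A)^2 \in \{\pm 1\}$. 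There is no genuine obstacle in the argument; the key contrast with types $B$ and $C$ is that in type $D_n$ no weight-$2$ root has the form $2\varepsilon_i$, so the parity issue on the diagonal of $A_j$ that drove the degeneracy arguments in Lemmas for types $B$, $C$ never appears, and the simple pairing $\varepsilon_{2m-1} + \varepsilon_{2m}$ suffices.
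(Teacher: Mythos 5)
Your proposal is correct and follows essentially the same route as the paper: the same block decomposition $\mathscr G_{d,\lambda} = \bigl(\begin{smallmatrix} 0 & A \\ -{}^t\!A & 0\end{smallmatrix}\bigr)$ with $A = \operatorname{Diag}(A_{r+1},\dots,A_n)$, and the same choice $\lambda(e_{\varepsilon_i+\varepsilon_{i+1}})=1$ for odd $i$, zero elsewhere. You merely supply more of the routine verifications (the parity of $r$ from condition (3), the explicit description of $\Phi_{d,1}$ and $\Phi_{d,2}$, and the vanishing of the diagonal of each $A_j$) that the paper leaves as "easy to check."
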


\begin{proof}
For $\lambda \in \operatorname{Hom}_{\mathbb Z}(\mathfrak g_{\mathbb Z,d}(2), \mathbb Z)$, we denote
$$A_{\lambda,l}=(\lambda([e_{\varepsilon_i-\varepsilon_l},e_{\varepsilon_j+\varepsilon_l}])_{(i,j)\in [1,r]\times[1,r]}$$
for ${l\in [r+1,n]}$.
Then
\begin{eqnarray*}
\mathscr G_{d,\lambda} =
\begin{pmatrix}
& A \\
-{^tA} &
\end{pmatrix},
\end{eqnarray*}
where $A = \operatorname{Diag}(A_{\lambda,r+1}, \cdots, A_{\lambda,n})$.

Now we set $\lambda(e_{\varepsilon_i+\varepsilon_{i+1}})=1$ for $i\in [1,r]$, $i\equiv 1$ and $\lambda(e_\alpha)=0$ for other roots.  Since $r$ is even, it is easy to check that
$\det A_{\lambda,l} \in \{ \pm 1\}$ for $l\in [r+1,n]$
which implies $\det \mathscr G_{d,\lambda} \in \{ \pm 1\}$ for this $\lambda$.

\end{proof}

\begin{lemma}\label{Dcase2(1)}
Let $d = (i_1,i_2, \cdots, i_k) \in \Delta^o \cap \Delta_{\text{spec}}$. Then there exists a non-degenerate $\lambda \in \operatorname{Hom}_{\mathbb Z}( \mathfrak g_{\mathbb Z,d}(2),\mathbb Z)$.
\end{lemma}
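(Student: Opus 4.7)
The plan is to mirror closely the proof of Lemma \ref{C(1)} for type $C$, since the combinatorial setup for $\Delta^o_2$ in type $D$ is essentially identical to the $C$-case (same sequence conditions, same characterization of specialness in terms of the $s_l$). The first step is to invoke Lemma \ref{l_iso_faith} to restrict attention to faithful $\lambda \in \operatorname{Hom}_{\mathbb Z}(\mathfrak g_{\mathbb Z,d}(2),\mathbb Z)$. Having done so, the factorization
$$\det \mathscr G_{d,\lambda} = \pm \Bigl(\prod_{l=2}^{k-1} \det M_l\Bigr)^{2} \det M_k$$
from \eqref{e_prod} reduces the problem to constructing $\lambda$ so that each $\det M_l$ (for $l \in [2,k-1]$) and $\det M_k$ lies in $\{\pm 1\}$. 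As always, I would handle only the case $k$ even in detail and leave $k$ odd to an analogous argument.

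For the construction of $\lambda$, I would take, for $l \in [1,k-2]$,
$$X_l = \{\varepsilon_s - \varepsilon_t \in \Phi_{d,2} \mid s \leq i_l < i_{l+1} < t \text{ and } t-s = i_{l+1}-i_{l-1}\},$$
together with
$$X_{k-1} = \{\varepsilon_j + \varepsilon_{j+1} \mid j \in [i_{k-1}+1, i_k-1],\ j \equiv i_{k-1}+1\},$$
$$X_k = \bigl\{\varepsilon_s + (-1)^{s-i_{k-2}} \varepsilon_{i_k + \lfloor (s-i_{k-2}+1)/2 \rfloor} \mid s \in [i_{k-2}+1, i_{k-1}]\bigr\},$$
and define $\lambda$ to be $1$ on $\mathbb X = \bigcup_{l=1}^{k} X_l$ and $0$ on $\Phi_{d,2}\setminus\mathbb X$. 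The bound $s_{k-1} \leq 2(n-i_k)$ from the definition of $\Delta^o_2$ guarantees that $i_k + \lfloor (s-i_{k-2}+1)/2\rfloor \leq n$, so every root appearing in $X_k$ is a legitimate element of $\Phi_{d,2}$ (type $D$ has no long roots $2\varepsilon_i$, so this index bound is crucial). By inspection, $\lambda$ is faithful in the sense of Section \ref{s_faithful_map}, and each block $M_l$ for $l \in [2,k-1]$ is (up to permutation) an anti-diagonal $\pm 1$ matrix, so $\det M_l \in \{\pm 1\}$.

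The substantive step is showing $\det M_k \in \{\pm 1\}$. As in the type $C$ proof, I would partition $Q_{k-1}\cup Y_{d,k}$ by grouping each $\varepsilon_s$ (with $s\in [i_{k-2}+1, i_{k-1}]$ selected by the specialness condition) with the three other roots obtained by toggling the sign in front of $\varepsilon_{i_k + \lfloor (s-i_{k-2}+1)/2\rfloor}$ and shifting between $\varepsilon_j$, $\varepsilon_{j+1}$ for appropriate $j \in [i_{k-1}+1, i_k-1]$; leftover roots pair up into $4$-element sets of the form $\{\varepsilon_j \pm \varepsilon_l, \varepsilon_{j+1} \pm \varepsilon_l\}$. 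Each such block yields a $4\times 4$ matrix of the anti-tridiagonal shape
$$\begin{pmatrix} 0 & \ast & 0 & 0 \\ -\ast & 0 & \ast & 0 \\ 0 & -\ast & 0 & \ast \\ 0 & 0 & -\ast & 0 \end{pmatrix}$$
with nonzero $\ast$-entries, hence determinant $\pm 1$. The parity case $s_k$ odd versus $s_k$ even needs to be split exactly as in (a) and (b) of Lemma \ref{C(1)}: when $s_{k-1}=2(n-i_k)$ one has an odd middle interval, forcing one additional residual $4$-block centered at $\varepsilon_{s}-\varepsilon_{i_k}$.

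The main obstacle will be organizing the block partition of $Q_{k-1}\cup Y_{d,k}$ so that every root is used exactly once, and checking that the parity conditions (a)–(b) characterizing specialness for $\Delta^o_2$ are precisely what is needed to make the lengths of the consecutive constancy runs $\Theta_j$ compatible with the $4$-cycle blocking. This is where specialness enters in an essential way — without it, one of the $4\times 4$ blocks would collapse to a degenerate $2\times 2$ or $3\times 3$ matrix with vanishing determinant in characteristic $2$, a phenomenon that will be exploited in the companion non-special lemma.
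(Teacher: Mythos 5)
Your choice of $\lambda$ is in substance the paper's: after reducing to faithful maps via Lemma \ref{l_iso_faith} and the factorization \eqref{e_prod}, the paper also takes $\lambda$ to be the indicator of $\bigcup_{l=1}^{k}X_l$ with $X_l$, $X_{k-1}$ as you define them, and its $X_k$ (written there as $\{\varepsilon_{j_s}+(-1)^s\varepsilon_{i_k+\lfloor (s+1)/2\rfloor}\}$ with $j_s$ running over a union of intervals $\Omega_{u,v}$ attached to the constancy runs of the even-indexed $s_l$) coincides with your $X_k$ after reindexing $s\mapsto s-i_{k-2}$. The $4\times 4$ anti-tridiagonal blocking of $M_k$ is also the same. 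So the construction itself is correct and identical to the paper's.

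However, your verification plan rests on a false premise, and one of its steps would fail as written. The combinatorics of $\Delta^o_2$ is \emph{not} the same as in type $C$: condition (3) and the specialness criterion are shifted by one in parity (type $C$ constrains $s_l$ for $l\equiv k-1$ and imposes evenness of $s_{v+1}-s_{u-1}$ along runs with $u\equiv v\equiv k-1$, whereas $\Delta^o_2$ constrains $l\equiv k$ and runs with $u\equiv v\equiv k$). Consequently, in $\Delta^o_2$ the quantity $s_k$ is \emph{always} even, so the dichotomy you import from Lemma \ref{C(1)} ("$s_k$ odd versus even", with a residual block at $\varepsilon_s-\varepsilon_{i_k}$ when $s_{k-1}=2(n-i_k)$) is vacuous, and the column sets $\Theta_s$ automatically have even cardinality without any appeal to specialness. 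Where specialness is genuinely needed here is the transposed statement: the rows $s,s+1$ that your $X_k$ ties to a common index $\varepsilon_{i_k+\lfloor (s-i_{k-2}+1)/2\rfloor}$ must lie in the same step of the staircase (i.e.\ $\Theta_s=\Theta_{s+1}$), which forces every $s_{2j-1}$ to be even --- exactly conditions (a) and (b) for $\Delta^o_2$; this is what the paper's run decomposition $\Omega=\bigcup\Omega_{u,v}$ with $|\Omega_{u,v}|=s_{v+1}-s_{u-1}$ even is encoding. If you check only the type-$C$ parities you will find them trivially satisfied and miss the point at which the tiling of $Q_{k-1}\cup Y_{d,k}$ would break for a non-special $d$. (A minor further slip: the degeneracy you describe in the last paragraph is a characteristic-$2$ phenomenon, but part (1) is a statement over $\mathbb Z$, where the failure manifests as $\det\mathscr G_{d,\lambda}\neq\pm1$ rather than $=0$.)
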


\begin{proof}
For $k\geq 2$, as the same discussion in Section \ref{ssfai}, we can assume that $\lambda$ is faithful and, construct the sets $Q_l, P_l$ for $l\in [1,k-2]$. We set
$$M_{l+1} = (\lambda([e_{\alpha},e_{\beta}]))_{\alpha \in Q_l,\beta \in P_l},~~\text{for $l\in [1,k-2]$}$$
and $M_k = (\lambda([e_{\alpha},e_{\beta}]))_{\alpha, \beta \in Q_{k-1} \cup Y_{d,k}}$. Then
$$\det \mathscr G_{d,\lambda} = \pm (\prod_{l=2}^{k-1} \det M_l)^2 \det M_k.$$
Without loss of generality, we assume that $k$ is even, the discussion for $k$ odd is the same.

To compute $ \det M_k$, we need to consider the set $Q_{k-1}$ which has the following form
\begin{center}
\resizebox{\textwidth}{20mm}{
\begin{tabular}{r cccc cccc ccccc cccc ccccc}
    & $i_{k-1}$ & & & &
    $i_{k-1}+s_2$  &  & & &
    $i_{k-1}+s_4$ &&  $\cdots$ &&&
    $i_{k-1}+s_{k-4}$ & &&&
    $i_{k-1}+s_{k-2}$ &&&& $i_{k}-1$ \\
    \cline{2-23}
$i_{k-2}+1$ & \multicolumn{4}{|c;{4pt/4pt}}{\multirow{3}{*}{   }} &  \multicolumn{4}{c;{4pt/4pt}}{\multirow{3}{*}{  }} &
\multicolumn{5}{c;{4pt/4pt}}{\multirow{8}{*}{  }} &
\multicolumn{4}{c;{4pt/4pt}}{\multirow{9}{*}{ $Q_{k-1}$   }} &
\multicolumn{5}{c|}{\multirow{12}{*}{  }}
\\
& \multicolumn{4}{|c;{4pt/4pt}}{\multirow{3}{*}{ }} &
\multicolumn{4}{c;{4pt/4pt}}{\multirow{3}{*}{  }} &
\multicolumn{5}{c;{4pt/4pt}}{\multirow{8}{*}{  }} &
\multicolumn{4}{c;{4pt/4pt}}{\multirow{9}{*}{  }} &
\multicolumn{5}{c|}{\multirow{12}{*}{  }}
\\
& \multicolumn{4}{|c;{4pt/4pt}}{\multirow{3}{*}{ }} &
\multicolumn{4}{c;{4pt/4pt}}{\multirow{3}{*}{  }} &
\multicolumn{5}{c;{4pt/4pt}}{\multirow{8}{*}{  }} &
\multicolumn{4}{c;{4pt/4pt}}{\multirow{9}{*}{  }} &
\multicolumn{5}{c|}{\multirow{12}{*}{  }}
\\

\cline{2-5}
$i_{k-2}+s_{1}+1$ & \multicolumn{4}{|c|}{\multirow{3}{*}{ }} &
\multicolumn{4}{c;{4pt/4pt}}{\multirow{3}{*}{ }} &
\multicolumn{5}{c;{4pt/4pt}}{\multirow{8}{*}{ }} &
\multicolumn{4}{c;{4pt/4pt}}{\multirow{9}{*}{ }} &
\multicolumn{5}{c|}{\multirow{12}{*}{ }}
\\
& \multicolumn{4}{|c|}{\multirow{3}{*}{ }} &
\multicolumn{4}{c;{4pt/4pt}}{\multirow{3}{*}{ }} &
\multicolumn{5}{c;{4pt/4pt}}{\multirow{8}{*}{ }} &
\multicolumn{4}{c;{4pt/4pt}}{\multirow{9}{*}{ }} &
\multicolumn{5}{c|}{\multirow{12}{*}{ }}
\\
& \multicolumn{4}{|c|}{\multirow{3}{*}{ }} &
\multicolumn{4}{c;{4pt/4pt}}{\multirow{3}{*}{ }} &
\multicolumn{5}{c;{4pt/4pt}}{\multirow{8}{*}{ }} &
\multicolumn{4}{c;{4pt/4pt}}{\multirow{9}{*}{ }} &
\multicolumn{5}{c|}{\multirow{12}{*}{ }}
\\

\cdashline{2-5} \cline{6-9}
$i_{k-2}+s_3+1$ & \multicolumn{8}{|c;{1pt/1pt}}{\multirow{3}{*}{$P_{k-2}$ }} &
\multicolumn{5}{c;{4pt/4pt}}{\multirow{8}{*}{ }} &
\multicolumn{4}{c;{4pt/4pt}}{\multirow{9}{*}{ }} &
\multicolumn{5}{c|}{\multirow{12}{*}{ }}
\\
\cline{10-11}
$\cdots$  & \multicolumn{10}{|c;{1pt/1pt}}{\multirow{3}{*}{ }} &
\multicolumn{3}{c;{4pt/4pt}}{\multirow{8}{*}{ }} &
\multicolumn{4}{c;{4pt/4pt}}{\multirow{9}{*}{ }} &
\multicolumn{5}{c|}{\multirow{12}{*}{ }}
\\
\cline{12-14}
 & \multicolumn{13}{|c|}{\multirow{3}{*}{ }} &
\multicolumn{4}{c;{4pt/4pt}}{\multirow{9}{*}{ }} &
\multicolumn{5}{c|}{\multirow{12}{*}{ }}
\\
\cdashline{2-14} \cline{15-18}
$i_{k-2}+s_{k-3}+1$ & \multicolumn{17}{|c|}{\multirow{3}{*}{ }} &
\multicolumn{5}{c|}{\multirow{12}{*}{ }}
\\
& \multicolumn{17}{|c|}{\multirow{3}{*}{ }} &
\multicolumn{5}{c|}{\multirow{12}{*}{ }}
\\
$i_{k-1}$ & \multicolumn{17}{|c|}{\multirow{3}{*}{ }} &
\multicolumn{5}{c|}{\multirow{12}{*}{ }}
\\
\cline{2-23}
\end{tabular}
}
\end{center}

\smallskip

For $l\in [1,k-2]$, put
$$X_{l}=\{\varepsilon_s-\varepsilon_t\in \Phi_{d,2}\mid s\leq i_{l}<i_{l+1}<t~\text{and}~ t-s=i_{l+1}-i_{l-1}\},$$
and then put
$$X_{k-1}=\{\varepsilon_j+\varepsilon_{j+1} \in \Phi_{d,2} \mid j\in [i_{k-1}+1,i_{k}-1]~\text{and}~ j\equiv i_{k-1}+1\}.$$

Since $d\in \Delta^o$ is special, if there exist two even integers $u\leq v$ in $[2,k-2]$ satisfying
\begin{eqnarray} \label{e_con22}
s_{u-2}<s_u=s_{u+2}=\dots=s_{v-2}=s_v<s_{v+2},
\end{eqnarray}
 then $s_{v+1}-s_{u-1}$ is even.
For a pair of even integers $u \leq v$ in $[2,k-2]$ satisfying (\ref{e_con22}), we denote
$$\Omega_{u,v}=[i_{k-2}+s_{u-1}+1,\ i_{k-2}+s_{v+1} ].$$
Then we set
$$\Omega=\bigcup_{u \leq v\text{ in }[2,k-2] \text{ with } (\ref{e_con22})} \Omega_{u,v}.$$
The cardinality $\omega=|\Omega|$ is even. We write $\Omega$ as
$$\Omega=\{j_1,j_2,\dots, j_\omega\},$$
and set
$$X_{k}= \{\varepsilon_{j_s}+(-1)^s \varepsilon_{i_k+\lfloor\frac{s+1}{2}\rfloor}\mid s\in [1,\omega]\}.$$
With these settings, we denote $\mathbb{X}= \cup_{l = 1}^k X_{l}$ which is a subset of $\Phi_{d,2}$.

Let $\lambda: \mathfrak g_{\mathbb Z,d}(2) \rightarrow \mathbb Z$ be the homomorphism such that
$$\lambda(e_{\alpha}) =
\begin{cases}
1 & \text{ if } \alpha \in \mathbb{X}, \\
0 & \text{ if } \alpha \in \Phi_{d,2} \backslash \mathbb{X}.
\end{cases}$$
In the following we show that $\det \mathscr G_{d,\lambda} \in \{ \pm 1\}$.

Obviously, $\lambda$ is faithful in the sense of \ref{ssfai}, then $$\det \mathscr G_{d,\lambda} = \pm (\prod_{l=2}^{k-1} \det M_l)^2 \det M_k.$$
It is easy to check that
$\det M_l \in \{ \pm 1\}$ for $l \in [2,k-1]$.

To compute $\det M_k$, we put
$$x_r=\lambda(\varepsilon_r+\varepsilon_{r+1})~\text{and}~y_s= \lambda(\varepsilon_{j_s}+(-1)^s \varepsilon_{i_k+\lfloor\frac{s+1}{2}\rfloor})$$
for $r\in [i_{k-1}+1,i_{k}-1]$ with $r\equiv i_{k-1}+1$, and $s\in [1,\omega]$.

Note that
$$ M_k = (\lambda([e_{\alpha},e_{\beta}]))_{\alpha, \beta \in Q_{k-1} \cup Y_{d,k}}.$$
So we divide $Q_{k-1} \cup Y_{d,k}$ into subsets and consider the submatrices of $\mathscr G_{d,\lambda}$ corresponding to these subsets.

\medskip

\noindent (a) One type of subsets are of the form
$$\{\varepsilon_{j_s}-\varepsilon_l,\  \varepsilon_{l}+(-1)^s \varepsilon_{i_k+\lfloor\frac{s+1}{2}\rfloor},
\ \varepsilon_{l+1}+(-1)^{s+1} \varepsilon_{i_k+\lfloor\frac{s}{2}\rfloor+1},
\ \varepsilon_{j_{s+1}}-\varepsilon_{l+1}\},$$
or
$$\{\varepsilon_{j_s}-\varepsilon_{l+1},\  \varepsilon_{l+1}+(-1)^s \varepsilon_{i_k+\lfloor\frac{s+1}{2}\rfloor},
\ \varepsilon_{l}+(-1)^{s+1} \varepsilon_{i_k+\lfloor\frac{s}{2}\rfloor+1},
\ \varepsilon_{j_{s+1}}-\varepsilon_{l}\},$$
where $s\in [1,\omega]$ and $l\in [i_{k-1}+1,i_{k}-1]$ satisfy $s\equiv 1$, $l\equiv i_{k-1}+1$ and $\varepsilon_{j_\sigma}-\varepsilon_\tau \in Q_{k-1}$ for $\sigma\in [1,\omega]$, $\tau\in [i_{k-1}+1,i_{k}-1]$.

\medskip
\noindent (b) The other type of subsets are of the form
$$\{\varepsilon_j-\varepsilon_l,\ \varepsilon_{j+1}+\varepsilon_l,\  \varepsilon_j+\varepsilon_l, \ \varepsilon_{j+1}-\varepsilon_l \},$$
and these subsets cover all the remaining roots in $Q_{k-1} \cup Y_{d,k}$.

Let $X$ be a subset of $Q_{k-1} \cup Y_{d,k}$ of type (a) or (b). Then
$$(\lambda([e_{\alpha},e_{\beta}]))_{\alpha,\beta \in X} =
\begin{cases}
\begin{pmatrix}
&0   & y_s   & 0  & 0  \\
&-y_s  & 0   & x_l   & 0   \\
&0   & -x_l    & 0  & y_{s+1}  \\
&0   & 0    & -y_{s+1}  & 0
\end{pmatrix} & \text{ if } X \text{ is of type (a)}, \\
\begin{pmatrix}
&0   &x_j  &0   &0 \\
&-x_j  &0  &0   &0 \\
&0   &0   &0   &x_j \\
 &0   &0 &-x_j  &0
\end{pmatrix} & \text{ if } X \text{ is of type (b)}.
\end{cases}$$

It is easy to see that $M_k$ can be written as a blocked diagonal matrix with diagonal blocks of the form $(\lambda([e_{\alpha},e_{\beta}]))_{\alpha, \beta \in X}$ for each subset $X$ of type (a) or (b), and each block is of determinant $\pm 1$. Thus $\det M_k \in \{ \pm 1\}$, which implies $\det \mathscr G_{d,\lambda} \in \{ \pm 1\}$.

\end{proof}

\begin{lemma} \label{Dcase2(2)}
Let $d = (i_1,i_2, \cdots, i_k) \in \Delta^o \backslash \Delta_{\text{spec}}$.
If $\operatorname{char}\mathbf k =2$, then $\det \mathscr G_{d,\lambda} =0$ for any $\lambda \in \operatorname{Hom}(\mathfrak g_{d}(2), \mathbf k)$.
\end{lemma}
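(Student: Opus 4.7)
The plan is to mirror the strategy of Lemmas \ref{C(2)} and \ref{B(2)}, adapted to the root system of type $D_n$ in the second case. I would argue by contradiction. Suppose there exists $\lambda \in \operatorname{Hom}_{\mathbf k}(\mathfrak g_{d}(2), \mathbf k)$ with $\det \mathscr G_{d,\lambda} \neq 0$. By (\ref{e_iso_eq}) together with Lemma \ref{l_iso_faith}(1), we may compose $\lambda$ with an isomorphism of weighted Lie algebras and assume from the outset that $\lambda$ is faithful. The product decomposition (\ref{e_prod}) then forces $\det M_k \neq 0$, where $M_k$ is the matrix defined in (\ref{e_MK}).

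Next I would analyse $M_k$ combinatorially, taking $k$ even without loss of generality (the case $k$ odd is handled by the obvious parity shift). For each $l \in [i_{k-1}+1, i_k]$ there is a unique pair of even integers $u \leq v$ in $[2, k-2]$ with
\[
s_{u-2} < s_u = s_{u+2} = \dots = s_{v-2} = s_v < s_{v+2}
\]
and $i_{k-1}+s_u < l \leq i_{k-1}+s_v$; I would set $\Xi_l = [i_{k-2}+1, i_{k-2}+s_{v+1}]$, which records exactly which roots $\varepsilon_r - \varepsilon_l \in Q_{k-1}$ can pair nontrivially with columns indexed by $l$. The hypothesis that $d$ is not special means either $s_1$ is odd (failure of condition (a)) or condition (b) fails for some pair $(\zeta,\eta) \in [1,k]$ with $\zeta \equiv \eta \equiv k$; in either case, this produces some $l$ for which $|\Xi_l|$ is odd.

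For each $s \in [i_{k-1}+1, i_k]$, I would then form the submatrix $A_s$ of $M_k$ whose rows are indexed by $\{\varepsilon_r - \varepsilon_s : r \in \Xi_s\}$ and whose columns are indexed by the paired roots $\{\varepsilon_s - \varepsilon_l,\ \varepsilon_s + \varepsilon_l : l \in [i_k+1, n]\}$, and for each pair $s,t \in [i_{k-1}+1, i_k]$ let $E_{s,t}$ denote the submatrix of $M_k$ cut out by the roots $\{\varepsilon_s \pm \varepsilon_l,\ \varepsilon_t \pm \varepsilon_l : l \in [i_k+1, n]\}$. A careful reordering of $Q_{k-1} \cup Y_{d,k}$ then places $M_k$ into precisely the blocked shape of the matrix $S$ in Proposition \ref{p_det_D} with $\Gamma = 2(n-i_k)$; the $E$-blocks acquire their antidiagonal form because $[e_{\varepsilon_s - \varepsilon_l}, e_{\varepsilon_t + \varepsilon_l}] = \pm e_{\varepsilon_s + \varepsilon_t}$ (and its signed variants) in type $D$, with no additional contribution from short roots since none exist here.

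Finally, the existence of some $l$ with $|\Xi_l|$ odd translates directly into the hypothesis of Proposition \ref{p_det_D}(2), namely that some $m_\sigma$ is odd while $\Gamma = 2(n-i_k)$; applying that result gives $\det M_k = 0$, contradicting our standing assumption. The main obstacle I expect is the bookkeeping: verifying that the $\Xi_l$'s arrange into a strictly increasing nested sequence $m_1 < m_2 < \dots$ satisfying the chain condition $A_r \prec A_s$ demanded by Proposition \ref{p_det_D}, and checking that both failure modes of specialness ((a) failing via $s_1$ odd, or (b) failing via some $(\zeta,\eta)$) feed into the same odd-$m_\sigma$ conclusion, so that a single application of Proposition \ref{p_det_D}(2) covers both cases uniformly.
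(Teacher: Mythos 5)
Your proposal follows the paper's own proof essentially step for step: reduce to a faithful $\lambda$ via (\ref{e_iso_eq}) and Lemma \ref{l_iso_faith}, pass to $\det M_k$ via (\ref{e_prod}), define the intervals $\Xi_l$ and the blocks $A_s$, $E_{s,t}$ exactly as the paper does, extract an odd $|\Xi_l|$ from the failure of specialness, and conclude with Proposition \ref{p_det_D}(2). The approach and all key steps coincide with the paper's argument, so the proposal is correct as a blueprint (modulo the same bookkeeping the paper also leaves implicit).
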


\begin{proof}
Suppose that there exists $\lambda \in \operatorname{Hom}(\mathfrak g_{d}(2), \mathbf k)$  such that $\det \mathscr G_{d,\lambda} \ne 0$. We can assume that $\lambda$ is faithful by 
(\ref{e_iso_eq}) and Lemma \ref{l_iso_faith}.
So by (\ref{e_prod}), $\det \mathscr G_{d,\lambda} \ne 0$ implies that $\det M_k \ne 0$, where $M_k$ is defined as in (\ref{e_MK}). In the following we assume that $k$ is even without loss of generality.

For each $l\in [i_{k-1}+1,i_k]$, there exist two even integers $u < v$ in $[0,k-2]$ satisfying
\begin{eqnarray}
s_{u-2}<s_u=s_{u+2}=\dots=s_{v-2}=s_v<s_{v+2}, \nonumber
\end{eqnarray}
and $i_{k-1}+s_u< l\leq i_{k-1}+s_v$. We denote $\Xi_l=[i_{k-2}+1, i_{k-2}+s_{v+1}]$.
Since $d$ is not special, there exists an integer $l\in [i_{k-1}+1,i_k]$ such that $|\Xi_l|$ is odd.

For each fixed $s\in [i_{k-1}+1,i_k]$, we set $$A_{s}=\lambda([e_{\varepsilon_r-\varepsilon_s},  e_\varpi ])$$
where $r\in \Xi_s $ and $\varpi$ runs over the set
$\{\varepsilon_s-\varepsilon_l,  \ \varepsilon_s+\varepsilon_l \mid   l\in [i_k+1,n]\}.$
For any $s,t\in [i_{k-1}+1,i_k]$, denote $E_{s,t}$ the block associate the set of roots
$$\{\varepsilon_s-\varepsilon_l,\  \varepsilon_s+\varepsilon_l, \ \varepsilon_t-\varepsilon_l, \ \varepsilon_t+\varepsilon_l\mid l\in [i_k+1,n] \}.$$

With a good ordering in $Q_k\cup Y_{d,k}$, the matrix $M_k$ has the same form as the matrix $S$ in Proposition 
\ref{p_det_D}(2). Since $d$ is not special, there exists $s\in [i_{k-1}+1,i_k]$ such that $|\Xi_s|$ is odd.
 Then all conditions in Proposition 
 \ref{p_det_D}(2) are satisfied and  we have $\det M_k =0$ when $\text{char}~\mathbf k =2 $. We get a contradiction which proves the lemma.

\end{proof}

Combining Lemma \ref{Dcase1}, Lemma \ref{Dcase2(1)} and Lemma \ref{Dcase2(2)}, we have proved Theorem \ref{mainThm2} in the type $D$ case. Therefore with  the results we get in Sections 3 to 8, we have proved
Theorem \ref{mainThm2}.

\section{Final remarks}

Theorem \ref{mainThm1}, combined with \cite[Corollary 5.11]{G}, gives a new characterization of special unipotent classes.
According to the discussion in Section 0 and Theorem \ref{mainThm1}, the GGGRs can be defined for certain unipotent orbits coming from characteristic zero.

In the good characteristic case, Kawanaka's definition of GGGRs starts with a unipotent element. This element will determine a weighted Dynkin diagram and subsequently a grading on the Lie algebra. This grading is crucial for the definition. Maybe that's why Geck's definition starts with a weighted Dynkin diagram, from which the required grading can be obtained. But in the bad characteristic case, not the set of unipotent orbits, but the set of unipotent orbits coming from characteristic zero, is parameterized by the set of weighted Dynkin diagram.
Thus, there is no "Kawanaka's conjecture" in this case, until we are able to define a GGGR for each unipotent orbit. For Kawanaka's conjecture in good characteristic case, see \cite[Conjecture 3.3.1]{K2}, \cite[Theorem 2.4.3]{K3} and \cite[Theorem 4.5]{GH}.

Assuming $p,q$ to be large enough, Lusztig gave the block decomposition of the characters of GGGRs, and expressed each block component in terms of characteristic functions of intersection cohomology complexes in this block, see \cite{L1}. The assumption of Lusztig was reduced by Taylor in \cite{T}.
According to \cite{X}, there is only one block of intersection cohomology complexes, when $G$ is of type $B_n$, $C_n$ or $D_n$ and $p=2$. So there is no need for a block decomposition. But it may still be interesting to express the characters of GGGRs in terms of characteristic functions of intersection cohomology complexes.

$\\$

\begin{flushleft}
\textbf{Acknowledgement.} The authors are grateful to Professor Toshiaki Shoji for his
encouragement and helpful suggestions. The second named author would like to thank Professor Meinolf Geck for his detailed and enlightening answers to many questions, and thank Professor Leonard Scott and Professor Daniel Nakano for useful suggestions during ICRT 8.
\end{flushleft}

\end{document}